\newcommand{\comment}[1]{}
\newcommand{\norm}[1]{\left\lVert#1\right\rVert}
\begin{document}

\title{Interpolation of Set-Valued Functions}
% Short title for running heads:
\shorttitle{Interpolation of Set-Valued Functions}

\author{%
{\sc
Nira Dyn\thanks{Email: niradyn@tauex.tau.ac.il},
David Levin\thanks{Email: levindd@gmail.com}
and Qusay Muzaffar\thanks{Email: qusaym@mail.tau.ac.il}}\\[2pt]
School of Mathematical Sciences\\
Tel Aviv University -  Israel
}
% Short list of authors for running heads:
\shortauthorlist{Nira, David and Qusay}

\maketitle

\begin{abstract}
% Body of abstract:
{Given a finite number of samples of a continuous set-valued function F, mapping an interval to compact subsets of the real line, we develop  good approximations of F, which can be computed efficiently.

    In the first stage, we develop an efficient algorithm for computing an interpolant to $F$, inspired by the "metric polynomial interpolation", which is based on the theory in \cite{approximation_of_set_valued_functions}.     
 By this theory, a "metric polynomial interpolant" is a collection of polynomial interpolants to all the “metric chains” of the given samples of $F$. For set-valued functions whose graphs have non-empty interior,  the collection of these "metric chains" can be infinite. Our algorithm computes a small finite subset of "significant metric chains", which is sufficient for approximating $F$.

 For the class of Lipschitz continuous functions with samples at the roots of the Chebyshev polynomials of the first kind, we prove that the error incurred by our computed interpolant  decays with increasing number of interpolation points in the same rate as in the case of interpolation by the metric polynomial interpolant. This is also demonstrated by our numerical examples. 

   For the class of set-valued functions whose graphs have smooth boundaries, we extend our algorithm to achieve a high-precision detection of the points of topology change, followed by a high-order approximation of the boundaries of the graph of F. We further discuss the case of set-valued functions that have "holes" with H\"{o}lder-type singularities. To treat this case we apply some special approximation ideas near the singular points of the "holes", showing by several numerical examples the capability of obtaining high-order approximations.}

% Keywords:
{Approximation Theory; Set-Valued Functions; Interpolation;
Algorithm.}
\end{abstract}

\section{Introduction}
\label{sec;introduction}

In \citep{approximations_of_set_valued_functions} the approximation of set-valued functions mapping $[a,b]$ to compact subsets of $\mathbb{R}^d$ is discussed, and theoretical results, regarding the adaptation of the operator of polynomial interpolation from real-valued functions to set-valued functions, have been established. The main idea of this adaptation is to replace operations between numbers by operations between sets. More precisely, given a finite number of samples of a set-valued function $F$, $\{F(x_{i})\}_{i=0}^{N}$, we find all metric chains (see \ref{eq:metric_chains}) connecting these sample sets. The {\bf metric polynomial interpolant} of the set-valued function $F$ at a point $x$ is then defined as the union of the $\mathbb{R}^d$ values at $x$ of the $\mathbb{R}^d$-valued polynomials interpolating the metric chains.

Another way of viewing the problem is reconstruction of a set  in $\mathbb{R}^{d+1}$ from its parallel cross-sections, which are compact sets in $\mathbb{R}^{d}$. For example, 3D objects reconstruction from their 2D cross-sections is an important problem in geometric modelling, where algorithms have been proposed, e.g. \cite{Bajaj}, \cite{Boissonnant}, \cite{KelsDyn}, \cite{Levin1986}.
    
    In this work, we limit our research to set-valued functions mapping $[a,b]$ to compact subsets of $\mathbb{R}$. Our contribution in Section 2 is an efficient algorithm, which finds a small sub-collection of the collection of all metric chains built on the samples $\{F(x_{i})\}_{i=0}^{N}$, which we term \textit{significant metric chains}. These significant metric chains are sufficient for reconstructing an approximation of the graph of the set-valued function.
    
    We demonstrate the results of our new algorithm on Lipshitz continuous set-valued functions, and choose $\{x_i\}_{i=0}^N$ to be the roots of Chebyshev polynomial of degree $N+1$. We show that the algorithm “reconstruct” the graph of $F$, which is a $2D$ object, from its $1D$ samples with an approximation rate of $O\Big(\frac{\log{N}}{N}\Big)$, as predicted by the theory in \cite{approximations_of_set_valued_functions}.

In Section 3 we modify the theoretical and the algorithmic results, to achieve a better approximation rate. In particular, we obtain a rate of $O(h^{4})$, where $h$ is the maximal distance between adjacent interpolation points. This is done under the additional assumption that the smoothness of the boundaries of the graph of $F$ are $C^{4}$. 

According to our conclusions in Section 2, the maximal error occurs in the vicinity of the points of topology change of $F$ (PCTs). Thus, we suggest a method for high order approximation of the points of change of topology, which results in decreasing the interpolation error. Another factor contributing to the improvement in the decay of the error is due to the use of spline interpolation.
We demonstrate our algorithm using a "not-a-knot" cubic spline interpolation at equally spaced points. By modifying the algorithm of the previous section, we can separate the holes in the graph of $F$ from each other, and use individual spline approximations for the boundaries of each hole. 

In Sections 2 and 3 we dealt with set-valued functions (SVFs) with Lipschitz type holes. In Section 4 we extend
our algorithm to deal with SVFs whose holes have upper and lower boundaries, which are $C^{2k}$ with H\"older type singularities at both PCTs. More specifically, assuming the hole is in the interval $[c,d]$, we consider the case where the first derivative of its boundaries diverges as $x\to c^{+}$ at a rate of $|x-c|^{-\frac{1}{2}}$ and as $x\to d^{-}$ at a rate of $|x-d|^{-\frac{1}{2}}$. 
We further assume the hole is defined as the interior of a closed boundary curve $\Gamma\in C^{2k}$, such that every vertical cross-section at $x\in(c,d)$ cuts the curve at two points.

We develop an algorithm for deriving high order approximations to holes of H\"older type singularity. We remark here that the algorithm suggested in \cite{Levin1986} fails in approximating such holes in the neighborhood of the PCT's. The algorithm suggested here starts with deriving a high order approximation to the location of the singular points, i.e., the PCT's.
Next, this information is used for computing local singular approximations of the upper and lower boundary functions $g$ and $h$ near the PCT's. Afterwards, we subtract  these local approximations in order to regularize the given data of $g$ and $h$. Finally, a spline approximation is applied to the regularized data, and the final approximation is obtained by returning the local singular elements. We present a detailed escription of our algorithm and an error analysis for the approximation of the PCTs.

\section{Preliminaries}
In this section we present definitions, notations and operations relevant to our work:
\subsection{Preliminaries on sets and on set-Valued  functions}\label{pre_set_svf}
In this section we present definitions, notation and operations relevant to our work:
\begin{itemize}
\item The set of all compact non-empty subsets of $\mathbb{R}^{d}$ is denoted by  $K(\mathbb{R}^{d})$.
\item For given two sets $V, W\in K(\mathbb{R}^{d})$, the \textbf{Hausdorff metric}, which measures the distance between $V$ and $W$, is defined as
\begin{equation}
	d_{H}(V,W)=\max{\bigg\{\max_{v\in V}{d(v,W)},\max_{w\in W}{d(w,V)}\bigg\}}, 
\end{equation}
where $d(v,W)=\min_{w\in W}{\big\{|v-w|\big\}}$ and $|\cdot|$ is the Euclidean distance.
\item The set of all metric pairs of two given sets $V, W\in K(\mathbb{R}^{d})$ is
\begin{equation}
	\Pi(V,W)=\bigg\{(v,w)\in V\times W: v\in \Pi_{V}(w) \ \text{or}\  w\in \Pi_{W}(v)\bigg\},
\end{equation}
where $\Pi_{V}(w)=\big\{ v\in V:|v-w|=d(w,V) \big\}$.
\item The collection of \textbf{Metric Chains} of a finite sequence of compact sets $\{V_{i}\in K(\mathbb{R}^{d})\}_{i=0}^{N}$ is
\begin{equation}
    \label{eq:metric_chains}
	MC\bigg(\{V_{i}\}_{i=0}^{N}\bigg)=\bigg\{ 
	(v_{0},...,v_{N}) : (v_{i},v_{i+1})\in \Pi(V_{i},V_{i+1}),\  i=0,\dots,N-1\bigg\}.
\end{equation}
Note that $MC\bigg(\{V_{i}\}_{i=0}^{N}\bigg)$ depends on the order of the sets.
\item A \textbf{Metric Linear Combination} of a finite sequence of compact sets $\big\{V_{i}\in K(\mathbb{R}^{d})\big\}_{i=0}^{N}$ is 
\begin{equation}
\label{eq:mlc}
		\bigoplus_{i=0}^{N}\lambda_{i}V_{i}=\bigg\{
		\sum_{i=0}^{N}{\lambda_{i}v_{i}} : (v_{0},...,v_{N})\in MC\bigg(\{V_{i}\}_{i=0}^{N}\bigg)
	\bigg\},
\end{equation} where $\lambda_{i}\in \mathbb{R},\ 0\leq i\leq N$.
\item A set of points $X=\{x_{0},...,x_{N}\}$ is a partition of the interval $[a,b]$ if\\ $a\leq x_{0} < ... < x_{N}\leq b$. The "norm" of $X$ is $|X|=\max_{i}\{{|x_{i+1}-x_{i}|}\}$ for $0\leq i\leq N-1$.
\item A function $F:[a,b]\to K(\mathbb{R}^{d})$ is called a set-valued function (SVF).
\item A set-valued function $F$ is called H\"older continuous, with respect to the Hausdorff metric, if there exists a constant $\mathcal{C}>0$ such that
\begin{equation}
    d_{H}(F(x),F(y))\leq \mathcal{C}|x-y|^{\alpha},\quad x,y\in [a,b].
\end{equation}
where $\alpha\in(0,1]$. We denote the collection of all H\"older continuous functions on $[a,b]$ with the constants $\mathcal{C}$ and $\alpha$ by $Hol_{\alpha}([a,b];\mathcal{C})$. In the special case $\alpha=1$, $F$ is called Lipschitz continuous function. We denote the collection of all Lipschitz continuous functions on $[a,b]$ with a constant $\mathcal{L}>0$ by $Lip([a,b];\mathcal{L})$.
\item For a set-valued function $F:[a,b]\to K(\mathbb{R}^d)$, we define the graph of $F$ by
\begin{equation}
    Graph(F)=\big\{(x,y):x\in[a,b],\ y\in F(x)\big\}.
\end{equation}
\comment{
\item In \cite{the_metric_integral_of_set_valued_functions}, the linear approximation operators that are adapted from real-valued functions to SVFs, are of the form
\begin{equation}
\label{eq:lo}
A_{X}f(x)=\sum_{i=0}^{N}{a_{i}(x)f(x_{i})},
\end{equation}
where the function $f:[a,b]\to\mathbb{R}$, and $X$ is a partition of $[a,b]$.
\item For $F:[a,b]\to K(\mathbb{R}^{d})$ and $X$ a partition of $[a,b]$, the \textbf{Metric Operator} $A^{M}_{X}$ of $F$ has the form
\begin{equation}
A^{M}_{X}F(x)=\bigoplus_{i=0}^{N}a_{i}(x)F(x_{i}).
\end{equation}}
\item $K^{*}(\mathbb{R})$ is a subspace of $K(\mathbb{R})$. Each value of $F:[a,b]\to K^{*}(\mathbb{R})$ is a union of a finite number of compact intervals. Our method presented in this section applies such $F$.
\item A \textbf{Point of Change of Topology} (PCT) of a set-valued function $F:[a,b]\to K^{*}(\mathbb{R})$ is a point $(x,y)\in Graph(F)$ that for small enough $\epsilon > 0$ there exists $\delta > 0$ such that for each $z\in[x-\delta,x+\delta]/\{x\}$, $F(z)$ and $F(x)$ have different topology, i.e. $|F(x)\cap B_{\epsilon}(y)|\neq|F(z)\cap B_{\epsilon}(y)|$ where $B_{\epsilon}(y)=[y-\epsilon,y+\epsilon]$ and $|\cdot|$ represents the number of the intervals in a given set. 
\item A \textbf{Hole} $H$ of a set-valued function $F:[a,b]\to K^{*}(\mathbb{R})$ is a set of the form
    \begin{equation}
    \label{eq:definition_of_hole}
        H=\big\{(x,y):g(x)<y<h(x),x\in(c,d)\big\}\not\subset Graph(F),
    \end{equation}
    where $g,h:[c,d]\to \mathbb{R}$, $g(c)=h(c)$, $g(d)=h(d)$ and $g(x),h(x)\in F(x)$ for $x\in[c,d]$. We note that the points $(c, g(c))$ and $(d, g(d))$ are PCTs of $F$.

\comment{\item We consider $F:[a,b]\to{K}^{*}(\mathbb{R})$  with $M<\infty$ holes $\{H_{j}\}_{j=1}^{M}$ with $\big\{h_{j},g_{j}\big\}_{j=1}^{M}$ their boundary functions, defined on respective intervals $[c_i,d_i]$, $a<c_i$, $d_i<b$. Let $u,\ell:[a,b]\to \mathbb{R}$ be real-valued functions representing the upper and the lower boundaries of $F$. We further assume that $F(a)$ and $F(b)$ are convex.}

\item For $F:[a,b]\to{K}^{*}(\mathbb{R})$, we define $u,\ell:[a,b]\to \mathbb{R}$ be real-valued functions representing the upper and the lower boundaries of $F$. 

    \item {\bf The class $\mathcal{F}([a,b], M)$}: In this work we consider the class of set-valued functions denoted by $\mathcal{F}([a,b], M)$, where $M\in\mathbb{N}$, with the following properties:

\begin{enumerate}
    \item For $F\in \mathcal{F}([a,b], M)$, $Graph(F)$ has separable $M$ holes $\{H_i\}_{i=1}^M$ (i.e. the closures of the holes are disjoint). 
    
    \item A hole $H_i$ is defined on an interval $[c_i,d_i]\subset(a,b)$, with lower and upper boundary functions $g_i$ and $h_i$. Each hole $H_i$ is simple, namely, it is defined as the interior of a closed boundary curve $\Gamma_i$, such that every vertical cross-section at $x\in(c_i,d_i)$ cuts $\Gamma_i$ at two points.
    
    \item The curves $\{\Gamma_i\}$ do not intersect the upper and the lower boundaries of $Graph(F)$.
    
    \item We further assume that $F(a)$ and $F(b)$ are convex.
    
\end{enumerate}

\item We denote the set of functions $\big\{u,\ell\big\}\bigcup\big\{g_{j}\big\}_{j=1}^{M}\bigcup\big\{h_{j}\big\}_{j=1}^{M}$ by $\partial F$. Note that $\partial F$ consists of all the boundary functions of $Graph(F)$.

\end{itemize}

\subsubsection{Representing the SVF by the boundaries functions}\label{subsub1}
\hfill

\medskip
The SVF $F$ may be defined using all the above boundary functions, as follows:
For $x\in [a,b]$ we identify all the holes' intervals $\{[c_i,d_i]\}_{i\in I(x)} $ containing $x$. If $\#I(x)=J(x)>0$, we order the corresponding boundary values $\{g_i(x)\}$, $i\in I(x)$, in ascending order, and index the relevant holes according to this ordering $\{H_{i_j}\}_{j=1}^{J(x)}.$ The set $F(x)$ may be expressed as
\begin{equation}\label{Fatx0}
F(x)=[\ell(x),g_{i_1}(x)]\cup\bigcup_{j=1}^{J(x)-1}[g_{i_j}(x),h_{i_{j+1}}(x)]\cup[h_{i_J}(x),u(x)].
\end{equation}
If $J(x)=0$,
\begin{equation}\label{Fatx1}
F(x)=[\ell(x),u(x)].
\end{equation}

In this work we consider the approximation of a set-valued function from a finite number of its samples, and we consider three cases specified by the smoothness class of the boundary functions $\big\{u,\ell\big\}\bigcup\big\{g_{j}\big\}_{j=1}^{M}\bigcup\big\{h_{j}\big\}_{j=1}^{M}$.
In Section \ref{sec:computed_svf_interpolant} we consider the case of boundary functions of Lipschitz type. In Section \ref{C4boundaries} we assume the boundary functions are $C^4$. In  Section \ref{Holderboundaries} we deal with SVFs whose holes have upper and lower boundaries of H\"older type with H\"older exponent $\frac{1}{2}$ at both PCTs.

\section{Approximated metric polynomial interpolant} \label{sec:computed_svf_interpolant}
Paper \cite{approximations_of_set_valued_functions} presents a theoretical method for interpolation of Set-valued functions by the \textit{metric polynomial interpolant}. Inspired by the definition of the \textit{metric polynomial interpolant}, we present an efficient algorithm for approximating a set-valued function $F$ from a finite number of its samples.We term the output of our algorithm \textit{approximated metric polynomial interpolant}.

For Lipschitz continuous
$F$ and for Chebyshev interpolation points the approximated metric polynomial interpolant approximates $F$ at the same rate as the \textit{metric polynomial interpolant}.

\subsection{The metric polynomial interpolant}
In this section we present the adaptation of the classical polynomial interpolation operators in Lagrange form to set-valued functions, and present an upper bound of the error in an important special case. 
Recalling that for a real valued function $f\in C[a,b]$ the Lagrange form of the polynomial interpolation operator at a partition $X\subset[a,b]$ is given by 
\begin{equation}\label{Lagrange}
    \mathcal{P}_{X}f(x)=\sum_{i=0}^{N}{l_{i}(x)f(x_{i})},
\end{equation}
 For SVF approximation we use the metric analogues of the polynomial interpolation operator:
\begin{definition}(\cite{approximation_of_set_valued_functions}, Section 7.4.3)
Let $F:[a,b]\to K(\mathbb{R}^{d})$ be a set-valued function and $X\subset[a,b]$ be a partition. Let $\big\{(x_{i},F(x_{i}))\big\}_{i=0}^{N}$ be a data set consisting of the samples of $F$ at $X$. The \textbf{metric polynomial interpolation operator} is given by
\begin{equation}\label{PMX}
    \mathcal{P}^{M}_{X}F(x)=\bigoplus_{i=0}^{N}l_{i}(x)F(x_{i})=\bigg\{
    \sum_{i=0}^{N}l_{i}(x)f_{i}:(f_{0},...,f_{N})\in MC\Big(\big\{F(x_{i})\big\}_{i=0}^{N}\Big)
    \bigg\},
\end{equation}
where $l_{i}(x)$ is defined as in the real-valued case.
\end{definition}

It is shown in \cite{approximation_of_set_valued_functions} that for $F\in Lip([a,b],\mathcal{L})$ the metric polynomial interpolation approximates $F$, in the Hausdorff metric, with approximation rate $O(\log(N)/N)$, where $X$ is the set of N Chebyshev points in $[a,b]$ (the roots of the N-th degree 
Chebyshev polynomial in $[a,b]$.
However, the elegant formula (\ref{PMX}) representing the metric polynomial interpolant is not a practical one, since, for most cases, in particular for $F\in \mathcal{F}([a,b],M)$, the set of metric chains is infinite. We present below an efficient algorithm for computing an SVF approximation for $F\in \mathcal{F}([a,b],M)$, which gives the same approximation rate as the metric polynomial interpolant.

\subsection{The algorithm}
In this section we present our new method for an efficient computation of an interpolant to $F$. We do it by approximating the boundaries of the graph of $F$. First, we introduce and recall notions and notation used in the presentation of our method.
\subsubsection{Notions and notation}
\begin{itemize}
\item $\mathcal{N}$ is a \textbf{Maximal Interval} in a set $A\in K^{*}(\mathbb{R})$, if there is no interval $\mathcal{M}\neq \mathcal{N}$ satisfying $\mathcal{N}\subset\mathcal{M}\subset A$.
\item \textbf{Samples of an SVF:}
Given a set of interpolation points $X=\{x_{i}\}_{i=0}^{N}$, the samples of $F\in K^{*}(\mathbb{R})$ at these points are $\{F(x_{i})\}_{i=0}^{N}$. Each sample has the form
\begin{equation}\label{sample}F(x_{i})=\bigcup_{j=0}^{M_i}I_{i,j},
\end{equation}
where, for $j=0,...,M_i$, $I_{i,j}=[a_{2j}^{[i]},a_{2j+1}^{[i]}]$ are maximal intervals in $F(x_i)$, with
$a_{j}^{[i]}< a_{j+1}^{[i]}$, for $j=0,...,M_i$.

\item The set of {\bf approximated points of change of topology} in $F(x_{i})$, for $0\leq i\leq N$, is defined by
$$APCT(F,x_{i})=\bigg\{ p=\frac{\min{(I_{j,k+1})} +\max{(I_{j,k})}}{2},\  0\le k< M_j,\ {\text s.t.}\ p\in F(x_{i})\ {\text and}\ p\notin F(x_{j}),\  {\text for}\  |j-i|=1\bigg\}$$
\iffalse
$$APCT(F,x_{i})=\bigg\{p\in F(x_{i}):p\notin F(x_{j}),\quad \exists\ \ 0\leq k < M_{j}, \quad p=\frac{\min{(I_{j,k+1})}+\max{(I_{j,k})}}{2}\bigg\},$$
\fi
%where $j=i-1$ or $j=i+1$.
\item {\bf Extended PCT points:} The sets of right and left extended PCT points in $F(x_{i})$, for $0\leq i\leq N$, are defined by
$$EP_{R}(F,x_{i})=\Big\{p\in F(x_i):\exists j<i\  \text{ s.t. } p\in APCT(F,x_{j}) \text{ and } p\in F(x_\ell),\ j<\ell<i\Big\},$$
$$EP_{L}(F,x_{i})=\Big\{p\in F(x_i):\exists j>i\  \text{ s.t. } p\in APCT(F,x_{j}) \text{ and } p\in F(x_\ell),\ i<\ell<j\Big\}.$$
\item \textbf{Discrete Samples}
Given a sample $F(x_i)$ defined by (\ref{sample}), the discrete sample is the set of end points of the intervals,
$$
\zeta \big(F(x_{i})\big)= \big\{a^{[i]}_{0},a^{[i]}_{1}, \ldots, a^{[i]}_{2M_{i}},a^{[i]}_{2M_{i}+1}\big\}.
$$

\iffalse
a set of interpolation points $X=\{x_{i}\}_{i=0}^{N}$ and the samples of $F\in K^{*}(\mathbb{R})$, at these points $S=\{F(x_{i})\}_{i=0}^{N}$, where for $i=0,\ldots,N$,  $F(x_{i})=\bigcup_{j=0}^{M_i}[a^{[i]}_{2j},a^{[i]}_{2j+1}]$, the discrete samples are
$$
    \tilde{S}=\big\{V^{[i]}\big\}_{i=0}^{N} \ \text{with}\  V^{[i]}=\bigg \{ \big\{a^{[i]}_{0},a^{[i]}_{1}\big\}, \ldots, \big\{a^{[i]}_{2M_{i}},a^{[i]}_{2M_{i}+1}\big\} \bigg \}
$$
Moreover, given a sample $F(x_{i})$, $0\leq i\leq N$, we denote the corresponding discrete sample by $\zeta \big(F(x_{i})\big)$. 
\fi
Note that $\zeta \big(F(x_{i})\big)$ consists of boundary points of $Graph(F)$.
\item The set of \textbf{Significant Metric Chains} of a given finite set of samples $\big\{F(x_{i})\big\}_{i=0}^{N}$, is a subset of $MC_{F,X}=MC\big(\{F(x_{i})\}_{i=0}^{N}\big)$ given by:
\begin{equation}
\begin{aligned}
    SMC\bigg(\big\{F(x_{i})\big\}_{i=0}^{N}\bigg)=\bigg\{ 
	\big(f_{0},...,f_{N}\big)\in MC_{F,X}:\forall i,\ 0\leq i\leq N-1, (f_{i},f_{i+1})\in \Pi\big(T_{i},T_{i+1}\big)\bigg\},
\end{aligned}
\end{equation}
where $T_{i}=\zeta\big(F(x_{i})\big)\cup APCT(F,x_{i})\cup EP_{r}(F,x_{i})\cup EP_{l}(F,x_{i}),\quad 0\leq i\leq N$.
\iffalse
In words, a significant metric chain is a metric chain with each one of its pairs being a metric pair among the points on the boundary of the corresponding samples, union with the approximated PCT points and the extended PCT points. 
\fi
We denote the set of the polynomials that interpolate the set of significant metric chains by $P_{SMC(\{F(x_{i})\}_{i=0}^{N})}$.
\end{itemize}

\subsubsection{A description of the algorithm for the approximation of $F\in \mathcal{F}([a,b],M)$}

%We present in details the steps of our algorithm.
\begin{enumerate}
    \item \textbf{Create the discrete samples}:
    
    We substitute each one of the given samples by a discrete sample. The discrete sample is obtained from the give sample by replacing each interval by its two boundary values.
    \item \textbf{Find all significant metric chains}:
    This step is done by utilizing a tree data structure. Initialy the $i^{\text{th}}$ layer of the tree consists of $2M_{i}+2$ nodes, each containing a point of the $i^{\text{th}}$ discrete sample. At the end of this step each path in the tree represents a significant metric chain. 
    
   First, the algorithm identifies all {\bf APCTs}. Each identified APCT is added as a new node to the corresponding layer, and connected to the two nodes corresponding to its metric pair. Next,  the {\bf extended PCT points} are added to the corresponding layer.
    Then the algorithm scans all the layers of the tree and connects all metric pairs between two consecutive layers.%(defined above),
     
    \item \textbf{Compute the real-valued interpolants of the significant metric chains}:
    
    By using a known algorithm for computing real-valued polynomial interpolation, the algorithm computes a set of polynomial interpolants. Each one of these polynomials interpolates the values of one of the significant metric chains at the points $X$.

    %\item
    %{\bf Extracting the approximations to the boundaries of the holes in $Graph(F)$:}
    %There \are in $Graph(F)$.
    
    \item
    {\bf Extracting the approximations to the boundaries of $F$:}
    Due to the structure of the significant metric chains, 
    the upper and lower boundaries, $u$, $\ell$, of $Graph(F)$ are interpolated each by one of the  interpolation polynomials, computed in the previous step. Also, the boundary of
    each of the M holes in $Graph(F)$ is approximated by at least one pair of interpolants, one approximating its upper boundary and one its lower boundary. Both polynomials interpolate the  two APCTs of that
hole.

The algorithm extracts these approximations to the boundaries of $Graph(F)$ by relating each significant metric chain either to $u$ or to $\ell$ or to an upper or lower boundary of the $M$ holes.
In case there are more than one pair of interpolating polynomials of a boundary of a hole, an arbitrary choice of one pair is taken as the approximant of that boundary. The latter case is nongeneric.
    
    \item{\bf Construct the approximation $\tilde F(x)$:}
    Using the approximations to the boundaries of $Graph(F)$, we apply the procedure in Section \ref{subsub1} for defining a set-valued function using its boundary functions.
    \end{enumerate}
    
    \medskip
    {\bf Conclusion: The interpolation property}
    
    Since the approximated boundary functions interpolate the exact boundary functions at the sample points, it follows that $\tilde F(x_i)=F(x_i)$, $x_i\in X$.
    
    \iffalse
    \item \textbf{Construct the data structure of the set-valued interpolant $\Tilde{F}$}:
    
    For each interval between the locations of two consecutive samples, we find all vertical sections. Then, we add each one of these intervals together with the corresponding set of vertical sections to the data structure.
    \item \textbf{Compute the set $\Tilde{F}(x)$ for a given $x\in[a,b]$}:
    
    Given a value $x$, we first identify which one of the intervals in the data structure contains it. Then, we return a set $\Tilde{F}(x)=\bigcup_{0\leq j\leq M
    _{i}}VS_{j}(x)$, where $\big\{VS_{j}\big\}_{j=0}^{M_{i}}$ is the set of all vertical sections corresponding to the same interval containing $x$.\bigskip
\end{enumerate}
\fi
\iffalse
See the diagram in Figure \ref{fig:algorithm_diagram}. See also Appendix B for a detailed description of our algorithm as pseudo-code.

To verify that $\tilde{F}$ interpolates $F$ at the points of $X$, we recall the definition of the significant metric chains and the definition of vertical sections, and conclude that 
$$
F(x)=\tilde{F}(x)\quad\text{ for } x\in X.
$$
\fi
\subsection{Error analysis}
By the theory in \cite{approximation_of_set_valued_functions}, a Lipschitz continuous set-valued function $F$ is approximated with $O\left(\log(N)/N\right)$ approximation rate in the Hausdorff netric,
 by the \textit{metric polynomial interpolant}, 
 interpolating its values at the $N+1$ Chebyshev points (the roots of the Chebyshev polynomial of degree $N+1$). The metric polynomial interpolant is the set of polynomials which interpolate all the metric chains defined by the values of $F$ at the $N+1$ Chebyshev points.
 
Based on the values of $F$ at the $N+1$ Chebyshev points, our algorithm generates an interpolant $\tilde{F}$, interpolating $F$ at the Chebyshev points. Here we show that $\tilde{F}$ approximates $F$ with error (in the Hausdorff metric) of order $O(\log(N)/N)$ as $N\to\infty$, using only a small subset of metric chains (significant metric chains).

Before stating this result as a theorem, we state three results
which are needed in the proof  of the theorem.
The first result is
Theorem 9.3.4 from \cite{approximation_of_set_valued_functions}.   
\begin {theorem}
\label{Th:9.3.4}
If $F\in Lip([a,b];\mathcal{L})$, then for any $f\in\partial F$
\begin{equation}
\label{th:lip_svf}
f\in Lip(D_{f},\mathcal{L}),
\end{equation}
with $D_{f}$ the domain of definition of $f$.
\end{theorem}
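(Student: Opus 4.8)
The plan is to reduce the statement to a purely local one-dimensional estimate driven by the Hausdorff bound $d_H(F(x),F(y))\le \mathcal{L}|x-y|$. Since each $D_f$ is an interval and the Lipschitz property is local-to-global on an interval (subdivide $[x,y]$ into pieces shorter than any fixed threshold and sum the increments, which reproduces the constant $\mathcal{L}$ exactly), it suffices to prove $|f(x)-f(y)|\le \mathcal{L}|x-y|$ for $|x-y|$ smaller than a threshold $\rho$ that may depend on $f$. Throughout write $\delta:=\mathcal{L}|x-y|$, so every point of $F(x)$ lies within $\delta$ of $F(y)$ and vice versa.

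For the outer boundaries $u,\ell$ the estimate is immediate and uses only that $u(x)\in F(x)$ while $u(y)=\max F(y)$ dominates every point of $F(y)$. Assuming $u(x)\ge u(y)$, choose $q\in F(y)$ realizing $d(u(x),F(y))\le\delta$; since $q\le u(y)\le u(x)$ we get $u(x)-u(y)\le u(x)-q=|u(x)-q|\le\delta$, and the lower boundary $\ell=\min F$ is handled symmetrically from below. The hole boundaries $g_j,h_j$ are the crux, because $g_j(y)$ is not the maximum of $F(y)$, so the nearest point $q\in F(y)$ to $g_j(x)$ could a priori sit on the far side of the hole. Here I would invoke the structure of the class $\mathcal{F}([a,b],M)$: the holes are simple and separable and do not meet the outer boundary, so on any compact subinterval of $(c_j,d_j)$ the width $h_j(x)-g_j(x)$ is bounded below by some $w_0>0$, and for $|x-y|<\rho$ hole $j$ persists with a matching lower interval. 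Taking $\delta<w_0/2$, the nearest point $q\in F(y)$ to $g_j(x)$ cannot lie above the hole, since that would force $q\ge h_j(y)>g_j(x)+\delta\ge q$; hence $q<h_j(y)$, and as the open interval $(g_j(y),h_j(y))$ contains no points of $F(y)$ we conclude $q\le g_j(y)$. The two-line estimate from the $u$ case, applied in both directions, then yields $|g_j(x)-g_j(y)|\le\delta$, and $h_j$ is treated symmetrically from above.

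The main obstacle is the behaviour near the PCTs $c_j,d_j$, where $g_j$ and $h_j$ coalesce ($g_j(c_j)=h_j(c_j)$), the lower width bound $w_0$ degenerates to $0$, and the ``which side of the hole'' argument above no longer isolates $q$. I would close this gap using the continuity of $g_j$ and $h_j$ up to the PCTs together with the estimate already established on the interior, passing to the limit so that the constant $\mathcal{L}$ is inherited at the endpoints: as the gap pinches, the wrong-side candidate point $q$ differs from the correct one by at most the vanishing gap width, so no extra contribution to the bound survives. Making this limiting step rigorous—controlling $g_j$ and $h_j$ \emph{simultaneously} as the hole closes, so that the increments of each boundary function separately stay within $\mathcal{L}|x-y|$—is where the real work lies, and it is the step I expect to require the most care.
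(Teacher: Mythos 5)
The paper does not prove this statement at all: it is quoted verbatim as Theorem~9.3.4 of the cited monograph of Dyn, Farkhi and Mokhov, so there is no in-paper proof to compare against, and your proposal is a genuine from-scratch argument. On its merits it is essentially correct. The outer-boundary case is exactly right. For a hole boundary $g_j$ at interior points, your separation argument is sound once one adds that continuity of $g_j$ on a compact subinterval of $(c_j,d_j)$ (which follows from the closed-curve structure of the holes in $\mathcal{F}([a,b],M)$, and should be said explicitly) lets you choose $\rho$ so that $|g_j(x)-g_j(y)|<w_0/2$, which is what makes the inequality $h_j(y)\ge g_j(y)+w_0>g_j(x)+\delta$ go through; without that remark the step looks circular. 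The part you flag as ``where the real work lies'' is in fact already closed by your own outline and needs no simultaneous control of $g_j$ and $h_j$: the width $h_j-g_j$ is strictly positive on the open interval $(c_j,d_j)$ and vanishes only at the endpoints, so for any $x,y\in(c_j,d_j)$ the compact interval $[x,y]$ admits a uniform $w_0>0$ and the chaining gives $|g_j(x)-g_j(y)|\le\mathcal{L}|x-y|$ throughout the open interval; then letting $y\to c_j^{+}$ and using one-sided continuity of $g_j$ at $c_j$ transfers the same constant $\mathcal{L}$ to the endpoint. The ``wrong-side candidate differs by the vanishing gap width'' picture is unnecessary. What your approach buys is a self-contained, elementary proof for the class $\mathcal{F}([a,b],M)$ actually used in this paper; what it does not deliver is the theorem in the generality of the cited source, since you lean on the specific structural assumptions (simple, separable holes bounded by closed curves) rather than on general properties of Lipschitz SVFs.
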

\begin{lemma}\label{lemma:Cheb}
Let $f\in Lip([a,b],L)$, and let $X=\{x_i\}_{i=0}^N$ be the $N+1$ Chebyshev points in $[a,b]$. Consider the polynomial $p_N\in\Pi_N$ interpolating a perturbed data $f^*(x_i)=f(x_i)+e_i$ where $|e_i|\le \epsilon$, $0\le i\le N$, then
\begin{equation}
\|f-p_N\|_\infty\le C_1\frac{\log(N)}{N}+C_2\epsilon\log(N).
\end{equation}    
\end{lemma}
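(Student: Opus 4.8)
The plan is to split the error into an interpolation part and a perturbation part by introducing the polynomial $P_N f\in\Pi_N$ that interpolates the \emph{exact} data $\{f(x_i)\}_{i=0}^N$ at the Chebyshev points, and then writing
\begin{equation}
\|f-p_N\|_\infty\le \|f-P_Nf\|_\infty+\|P_Nf-p_N\|_\infty .
\end{equation}
The two summands will be controlled separately, and in both the Lebesgue constant $\Lambda_N$ of Chebyshev interpolation will be the governing quantity. The key classical fact I would invoke is that for the $N+1$ Chebyshev points one has $\Lambda_N=\big\|\sum_{i=0}^N|\ell_i|\big\|_\infty\le C\log N$, where $\{\ell_i\}$ are the Lagrange basis polynomials.

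For the first term I would use the standard near-best-approximation estimate $\|f-P_Nf\|_\infty\le(1+\Lambda_N)\,E_N(f)$, where $E_N(f)$ is the error of best uniform approximation of $f$ by polynomials of degree at most $N$. Since $f\in Lip([a,b],L)$, Jackson's theorem gives $E_N(f)\le C L(b-a)/N$. Combining these yields
\begin{equation}
\|f-P_Nf\|_\infty\le (1+C\log N)\frac{CL(b-a)}{N}\le C_1\frac{\log N}{N},
\end{equation}
which is precisely the first claimed term and recovers the rate of the metric polynomial interpolant.

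For the second term I would exploit the linearity of the interpolation operator: since both $p_N$ and $P_Nf$ interpolate data at the same nodes, their difference interpolates the perturbations $e_i$, so $p_N-P_Nf=\sum_{i=0}^N e_i\,\ell_i$. Using $|e_i|\le\epsilon$ gives
\begin{equation}
\|P_Nf-p_N\|_\infty=\Big\|\sum_{i=0}^N e_i\ell_i\Big\|_\infty\le \epsilon\,\Big\|\sum_{i=0}^N|\ell_i|\Big\|_\infty=\epsilon\,\Lambda_N\le C_2\,\epsilon\log N.
\end{equation}
Substituting both bounds into the triangle inequality delivers the stated estimate. The main technical point to pin down carefully is the $O(\log N)$ bound on the Lebesgue constant for Chebyshev points, as it is the single ingredient responsible for both logarithmic factors; everything else reduces to the triangle inequality, linearity, and an invocation of Jackson's theorem. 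I do not anticipate a genuine obstacle, only the need to cite the Lebesgue-constant estimate and Jackson's theorem from standard approximation theory and to absorb the constants $1+C\log N$ and the $(b-a)$ factor into $C_1,C_2$.
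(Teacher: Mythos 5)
Your proposal is correct and follows essentially the same route as the paper: both introduce the interpolant of the exact data, split by the triangle inequality, and bound the perturbation term by $\epsilon$ times the $O(\log N)$ Lebesgue constant of Chebyshev interpolation. The only cosmetic difference is that you derive the $C_1\log(N)/N$ term explicitly from the near-best-approximation bound plus Jackson's theorem, whereas the paper cites this estimate directly from the literature.
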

\begin{proof}
By the theory of polynomial interpolation to Lipschitz continuous real-valued functions at Chebyshev points \cite{BL}, and since $f\in Lip([a,b],L)$, it follows that if $q_N$ interpolates the values $\{f(x_i)\}_{i=0}^N$, then
$\|f-q_N\|_\infty\le C_1\log(N)/N$ for $N$ large.
The polynomial $p_N$ interpolates values which are $\epsilon$ perturbation of the values of $h$. 
 Considering the Lagrange form of the interpolation operator in (\ref{Lagrange}),
it follows that
\begin{equation}
q_N(x)-p_N(x)=\sum_{i=0}^{N}{l_{i}(x)(q_N(x_{i})-p_N(x_i))},\ \ x\in [a,b].
\end{equation}
By \cite{BL}, the Lebesgue constant of the interpolation operator at Chebyshev points is $O(log(N))$ as $N\to \infty$, namely, $\sum_{i=0}^{N}|l_{i}(x)|\le Clog(N)$.
Therefore, it follows that 
\begin{equation}
|q_N(x)-p_N(x)|\le C_2\epsilon\log(N),\ \ x\in [a,b].
\end{equation}
Finally,
\begin{equation}
|f(x)-p_N(x)|\le |f(x)-q_N(x)|+|q_N(x)-p_N(x)|\le C_1\frac{\log(N)}{N}+C_2\epsilon\log(N).
\end{equation}

\end{proof}

The third result is a general lemma on sets which is proved in Appendix A.
\begin {lemma}
\label{Lemma:sets}
Let $A_1, A_2, B_1,B_2$ be subsets of $\mathbb {R}^d$. Then
$$ d_H\big(A_1\cup A_2,B_1\cup B_2\big)\le \max\big\{d_H(A_1,B_1),d_H(A_2,B_2)\big\}.$$
\end{lemma}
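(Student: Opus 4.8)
The plan is to reduce the statement to the two one-sided (directed) distance bounds that together make up the Hausdorff metric. Writing $M=\max\{d_H(A_1,B_1),d_H(A_2,B_2)\}$, it suffices to show both
\begin{equation}
\max_{a\in A_1\cup A_2} d(a,B_1\cup B_2)\le M
\qquad\text{and}\qquad
\max_{b\in B_1\cup B_2} d(b,A_1\cup A_2)\le M,
\end{equation}
since $d_H(A_1\cup A_2,B_1\cup B_2)$ is the maximum of these two quantities by definition.

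The one observation that does all the work is the monotonicity of the point-to-set distance in its second argument: enlarging the target set can only decrease the distance. Concretely, for any $x\in\mathbb{R}^d$ we have $d(x,B_1\cup B_2)=\min_{b\in B_1\cup B_2}|x-b|\le \min_{b\in B_1}|x-b|=d(x,B_1)$, and likewise $d(x,B_1\cup B_2)\le d(x,B_2)$. First I would fix an arbitrary $a\in A_1\cup A_2$ and split on which piece it belongs to. If $a\in A_1$, then by monotonicity $d(a,B_1\cup B_2)\le d(a,B_1)$, and since $d(a,B_1)\le \max_{a'\in A_1}d(a',B_1)\le d_H(A_1,B_1)\le M$, we conclude $d(a,B_1\cup B_2)\le M$. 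The case $a\in A_2$ is identical with the roles of the index $1$ replaced by $2$. Taking the maximum over $a$ establishes the first displayed inequality.

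The second displayed inequality follows by the symmetric argument, exchanging the roles of the $A$'s and $B$'s: any $b\in B_1\cup B_2$ lies in some $B_k$, and $d(b,A_1\cup A_2)\le d(b,A_k)\le d_H(A_k,B_k)\le M$. Combining the two bounds gives $d_H(A_1\cup A_2,B_1\cup B_2)\le M$, which is the claim.

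I do not expect a genuine obstacle here, as the result is elementary; the only point requiring care is to perform the case split cleanly and to invoke monotonicity in the correct direction, namely bounding the distance to the union $B_1\cup B_2$ by the distance to the single matching piece $B_k$ rather than to the other one. (If one wishes to allow non-compact subsets, the $\max$ and $\min$ in the definition of $d_H$ should be read as $\sup$ and $\inf$, but the argument is unchanged.)
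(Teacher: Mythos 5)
Your proposal is correct and follows essentially the same route as the paper's proof in Appendix A: both reduce to the two directed distances, use the monotonicity $d(x,B_1\cup B_2)\le d(x,B_k)$, and split the supremum over $A_1\cup A_2$ according to which piece the point lies in. No gaps; the argument is complete.
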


Equipped with these results we turn to the main theorem of this section:

\begin{theorem}\label{Thm1}
Let $F\in Lip([a,b],L)\cap \mathcal{F}([a,b],M)$, and let $\tilde{F}_N$ be the output of our algorithm from input consisting of samples of $F$ at the $N+1$ Chebyshev points. Then
for $x\in[a,b]$,
\begin{equation}
\label{eq:error_a}
    d_{H}(\tilde{F}_N(x),F(x))=O\Big(\frac{\log{N}}{N}\Big) \quad 
\textrm{as}\ \  N\to\infty\ .
\end{equation}

\end{theorem}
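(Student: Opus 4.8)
The plan is to reduce the Hausdorff distance between $\tilde F_N(x)$ and $F(x)$ to the uniform errors of the polynomial approximants of the boundary functions in $\partial F$, and then to control each such error by Lemma~\ref{lemma:Cheb}. First I would fix $x\in[a,b]$ and write $F(x)$ in the interval form (\ref{Fatx0})--(\ref{Fatx1}), whose endpoints are the values at $x$ of $\ell,u$ and of the active hole boundaries $g_{i},h_{i}$. By construction, $\tilde F_N(x)$ has the same interval structure with each boundary $f$ replaced by its interpolating polynomial $p_f$ extracted in Step~4. When the two sets have the same number of maximal intervals, I would pair them by their defining boundaries and apply Lemma~\ref{Lemma:sets} inductively over the $J(x)+1$ pieces; since the Hausdorff distance between two intervals equals the larger of the two matched-endpoint distances, this gives
\begin{equation*}
d_H\big(\tilde F_N(x),F(x)\big)\le\max_{f\in\partial F}\|f-p_f\|_\infty .
\end{equation*}

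To bound each $\|f-p_f\|_\infty$ I would view the significant metric chain defining $p_f$ as the sampling of a single real-valued function on $[a,b]$: away from the PCTs it follows one boundary component of $Graph(F)$, and at each PCT it is glued, through the $APCT$ and extended points, to the adjacent component. Since every boundary component is Lipschitz with constant $L$ by Theorem~\ref{Th:9.3.4} and $F$ is continuous (so the components meet continuously at the PCTs), this glued function $f$ is Lipschitz on all of $[a,b]$, and on the domain of any one boundary it coincides with that boundary. The values interpolated by $p_f$ equal $f(x_j)$ exactly except at the finitely many $APCT$/extended-point locations near the PCTs, where they are perturbed by some $e_j$; these are precisely the perturbations $e_i$ of Lemma~\ref{lemma:Cheb}.

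It remains to bound $\epsilon=\max_j|e_j|$. A perturbed datum anchoring a chain near a left PCT $(c_i,y^\ast)$, with $y^\ast=g_i(c_i)=h_i(c_i)$, is the gap midpoint $p=\tfrac12\big(g_i(x_j)+h_i(x_j)\big)$ at the Chebyshev point $x_j$ nearest $c_i$ inside $(c_i,d_i)$, and
\begin{equation*}
|p-y^\ast|\le\tfrac12\big(|g_i(x_j)-y^\ast|+|h_i(x_j)-y^\ast|\big)\le L\,|x_j-c_i| .
\end{equation*}
As $c_i\in(a,b)$ is interior, the local Chebyshev spacing is $O(1/N)$, so $|x_j-c_i|=O(1/N)$ and hence $\epsilon=O(1/N)$. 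Lemma~\ref{lemma:Cheb} then yields $\|f-p_f\|_\infty\le C_1\log N/N+C_2\,O(1/N)\log N=O(\log N/N)$ for every $f\in\partial F$, and together with the first paragraph this proves (\ref{eq:error_a}).

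The step I expect to be the main obstacle is the topology mismatch near the PCTs, where the interval pairing of the first paragraph breaks down: for $x$ in a small neighbourhood of a PCT, $F(x)$ and $\tilde F_N(x)$ may have a different number of maximal intervals, so Lemma~\ref{Lemma:sets} no longer applies with a clean one-to-one pairing. I would treat this case separately by observing that a mismatch can persist only while the hole width $h_i(x)-g_i(x)\le 2L|x-c_i|$ is of order $O(\log N/N)$, i.e. for $x$ within $O(\log N/N)$ of the PCT, so that the spurious or missing interval has length $O(\log N/N)$ and contributes only $O(\log N/N)$ to the Hausdorff distance. Establishing rigorously that the interpolants cannot open or close a hole farther than $O(\log N/N)$ from the true PCT — which relies on the $O(1/N)$ accuracy of the $APCT$ location together with the uniform $O(\log N/N)$ bound of the previous paragraphs — is the delicate point, and is where the argument must depart from the generic, topology-preserving case.
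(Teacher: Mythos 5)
Your proposal follows essentially the same route as the paper: reduce $d_H(\tilde F_N(x),F(x))$ to the uniform errors of the boundary interpolants via Lemma~\ref{Lemma:sets}, view each significant metric chain as samples of a Lipschitz extension of a boundary function, perturbed only at the APCT/extended points by $\epsilon=O(1/N)$, and invoke Lemma~\ref{lemma:Cheb} to get $O(\log N/N)$ per boundary. The one place you diverge is the topology mismatch near the PCTs, which you single out as the delicate unresolved step; the paper dissolves this issue rather than confronting it. It extends each hole boundary by constant values to a common interval $[\tilde c,\tilde d]=[x_{n-1},x_{m+1}]$, obtaining $g^*,h^*$, and observes that $F(x)=[\ell(x),g^*(x)]\cup[h^*(x),u(x)]$ remains a valid identity on all of $[\tilde c,\tilde d]$ --- where the hole is absent the two intervals simply abut --- so $F$ and $\tilde F_N$ always admit decompositions with the same number of pieces and Lemma~\ref{Lemma:sets} applies with a clean one-to-one pairing everywhere. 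Your fallback argument can be made to work, but in a simpler form than you state: you do not need the claim that the interpolants cannot open or close a hole farther than $O(\log N/N)$ from the true PCT (the Lipschitz bound $h_i(x)-g_i(x)\le 2L|x-c_i|$ is only an upper bound on the width, so it does not confine the mismatch region to an $O(\log N/N)$ neighbourhood of the PCT, and the hole may be thin far from its endpoints). It suffices to note that whenever exactly one of $F(x)$, $\tilde F_N(x)$ has a gap at a given height, the uniform $O(\log N/N)$ closeness of the corresponding pair of boundary approximants already forces that gap to have width $O(\log N/N)$, which bounds its Hausdorff contribution directly.
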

% K\tilde has a connected graph  and F(a) and F(b) consist of one interval
\begin{proof}
 To study the approximations of the boundary functions of the holes, let us first consider the case of a single hole, $H$. We assume the hole is spanned in the interval $[c,d]$, and we let $h$ and $g$ be the functions
 defined on $[c,d]$ which describe the upper and the lower boundaries of the hole respectively. The point $(c,h(c))=(c,(g(c))$ and the point $(d,h(d))=(d,g(d))$ are the left and right points of change of topology (PCTs) in the graph of $F$.
 We further simplify the geometry assuming that
 \begin{equation}\label{assumption3}
 h(c),h(d)\in (\max_{x\in [a,b]}\ell(x),\min_{x\in [a,b]}u(x)).
 \end{equation}

By Theorem~\ref{Th:9.3.4} $h,g\in Lip([c,d],\mathcal{L})$ and $u,\ell\in Lip([a,b],\mathcal{L})$. Our algorithm constructs interpolating polynomials at Chebyshev points in $[a,b]$, which requires us to extend $h$ and $g$ to the whole interval. We extend both $h$ and $g$ by the constant $h(c)=g(c)$ on $[a,c]$ and by the constant $h(d)=g(d)$ on $[d,b]$. We denote the extended functions by $h^*$ and $g^*$, and it is easy to verify that these functions are in $Lip([a,b],\mathcal{L})$, since $F\in Lip([a,b],L)$.

Let $X=\{x_i\}_{i=0}^N$ be the $N+1$ Chebyshev points in $[a,b]$, and let $\bar X=\{x_i\}_{i=n}^m=X\cap [c,d]$. Given the set-valued data $\{F(x_i)\}_{i=0}^N$, our algorithm identifies the points $\bar X$, and  extracts the values of the functions $h$ and $g$ at these points, 
$$\{h(x_i),\ g(x_i)\},\ \ \  n\le i \le m.$$ 
The values $h(c)=g(c)$ and $h(d)=g(d)$ are not given, and the algorithm approximates the left PCT, $(c,h(c))$, by the point 
$(x_{n-1},(g(x_{n})+h(x_{n}))/2)$, since $(g(x_{n})+h(x_{n}))/2$ is a metric pair with both $g(x_n)$ and $ h(x_n)$. Similarly the right PCT of the hole, $(d,h(d))$, is approximated by the point 
$(x_{m+1},(g(x_{m})+h(x_{m}))/2) .$ 
%The approximation of these values is obtained by looking for relevant metric pairs, as follows:
%The point in $F(x_{n-1})$ which is the closest to both $(x,g(x_{n}))$ and $(x_{n},h(x_{n}))$ is $(x_{n-1},(g(x_{n})+h(x_{n}))/2)$. On the right side of the hole, the  point in $F(x_{m+1})$ which is the closest to both $(x_{m},g(x_{m}))$ and $(x_{m},h(x_{m}))$ is $(x_{m+1},(g(x_{m})+h(x_{m}))/2)$. With the notation
%\begin{equation}
%\label{eq:hl_and_hr}
 %  h_\ell=g_\ell=(g(x_{n})+h(x_{n}))/2, \qquad
 %  h_r=g_r=(g(x_{m})+h(x_{m}))/2,
%\end{equation}
The algorithm constructs two significant metric chains on $X$,
$$\{h_\ell\}_{i=0}^{n-1},h(x_{n}),h(x_{n+1})\ldots h(x_m),\{h_r\}_{i=m+1}^N ,$$
and
$$\{g_\ell\}_{i=0}^{n-1},g(x_{n}),g(x_{n+1})\ldots
g(x_m),\{g_r\}_{i=m+1}^N, $$ where
\begin{equation}
\label{eq:hl_and_hr}
   h_\ell=g_\ell=(g(x_{n})+h(x_{n}))/2, \qquad
   h_r=g_r=(g(x_{m})+h(x_{m}))/2.
\end{equation}

 %he metric chain from $(x_{n-1},h_\ell)$ continues to the left %with the constant value $h_\ell$, and the metric chain to the %right of $(x_{m+1},h_r)$ continues with the constant value $h_r$ (see Figure 13).

Next, we analyze the approximation of $h^*$ by our algorithm. The result  also applies for the approximation of $g^*$.

Our algorithm computes the polynomial $p_N\in \Pi_N$, which interpolates the data $\{(x_i,h_\ell)\}_{i=0}^{n-1}$, $\{(x_i,h(x_i))\}_{i=n}^m$, $\{(x_i,h_r)\}_{i=m+1}^{N}$. In order to prove the theorem, we show that
\begin{equation}\label{OlogNoN}
\|h^*-p_N\|_\infty=O\bigg(\frac{\log{N}}{N}\bigg), \ \ \ \ as\ \  N \to \infty.
\end{equation}

Since  both $g$ and $h$ are in $Lip([c,d],\mathcal{L})$,  
since $h(c)=g(c)$, and since the maximal distance between two adjacent points in $X$ is bounded by $\frac{\pi}{N}$, it follows that
\begin{equation}
|h^*(c)-h_\ell|=|h(c)-\frac{g(x_{n})+h(x_{n})}{2}|=\frac{1}{2}|(g(c)-g(x_{n})+(h(c)-h(x_{n}))|.
\end{equation}
Thus, 
\begin{equation}\label{LpiN}
|h^*(c)-h_\ell|\le \mathcal{L}|c-x_{n}|\le \mathcal{L}|x_{n-1}-x_{n}|\le \mathcal{L}\frac{\pi}{N}.
\end{equation}
Using Lemma \ref{lemma:Cheb} for $f=h^*$ and $\epsilon \le\mathcal{L}\frac{\pi}{N} $ it follows that
\begin{equation}\label{hminusp}
\|h^*-p_N\|_\infty\le C\frac{\log(N)}{N}.
\end{equation}

We denote $\tilde{h}^*_N\equiv p_N$, and analogously, we define $\tilde{g}^*_N$ to be the $N$-th degree polynomial interpolation to $g^*$ at the points $X$. By the arguments leading to
\eqref{hminusp} we conclude that $\|g^*-\tilde g^*_N\|\le C\frac{\log N}{N}$. 

\medskip
Let us consider the restriction of both $g^*$ and $\tilde g_N^*$ to the interval $[\tilde c, \tilde d]\equiv [x_{n-1},x_{m+1}]$, denoting the restrictions as $g_e$ and $\tilde g_{e}$. Note that $g_e$ is an extension of $g$ to $[\tilde c, \tilde d]$ on which the approximated hole is defined. In the same manner we define $h_e$ and $\tilde h_{e}$. The functions $\tilde g_{e}$ and $\tilde h_{e}$ define the lower and the upper boundaries of the approximated hole $\tilde H$, approximating $H$. Moreover, the approximation $\tilde F\sim F$ is defined by the approximated boundary functions using the procedure in Section \ref{subsub1}.

We also use here $\tilde \ell\equiv \tilde{\ell}_N\text{ and }\tilde u\equiv\tilde{u}_N$, the interpolants to $\ell\text{ and }u$, with approximation error of order $O\Big(\frac{\log N}{N}\Big)$ (using \cite{BL}).

For $x\in [a,b]\setminus [\tilde c, \tilde d]$, $F(x)=[\ell(x),u(x)]$ while $\tilde F(x)=[\tilde \ell(x),\tilde u(x)]$,
 and it follows that
$$
d_{H}(\tilde{F}(x),F(x))=O\Big(\frac{\log{N}}{N}\Big),\  
\text{ as } N\to\infty\ .
$$
For $x\in [\tilde c,\tilde d]$
$$
F(x)=\Big[\ell(x),g^*(x)\Big]\cup\Big[h^*(x),u(x)\Big],
$$
whereas
$$
\tilde{F}_N(x)=\Big[\tilde{\ell}(x),\tilde{g}_{e}(x)\Big]\cup\Big[\tilde{h}_{e}(x),\tilde{u}(x)\Big].
$$
Observing that the end-points of $I_1=\big[\ell(x),g^*(x)\big]$ are approximated by the corresponding end-points of $\tilde{I}_1=\big[\tilde{\ell}(x),\tilde{g}_{e}(x)\big]=\big[\tilde{\ell}_N(x),\tilde{g}^*_N(x)\big]$ with error of order $O\Big(\frac{\log N}{N}\Big)$, it is easy to conclude that 

$$
d_H(I_1,\tilde{I}_1)=O\bigg(\frac{\log N}{N}\bigg).
$$

Similarly, for $I_2=\big[h^*(x),u(x)\big]$ and $\tilde{I}_2=\Big[\tilde{h}_{e}(x),\tilde{u}(x)\Big]$, we get $d_H(I_2,\tilde{I}_2)=O\Big(\frac{\log N}{N}\Big)$.
Then, by Lemma~ \ref{Lemma:sets}, we conclude that 
$$d_H\big(F(x),\tilde{F}(x)\big)=O\bigg(\frac{\log{N}}{N}\bigg)\ \ \text{as}\ N\to \infty.$$

\iffalse
\medskip
Let us now prove the claim of the theorem, namely that for all $x\in[a,b]$,  $$
d_{H}(\tilde{F}(x),F(x))=O\Big(\frac{\log{N}}{N}\Big) 
\text{ as } N\to\infty\ .
$$
 Note that 
$$
F(x)=\Big[\ell(x),g^*(x)\Big]\cup\Big[h^*(x),u(x)\Big],
$$
whereas
$$
\tilde{F}_N(x)=\Big[\tilde{\ell}_N(x),\tilde{g}^*_N(x)\Big]\cup\Big[\tilde{h}^*_N(x),\tilde{u}_N(x)\Big].
$$
Here $\tilde{\ell}_N\text{ and }\tilde{u}_N$ are the interpolants to $\ell\text{ and }u$ with error of order $O\Big(\frac{\log N}{N}\Big)$ obtained by \cite{BL}.

Observing that the end-points of $I_1=\big[\ell(x),g^*(x)\big]$ are approximated by the corresponding end-points of $\tilde{I}_1=\big[\tilde{\ell}_N(x),\tilde{g}^*_N(x)\big]$ with error of order $O\Big(\frac{\log N}{N}\Big)$, it is easy to conclude that 

$$
d_H(I_1,\tilde{I}_1)=O\bigg(\frac{\log N}{N}\bigg)
$$

Similarly, for $I_2=\big[h^*(x),u(x)\big]$ and $\tilde{I}_2=\Big[\tilde{h}^*_N(x),\tilde{u}_N(x)\Big]$, we get $d_H(I_2,\tilde{I}_2)=O\Big(\frac{\log N}{N}\Big)$.

Then, by Lemma~ \ref{Lemma:sets}, we conclude that 
$$d_H\big(F(x),\tilde{F}(x)\big)=O\bigg(\frac{\log{N}}{N}\bigg)$$.
\fi

Let us extend the error analysis for the case of $M$ holes.
Considering the significant metric chains approximating the holes, 
\iffalse
we observe that each of them is composed of paths following data on the boundaries of the holes and straight paths following the approximated PCTs of the holes. For a finite number of holes, $M$, there is a finite number of significant metric chains, usually $2M$. In order to study the approximation of the boundaries of the holes by polynomial interpolation to the data of the SMCs 
\fi
we define related {\bf auxiliary functions} as follows:

For each hole $H_i$ we have its boundary functions $g_i$ and $h_i$ defined on $[c_i,d_i]$. We extend each of these functions to the left and to the right, using their values at the PCTs, as we did above for the APCTs.
For example, we extend $h_i$ to the right with a constant value $h_i(d_i)$, until it intersects a boundary of another hole $H_j$, or one of the boundary functions $u$ or $\ell$. From the intersection point we follow the boundary function of $H_j$ ($h_j$ or $g_j$) until we reach the right PCT of $H_j$. From this point we continue to the right with the PCT value, and so on till we reach $x=b$.
In case we intersect $u$ or $\ell$, we follow the boundary function up to $x=b$. Similarly, we extend $h_i$ to the left up to $x=a$, and do the same for $g_i$, We denote the extended functions $h_{e,i}$ and $g_{e,i}$, and it is easy to verify that these functions are Lipschitz.
Note that for every significant metric chain there is a corresponding extended function $h_{e,i}$ or $g_{e,i}$.

The polynomial interpolating a significant metric chain data, is also interpolating the associate extended function along the parts lying on the boundary functions. In between boundary functions' segments, the deviation in the interpolated values are due to the deviation between the relevant PCT and the APCT values, which is of order $O(1/N)$ as $N\to \infty$, as proved in the case of one hole. As in the case of one hole, it follows here that the interpolating polynomials approximate the extended functions with approximation order $O(\log(N)/N)$. 

Let us review the definition of the approximation $\tilde F$. 
For each hole $H_i$, there are two significant metric chains, passing through its left and right APCTs, and following the data on its lower and upper boundaries.
Define the polynomials interpolating these data as $\tilde g_{e,i}$ and $\tilde h_{e,i}$ respectively. These polynomials interpolate the extended functions $g_{e,i}$ and $h_{e,i}$ along the parts lying on the boundary functions. As in the case of one hole, it follows here that the interpolating polynomials approximate the extended functions with approximation order $O(\log(N)/N)$.

Denote the left and the right APCTs of $H_i$ by $\tilde c_i$ and $\tilde d_i$, and the restrictions of $\tilde g_{e,i}$ and of $\tilde h_{e,i}$ to $[\tilde c_i,\tilde d_i]$ by $\tilde g_i$ and $\tilde h_i$ respectively. Also denote the restrictions of $g_{e,i}$ and of $h_{e,i}$ to $[\tilde c_i,\tilde d_i]$ by $g^*_i$ and $h^*_i$ respectively.

As in the case of one hole, it follows that as $N\to \infty$,
\begin{equation}\label{Eg}
\|g^*_i-\tilde g_i\|_{\infty,[\tilde c_i,\tilde d_i]}=O(\frac{log(N)}{N}),
\end{equation}
and
\begin{equation}\label{Eh}
\|h^*_i-\tilde h_i\|_{\infty,[\tilde c_i,\tilde d_i]}=O(\frac{log(N)}{N}).
\end{equation}

Viewing the presentation of $F$ by the procedure described in Section \ref{subsub1}, we note that the same SVF $F$ is obtained if we replace there the functions $\{g_i\}$ by $\{g^*_i\}$, $\{h_i\}$ by $\{h^*_i\}$, and the intervals $\{[c_i,d_i]\}$ by $\{[\tilde c_i,\tilde d_i]\}$.

Both $F$ and $\tilde F$ are defined
using the procedure described in Section \ref{subsub1}, replacing all the original boundary functions by their corresponding approximants, and the intervals $\{[c_i,d_i]\}$ by $\{[\tilde c_i,\tilde d_i]\}$. It follows that if $J^*(x)>0$,
\begin{equation}\label{Fatxextended}
F(x)=[\ell(x),g^*_{i_1}(x)]\cup\bigcup_{j=1}^{J^*(x)-1}[g^*_{i_j}(x),h^*_{i_{j+1}}(x)]\cup[h^*_{i_J}(x),u(x)],
\end{equation}
and, if $J^*(x)=0$,
$F(x)=[\ell(x),u(x)]$.

Similarly, the approximation $\tilde F(x)$ is defined using the set of approximated boundary functions: If $J^*(x)>0$,
\begin{equation}\label{Fappr}
\tilde F(x)=[\ell(x),\tilde g_{i_1}(x)]\cup\bigcup_{j=1}^{J^*(x)-1}[\tilde g_{i_j}(x),\tilde h_{i_{j+1}}(x)]\cup[\tilde h_{i_J}(x),u(x)].
\end{equation}
If $J^*(x)=0$, $\tilde F(x)=[\tilde \ell(x),\tilde u(x)]$.
The index $J^*(x)$ in (\ref{Fappr}) is the same as the index in (\ref{Fatxextended}) since we use the same definition intervals $\{[\tilde c_i,\tilde d_i]\}$ for the boundary functions, and the holes are separated.

To complete the proof of Theorem \ref{Thm1}, we observe that each interval in (\ref{Fatxextended}) has a corresponding interval in (\ref{Fappr}). Using the estimates in (\ref{Eg}) and (\ref{Eh}), it follows that the Hausdorff distance between corresponding intervals is $O(log(N)/N)$ as $N\to \infty$, and the proof is completed by using Lemma \ref{Lemma:sets}.
\end{proof}

\iffalse
\subsubsection{Defining the approximation of the SVF}\label{subsub2}
\hfill
\medskip
\begin{enumerate}
\item 
Identify a hole $H_i$
\item Define its left and right APCTs, denoted by $\tilde c_i$ and $\tilde d_i$.
\item Extend the upper and lower data of the hole to the right and to the left by the corresponding APCT values.
\item
For each of the two data sequences compute the polynomial interpolant.
This gives approximants $\tilde h_i$ and $\tilde g_i$. The restriction of these approximants to the interval $[\tilde c_i,\tilde d_i]$ defines the approximation $\tilde H_i$ of the hole $H_i$.
\item
Compute the approximations to the lower and the upper boundary functions,
$\tilde \ell\sim \ell$ and $\tilde u\sim u$.
\item
Define the approximation $\tilde F\sim F$ 
using the procedure described in Section \ref{subsub1}, replacing all the original boundary functions by their corresponding approximants.
\end{enumerate}
\fi

\subsection{Numerical results}
We demonstrate the interpolation process on one SVF, denoted by $F_A$, and displayed in figures \ref{fig:example_A2}. $F_A$ is explicitly given by,
\[F_A(x)=\begin{cases} 
      [\ell_{A},u_{A}],\quad &x\in[-1,-0.981]\cup[-0.0188,0]\cup[0,0.153]\cup[0.847,1],\\
      \big[\ell_{A},g_{A_{2}}\big]\cup\big[h_{A_{2}},g_{A_{1}}\big]\cup\big[h_{A_{1}},u_{A}\big],\quad &x\in[-0.981,-0.0188],\\
    \big[\ell_{A},g_{A_{3}}\big]\cup\big[h_{A_{3}},\ell_{A}\big],\quad &x\in[0.153,0.847].
   \end{cases}
\]
where, 
\begin{align*}
&u_{A}=\tanh{(-x)} + 1,\quad\ell_{A}=-\tanh{(-x)}-1\\
&h_{A_{1}}=-\frac{1}{\cosh(2x + 1)},\quad g_{A_{1}}=\frac{1}{\cosh(2x + 1)}-\frac{4}{3}\\
&h_{A_{2}}=-\frac{1}{\cosh(2x + 1)}+\frac{4}{3},\quad g_{A_{2}}=\frac{1}{\cosh(2x + 1)}\\
&h_{A_{3}}=-\frac{1}{\cosh(2x - 1)}+ \frac{4}{5},\quad g_{A_{3}}=\frac{1}{\cosh(2x - 1)} - \frac{4}{5}
\end{align*}
\subsubsection{The figures}
\begin{figure}[!ht]
  \centering
  \subfloat[][The Set-Valued Function $F_A$]{\includegraphics[width=.4\textwidth]{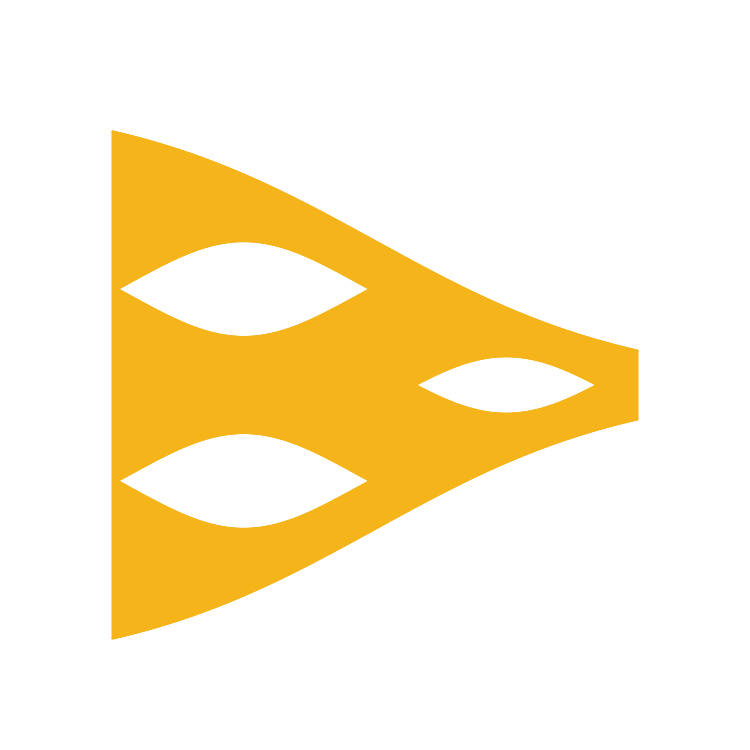}}\quad
  \subfloat[][Approximation with 10 samples]{\includegraphics[width=.4\textwidth]{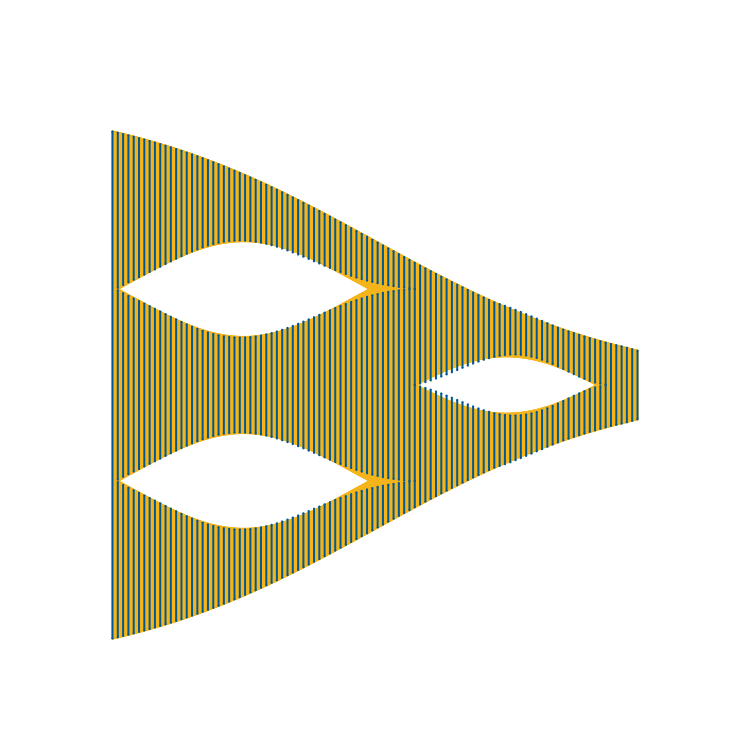}}\\
  \subfloat[][Approximation with 20 samples]{\includegraphics[width=.4\textwidth]{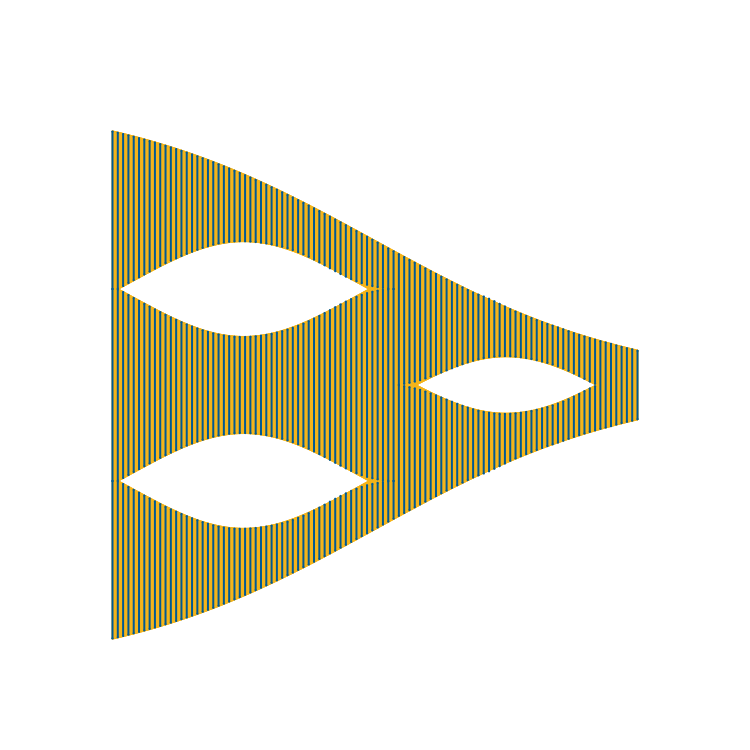}}\quad
  \subfloat[][Approximation with 30 samples]{\includegraphics[width=.4\textwidth]{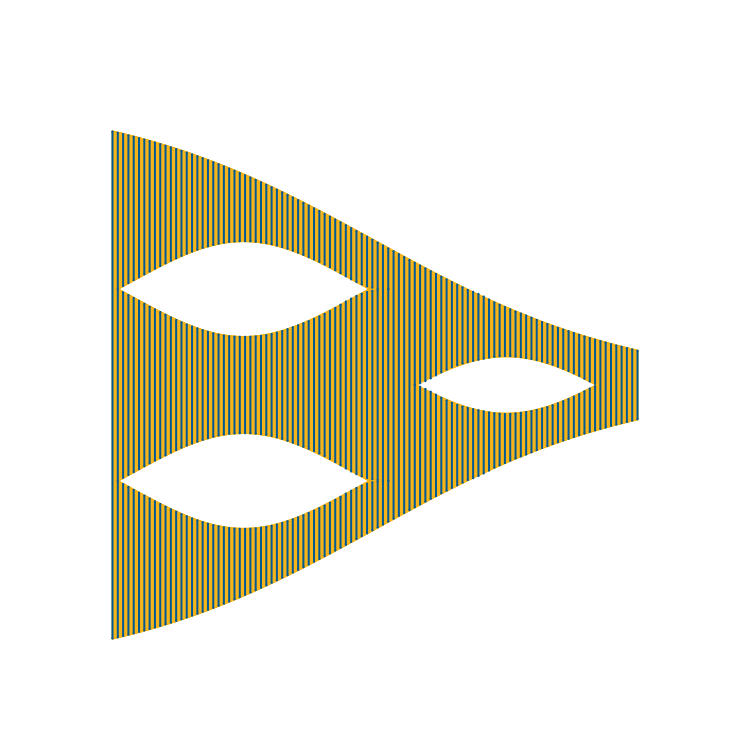}}
  \caption{The set-valued function $A$ and its approximations. Each approximation is represented by vertical blue lines, drawn on the graph of the original function, which is colored in yellow.}
  \label{fig:example_A2}
\end{figure}
\begin{figure}[!ht]
    \centering
    \includegraphics[width=0.8\textwidth]{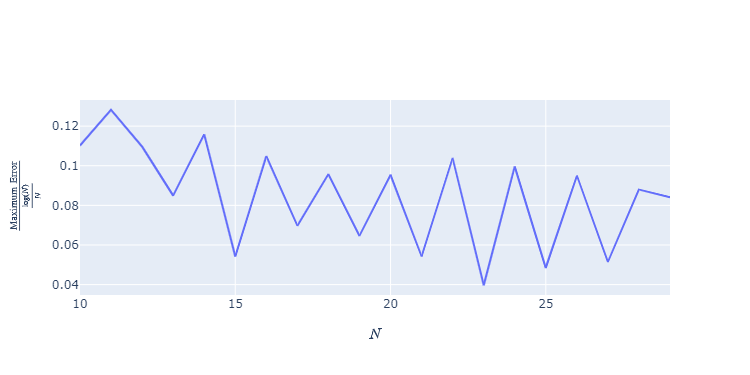}
    \caption{The ratio between the interpolation error and $\frac{\log{(N)}}{N}$ as a function of the number of\\ interpolation points $N$ for the set-valued function $A$.}
    \label{fig:error_example_A2}
\end{figure}

The figures below consist of four sub-figures. The last three of the sub-figures show three interpolants corresponding to different number of interpolation points $N=n+1$. Each interpolant is represented by vertical blue lines, drawn on the graph of the original function, which is colored in yellow. The first sub-figures shows the graph of $F_A$. We show the interpolation error, termed
$$
\text{Maximum Error}=\max_{j}{\Big\{d_{H}\big(F(\xi_{j}),\Tilde{F}(\xi_{j})\big)\Big\}},
$$
where $\{\xi_{j}\}_{j=1}^{2N}$ is a set of $2N$ equidistant points in $[a,b]$. We plot 
$$
G=\frac{\text{Maximum Error}}{\frac{\log{N}}{N}},
$$
as a function of the number of the interpolation points $N$.
\subsubsection{Conclusions from the figures}
\label{conclusions_section_1}
\begin{enumerate}
    \item Figure \ref{fig:example_A2} demonstrates that the interpolation error decreases as $N$ increases in accordance with the theory.
    
    Most importantly, we observe that the maximal error occurs near the PCTs. This observation suggests that to reduce the maximal error we need to approximate the location of the PCTs more accurately. This is done in the next section.
    
    \item As seen from figure
    \ref{fig:error_example_A2}, the maximal error divided by $\frac{\log{N}}{N}$ is bounded for each $N$. This indicates that the error decays at the rate as predicted by the theoretical result (\ref{eq:error_a}). 
\end{enumerate}

\section{Improved Approximation of set-valued functions with $C^4$ boundaries}\label{C4boundaries}

In this section we modify the algorithm and improve the theoretical results of the previous chapter, to achieve a better rate of decay of the error in the interpolation. We obtain a rate of $O(|X|^{4})$, where $X$ is the partition determining the interpolation points. This is done under the assumption that the smoothness of the boundaries of $Graph(F)$ is $C^{4}$. 

According to our conclusions in \ref{conclusions_section_1}, the maximal error occurs in the vicinity of the PCTs. Thus, we suggest a method for achieving high order approximation of the points of change of topology, which results in decreasing the overall interpolation error. Another factor contributing to the improvement in the approximation order is due to the use of spline interpolation,
using a "not-a-knot" cubic spline interpolation.

By modifying the algorithm of the previous section, we can separate the holes from each other. Therefore, we firstly discussed the case of only one hole $H$, defined by (\ref{eq:definition_of_hole}), with $C^4$ boundaries. Later on we extend the result to approximating a set-valued function $F$ whose graph has several holes with $C^4$ boundaries.

\subsection{Notion and notations}\label{Notions}
\begin{enumerate}
    \comment{\item We say that $\partial F$ is $H\ddot{o}l_{\alpha}$ if each $f\in\partial F$ is $H\ddot{o}l_{\alpha}(I)$ where $I\subset[a,b]$ is the domain of definition of $f$.}
    \item For the simplicity of the presentation, we use here the uniform partition,
    \begin{equation}
    X=\{x_i=a+i\Delta\}_{i=0}^N,\ \ \Delta=\frac{b-a}{N},
    \end{equation}
    although the algorithm and the approximation results apply for a general partition $X$.
    \item A set of \textbf{Boundary Metric Chains} of a given finite set of samples $\big\{F(x_{i})\big\}_{i=0}^{N}$, at a partition $X$, is given by:
    \begin{equation}
    \begin{aligned}
    BMC\bigg(\big\{F(x_{i})\big\}_{i=0}^{N}\bigg)=\bigg\{
	\big(f_{n},...,f_{m}\big): \exists\big(f_{0},...,f_{n},...,f_{m},...,f_{N}\big)\in SMC\Big(\big\{F(x_{i})\big\}_{i=0}^{N}\Big) \\ \quad\land\quad \exists \eta\in\partial F \text{ s.t } f_{k}=\eta(x_{k}),\quad k=n,...,m  \bigg\}.
    \end{aligned}
    \end{equation}
    \item Let the hole be defined on the interval $(c,d)$ where
    $a<c<d<b$, and let $\{x_i\}_{i=n}^m$ be all points of $X$ in $(c,d)$.
    \item It is easy to see that a boundary metric chain derived for the hole $H$ is a significant metric chain restricted to $[c,d]$.

\end{enumerate}
\subsection{A description of the algorithm}\label{Thealgorithm}

\begin{enumerate}
\item \textbf{Identifying the hole}:

We modify the algorithm of the previous section so it identifies the cross-sections cutting the hole $H$. Then, instead of producing the set of \textbf{Significant Metric Chains}, the algorithm produces the set of \textbf{Boundary Metric  Chains}. Thus, we get the two boundary metric chains of the hole $H$, $\big\{g(x_n),g(x_{n+1}),\cdots,g(x_m)\big\}$ and $\big\{h(x_n),h(x_{n+1}),\cdots,h(x_m)\big\}$ for the lower and the upper boundaries respectively.
\item \textbf{Approximating the right and left PCTs of $H$}:

In order to find an approximation for the left PCT, we interpolate the first four values of $g$ and of $h$ at $\{x_i\}_{i=n}^{n+3}$, using two cubic polynomial interpolants, $\Tilde{g}_{L}$ and $\Tilde{h}_{L}$, approximating the lower and upper left parts of the hole boundaries. Then,
we find the intersection point of these two polynomials. If such an intersection exists and is in the interval $[x_{n}-\Delta,x_{n}]$ then it gives a better approximation of the location of the left PCT, $\big(\Tilde{c},\Tilde{g}_{L}(\Tilde{c})\big)$, of $H$. However, if $\Tilde{c} < x_{n}-\Delta$ then we we take the approximation of the PCT to be $x_n-\Delta$, which is computed by the algorithm of the previous section. See Lemma \ref{lem:location_of_pct_inverval}. A similar procedure is applied near the right PCT of $H$, using cubic interpolation at the points $\{x_i\}_{i=m-3}^m$, to yield the approximation $\big(\Tilde{d},\Tilde{g}_{R}(\Tilde{d})\big)$ of the right PCT.

\item \textbf{Approximating the lower and the upper boundaries of $H$}:

 In order to approximate the boundaries of $H$, we use "not-a-knot" cubic spline interpolation on the extended data-sets $\big\{\big(\Tilde{c},\Tilde{g}_{L}(\Tilde{c})\big),\big(\Tilde{d},\Tilde{g}_{R}(\Tilde{d})\big)\big\}\cup\big\{(x_i,g(x_i))\big\}_{i=n}^{m}$ and $\big\{\big(\Tilde{c},\Tilde{h}_{L}(\Tilde{c})\big),\big(\Tilde{d},\Tilde{h}_{R}(\Tilde{d})\big)\big\}\cup\big\{(x_i,h(x_i))\big\}_{i=n}^{m}$ to obtain the approximation of the lower and the upper boundaries respectively.

\end{enumerate}
\subsection{Error Analysis for one hole}
In this sub-section, we find an error estimate of the output of the above algorithm. First we analyse the error in approximating the location of the PCT's,
and then we consider the approximation of the boundaries of the hole.

\subsubsection{Error estimate of the PCT approximation}

We find the approximation order of the left PCT $\big(c,g(c)\big)$ (the proof is similar for the right PCT). We estimate the Euclidean distance between the original and approximated PCT, 
\begin{equation}
    \label{eq:B_pct_error_expression}
    E=\sqrt{|c-\Tilde{c}|^{2}+|g(c)-\Tilde{g}_{L}(\Tilde{c})|^{2}}.
\end{equation}
\begin{lemma}
\label{lem:location_of_pct_inverval}
The left PCT is in the interval $[x_n-\Delta,x_n)$ and the right PCT is in the interval $(x_m,x_m+\Delta]$
\end{lemma}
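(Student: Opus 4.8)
The plan is to prove the two containments directly from the defining property of the indices $n$ and $m$, namely that $\{x_i\}_{i=n}^m$ are exactly the grid points of $X$ lying in the open interval $(c,d)$ on which the hole is defined. This means $x_n$ is the first grid point strictly to the right of $c$ and $x_m$ is the last grid point strictly to the left of $d$; in particular the immediate neighbours $x_{n-1}=x_n-\Delta$ and $x_{m+1}=x_m+\Delta$ must lie outside $(c,d)$, and this single observation drives both estimates.

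First I would establish the left containment $c\in[x_n-\Delta,x_n)$. Since $x_n\in(c,d)$ we have $c<x_n$, which already gives the right endpoint of the interval. For the left endpoint I would argue by contradiction: if $c<x_n-\Delta=x_{n-1}$, then, combining with $x_{n-1}<x_n<d$, we would obtain $x_{n-1}\in(c,d)$, contradicting the minimality of $n$ as the index of the first interior grid point. Hence $x_n-\Delta\le c<x_n$.

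The right containment $d\in(x_m,x_m+\Delta]$ follows by the symmetric argument. Since $x_m\in(c,d)$ we have $x_m<d$. If $d>x_m+\Delta=x_{m+1}$, then, using $c<x_m<x_{m+1}$, we would get $x_{m+1}\in(c,d)$, contradicting the maximality of $m$. Hence $x_m<d\le x_m+\Delta$.

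There is essentially no serious obstacle here; the statement is a direct consequence of the uniform spacing $\Delta$ together with the definition of $x_n,x_m$ as the first and last interior grid points, and it presupposes that at least one such point exists (as the subsequent cubic interpolation step requires). The only point deserving a little care is the handedness of the open and closed endpoints: the PCT coordinate $c$ may coincide with $x_{n-1}$, so the left endpoint is included, but cannot equal $x_n$ because $x_n$ is interior to $(c,d)$, which is exactly what the half-open interval $[x_n-\Delta,x_n)$ records, and symmetrically for $d$ with $(x_m,x_m+\Delta]$.
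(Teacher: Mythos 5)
Your argument is correct and is exactly the reasoning the paper compresses into its one-line remark that ``the proof follows from the definition of $\{x_i\}_{i=n}^m$'': since $x_{n-1}$ and $x_{m+1}$ lie outside $(c,d)$ while $x_n,x_m$ lie inside, the uniform spacing forces $c\in[x_n-\Delta,x_n)$ and $d\in(x_m,x_m+\Delta]$. No discrepancy with the paper's approach.
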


The proof follows from the definition of $\{x_i\}_{i=n}^m$.

According to the above lemma, we focus on the interval $[x_{n}-\Delta,x_{n}]$ for approximating the left PCT. We denote by $I^{*}$ the open interval between $c$ and $\Tilde{c}$, and let $\phi=h-g$ and $\psi=\Tilde{h}_{L}-\Tilde{g}_{L}$.
\comment{
By using the first five terms in Taylor expansion of $\phi$ at $x=c$
\begin{equation}
    \phi_{e}(x)=\phi(c)+\frac{\phi'(c)}{1!}(x-c)+\frac{\phi''(c)}{2!}(x-c)^{2}+\frac{\phi^{(3)}(c)}{3!}(x-c)^{3}+\frac{\phi^{(4)}(c)}{4!}(x-c)^{4}.
\end{equation}
}
\comment{
\begin{proposition} 
\label{prop:pct_estimate_b}
If $\psi^{'}(x)>C>0$ for $x\in I^{*}$, then, step 2 in the above algorithm approximates the left PCT with error $E=O(\Delta^{4})$ as $\Delta\to 0$.
\end{proposition}
\begin{proof}
Finding the intersection of $\Tilde{g}_{L}$ and $\Tilde{h}_{L}$ is equivalent to finding the root of $\psi$. Note that $\psi$ interpolates $\phi$ at $\{x_0,x_1,x_2,x_3\}$, and by definition, $\psi(\Tilde{c})=0$ and $\phi(c)=0$. By the assumption that $\psi^{'}(x)>C>0$ for $x\in I^{*}$, it follows by the Mean Value Theorem that
\begin{equation}
\label{al:error_estimate_phi_b}
    \frac{|\psi(c)-\psi(\Tilde{c})|}{|c-\Tilde{c}|}=\frac{|\psi(c)-0|}{|c-\Tilde{c}|}=|\psi'(\xi)|>C,
\end{equation}
where $\xi\in I^{*}$. Using the error estimate in $(\ref{eq:polynomial_extrapolation_error_bound})$ and using (\ref{al:error_estimate_phi_b}) we obtain
\begin{align*}
    \frac{|\psi(c)|}{|c-\Tilde{c}|}=\frac{|\psi(\Tilde{c})+O(\Delta^4)|}{|c-\Tilde{c}|}=\frac{|0+O(\Delta^4)|}{|c-\Tilde{c}|}>C,
\end{align*}
and thus
\begin{equation}
\label{eq:pct_x_error_b}
    |c-\Tilde{c}|=O(\Delta^4).
\end{equation}
 We recall if $f(x)$ is continuous on $[a,b]$ and $f\in C^{n+1}$ in $(a,b)$. If $p_{n}(x)$ interpolates $f(x)$ at $X\subset[a,b]$, then for every $x$ (\cite{numerical_analysis},Chapter 2.2.2)
    \begin{equation}
    \label{eq:polynomial_interpolation_error_bound_classical}
        |f(x)-p_{n}(x)|=\frac{\big|f^{(n+1)}\big(\xi(x)\big)\big|}{(n+1)!}\prod^{n}_{i=0}{(x-x_{i})},
    \end{equation}
    where $\xi(x)$ is some point in the open interval containing $(a,b)$ and $x$. If $X$ consists of equally spaced points $x_{i+1}=x_{i}+h$, where $h=\frac{b-a}{n}$ and $x_{0}=a$, then for $x\in[a,b]$ (\cite{numerical_analysis},Chapter 2.2.2)
    \begin{equation}
    \label{eq:polynomial_interpolation_equally_spaced_error_bound}
        |f(x)-p_{n}(x)|\leq \frac{h^{n+1}}{4(n+1)}\max_{\xi\in (a,b)}{\big|f^{(n+1)}(\xi)\big|}.
    \end{equation}
    It is easy to observe that if $x\in[a-h,a)\cup(b,b+h]$, then
    \begin{equation}
    \label{eq:prod_x}
        |(x-x_0)(x-x_1)\cdots(x-x_n)|\leq h^{n+1}(n+1)! \ .
    \end{equation}
    By using the equality in  (\ref{eq:polynomial_interpolation_error_bound_classical}) and (\ref{eq:prod_x}), we obtain
    \begin{equation}
    \label{eq:polynomial_extrapolation_error_bound}
        |f(x)-p_{n}(x)|\leq h^{n+1}\max_{\xi\in (a-h,b+h)}{|f^{(n+1)}(\xi)|}.
    \end{equation}
Then, by $(\ref{eq:polynomial_extrapolation_error_bound})$, the Mean Value Theorem, $(\ref{eq:pct_x_error_b})$ and since $\Tilde{g}_{L}'$ is bounded in $I^{*}$, we obtain
\begin{equation}
\label{eq:pct_y_error_b}
    |g(c)-\Tilde{g}_{L}(\Tilde{c})|=|\Tilde{g}_{L}(c)-\Tilde{g}_{L}(\Tilde{c})+O(\Delta^4)|\leq |\Tilde{g}_{L}'(\gamma)||c-\Tilde{c}|+O(\Delta^4)=O(\Delta^4),
\end{equation}
where $\gamma$ is between $c$ and $\Tilde{c}$.
Combing $(\ref{eq:pct_x_error_b})$ and $(\ref{eq:pct_y_error_b})$, we conclude from (\ref{eq:B_pct_error_expression}) that the order of the approximation of the location of the left PCT is $O(\Delta^4)$.
\end{proof}}

\begin{proposition} 
\label{prop:pct_estimate_b2}
If $\phi^{'}(c)=C>0$, then, step 2 in the above algorithm approximates the left PCT with error $E=O(\Delta^{4})$ as $\Delta\to 0$.
\end{proposition}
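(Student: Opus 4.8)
The plan is to control the two coordinate contributions to the error $E$ in \eqref{eq:B_pct_error_expression} separately, reducing both to the one-step extrapolation error of a cubic interpolant. Recall the notation $\phi=h-g$ and $\psi=\tilde h_L-\tilde g_L$. Since $\tilde g_L$ and $\tilde h_L$ are the cubic interpolants of $g$ and $h$ at the common nodes $\{x_i\}_{i=n}^{n+3}$, and interpolation is linear, $\psi$ is exactly the cubic interpolant of $\phi\in C^4$ at these four nodes. The two defining relations are $\phi(c)=0$ (the left PCT) and $\psi(\tilde c)=0$ (the computed intersection). By Lemma~\ref{lem:location_of_pct_inverval} both $c$ and $\tilde c$ lie in $[x_n-\Delta,x_n]$, an interval of length $\Delta$ that shrinks to $\{c\}$ as $\Delta\to0$.

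First I would bound the abscissa error $|c-\tilde c|$. The point $\tilde c$ lies one step to the left of the interpolation nodes, so the classical interpolation-error formula for a cubic applied to the $C^4$ function $\phi$ (see \cite{numerical_analysis}) gives the extrapolation estimate $\|\phi-\psi\|_{\infty,[x_n-\Delta,x_n]}=O(\Delta^4)$ as $\Delta\to0$. In particular, using $\psi(\tilde c)=0$,
\begin{equation*}
|\phi(\tilde c)|=|\phi(\tilde c)-\psi(\tilde c)|=O(\Delta^4).
\end{equation*}
On the other hand, by the Mean Value Theorem and $\phi(c)=0$,
\begin{equation*}
\phi(\tilde c)=\phi(\tilde c)-\phi(c)=\phi'(\xi)(\tilde c-c),\qquad \xi\in I^*.
\end{equation*}
Because $\phi\in C^4$ makes $\phi'$ continuous with $\phi'(c)=C>0$, and since $[x_n-\Delta,x_n]\to\{c\}$, for $\Delta$ small enough $\phi'\ge C/2$ throughout $I^*$. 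Combining the last two displays yields $|c-\tilde c|=|\phi(\tilde c)|/|\phi'(\xi)|=O(\Delta^4)$.

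Next I would bound the ordinate error by splitting it as
\begin{equation*}
|g(c)-\tilde g_L(\tilde c)|\le |g(c)-\tilde g_L(c)|+|\tilde g_L(c)-\tilde g_L(\tilde c)|.
\end{equation*}
The first term is again a one-step cubic extrapolation error for the $C^4$ function $g$, hence $O(\Delta^4)$. For the second term, the Mean Value Theorem gives $|\tilde g_L(c)-\tilde g_L(\tilde c)|\le \|\tilde g_L'\|_{\infty,I^*}\,|c-\tilde c|$; since $\tilde g_L'$ is uniformly bounded near $c$ (it converges to $g'$ as $\Delta\to0$) and $|c-\tilde c|=O(\Delta^4)$, this term is also $O(\Delta^4)$. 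Substituting both coordinate errors into \eqref{eq:B_pct_error_expression} gives $E=O(\Delta^4)$.

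The main obstacle is the transfer of the non-degeneracy hypothesis, which here is placed on the true function $\phi$ at the true point $c$ (namely $\phi'(c)=C>0$), into a uniform lower bound on $\phi'$ over the moving interval $I^*$; this is where the continuity of $\phi'$ and the shrinking of $[x_n-\Delta,x_n]$ are essential, and it is exactly the difference from the discarded variant that instead assumed $\psi'>C$ directly on $I^*$. A secondary point that must be checked is that the intersection $\tilde c$ actually exists inside $[x_n-\Delta,x_n]$: since $\phi'(c)=C>0$ makes $\phi$ strictly increasing through its zero at $c$, the interpolant $\psi$ inherits a sign change on $[x_n-\Delta,x_n]$ for small $\Delta$ in the generic case; in the non-generic case where $c$ is within $O(\Delta^4)$ of $x_n-\Delta$, the fallback value $x_n-\Delta$ prescribed by the algorithm is itself an $O(\Delta^4)$ approximation of $c$, so the estimate $E=O(\Delta^4)$ holds in either case.
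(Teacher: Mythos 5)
Your proof is correct and follows essentially the same route as the paper's: the one-step cubic extrapolation bound $\|\phi-\psi\|_{\infty}=O(\Delta^4)$ near $x_n$, the Mean Value Theorem with a uniform lower bound on $\phi'$ obtained from $\phi'(c)=C>0$ by continuity on the shrinking interval, and a second Mean Value Theorem argument on $\tilde g_L$ for the ordinate. Your closing remark on existence of the intersection and the fallback to $x_n-\Delta$ in the non-generic case is a slightly more explicit treatment of a point the paper handles via the sign-change observation $\phi(c\pm\Delta)=\pm C\Delta+O(\Delta^2)$, but it does not change the substance of the argument.
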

\begin{proof}
Finding the intersection of $\Tilde{g}_{L}$ and $\Tilde{h}_{L}$ is equivalent to finding the root of $\psi$. Note that $\psi$ is a cubic polynomial which interpolates $\phi$ at $\{x_n,x_{n+1},x_{n+2},x_{n+3}\}$, and by definition, $\psi(\Tilde{c})=0$ and $\phi(c)=0$. $\phi\in C^4[a,b]$, and it can be extended as a $C^4$ function on $\mathbb{R}$. Using the error formulae for polynomial interpolation it follows that
\begin{equation}\label{OD4}
|\phi(x)-\psi(x)|=O(\Delta^4)
\end{equation}
%and 
%\begin{equation}\label{OD3}
%|\phi'(x)-\psi'(x)|=O(\Delta^3),
%\end{equation}
as $\Delta\to 0$,
in an $O(\Delta)$ neighborhood of $x_n$.

By the assumption that $\phi^{'}(c)=C>0$, it follows that, for a small enough $\Delta$, $\phi^{'}(x)>\tilde C>0$ in an $O(\Delta)$ neighborhood of $x_n$.
Observing that 
$$\phi(c\pm \Delta)=\pm C\Delta +O(\Delta^2),$$
and in view of (\ref{OD4}), it follows that for a small enough $\Delta$, $\psi$ changes sign in $[c-\Delta,c+\Delta]$, hence
$\tilde c\in [c-\Delta,c+\Delta]$.

To estimate $|c-\tilde c|$ we employ the Mean Value Theorem,
\begin{equation}\label{MVT}
    \frac{|\phi(c)-\phi(\tilde c)|}{|c-\Tilde{c}|}=\phi'(\xi)>\tilde C.
\end{equation}
Since $\phi(c)=0$, it follows that 
$|c-\tilde c|<\frac{|\phi(\tilde c)|}{\tilde C}.$
Using (\ref{OD4}), $
|c-\tilde c|<\frac{|\psi(\tilde c)+O(\Delta^4)|}{\tilde C},$ and recalling that $\psi(\tilde c)=0$,
\begin{equation}\label{cminusct}
|c-\tilde c|=O(\Delta^4).
\end{equation}

To evaluate the error in $y$-coordinate of the approximated PCT, we estimate $|g(c)-\Tilde{g}_{L}(\Tilde{c})|$.
Using the Mean Value Theorem, and since $\tilde g'$ is bounded near $c$, we obtain
\begin{equation}
\label{eq:pct_y_error_b}
    |g(c)-\Tilde{g}_{L}(\Tilde{c})|=|\Tilde{g}_{L}(c)-\Tilde{g}_{L}(\Tilde{c})+O(\Delta^4)|\leq |\Tilde{g}_{L}'(\xi)||c-\Tilde{c}|+O(\Delta^4)=O(\Delta^4),
\end{equation}
where $\xi$ is between $c$ and $\Tilde{c}$.
Combining the last two error estimates, we conclude for the error $E$ in approximating the location of the left PCT that $$E=O(\Delta^4),$$
as $\Delta\to 0$.
\end{proof}

\subsubsection{Estimating the approximation error}\hfill

\medskip
As defined above, we approximate the lower and upper boundaries of $H$, using "not-a-knot" cubic spline interpolation on the data-sets $$\big\{\big(\Tilde{c},\Tilde{g}_{L}(\Tilde{c})\big),\big(\Tilde{d},\Tilde{g}_{R}(\Tilde{d})\big)\big\}\cup\big\{(x_i,g(x_i))\big\}_{i=n}^{m}$$
and 
$$\big\{\big(\Tilde{c},\Tilde{h}_{L}(\Tilde{c})\big),\big(\Tilde{d},\Tilde{h}_{R}(\Tilde{d})\big)\big\}\cup\big\{(x_i,h(x_i))\big\}_{i=n}^{m}.$$ We denote the resulting approximations to $g$ and $h$ on the interval $[\tilde c, \tilde d]$ by $\tilde g$ and $\tilde h$ respectively. This is performed for each hole in $Graph(F)$. We also use "not-a-knot" cubic spline interpolation to approximate the functions $\ell$ and $u$ describing the lower and upper boundaries of $Graph(F)$, denoting the appropriate approximations by $\tilde \ell$ and $\tilde u$. The approximation of the boundaries of $Graph(F)$ induces the definition of the approximation $\tilde F$ of the set-valued function $F$. 

For simplicity of presentation  we 
introduce the definition of $
\tilde F$ and the error analysis for the case of one hole. A full error analysis for the case of several holes is presented in Section \ref{Mholes} for the case of holes of H\"older type singularities. The method of approximating the PCTs and the boundaries of a hole with H\"older type singularities is different, but the method of extending the approximation results to the case of several holes holds for the case of $C^4$ boundaries as well.

\begin{definition}\label{Def2}{\bf The approximation $\tilde F(x)$.}
\begin{equation}\label{Def3eq}
\tilde F(x)=\begin{cases} 
      [\tilde \ell(x),\tilde u(x)], &x\in[a,\tilde c]\cup[\tilde d,b],\\
      \big[\tilde \ell(x),\tilde h(x)\big]\cup\big[\tilde g(x),\tilde u(x)\big], &x\in[\tilde c, \tilde d].
   \end{cases}
\end{equation}
\end{definition}

We recall that for $f\in C^4([\alpha,\beta])$ the "not-a-knot" cubic spline interpolation $s(f;x)$ satisfies the following error estimate (see \cite{numerical_analysis}, Chapter 2.3.4):
    \begin{equation}\label{notaknot}
        \norm{f(x)-s(f;x)}_{\infty,[\alpha,\beta]}\leq C\Delta^{4} \norm{f^{(4)}}_{\infty,[\alpha,\beta]},  
    \end{equation}
    where $\Delta$ is the length of the maximal knots' interval, and $C$ is a constant independent of $f$ and $\Delta$.\\
\iffalse
Let $\Tilde{F}$ be a set-valued interpolant interpolating $F$ at $N+1$ $\Delta$-equally spaced samples $\big\{{x_i,F(x_i)}\big\}_{i=0}^{N}$. We find an error estimate of the interpolation in the hole region. The error is computed using Hausdorff metric and is denoted by
\begin{align*}
e(F,\Tilde{F})=\max_{x\in[\min\{{\Tilde{c},c}\},\max\{{\Tilde{d},d}\}]}{\bigg\{haus\big(F(x),\Tilde{F}(x)\big)\bigg\}},    
\end{align*}
where $\Tilde{c}$ and $\Tilde{d}$ are the $x$-coordinate of the approximated left and right PCT respectively. \fi
\begin{proposition}
\label{prop:error_estimate_b}
 For $x\in [a,b]$, $d_H(F(x),\Tilde{F}(x))=O(\Delta^{4})$ as $\Delta\to 0$.
\end{proposition}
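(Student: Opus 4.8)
The plan is to reduce the set-valued estimate to four scalar estimates on the boundary functions, assemble them interval-by-interval through Lemma~\ref{Lemma:sets}, and treat separately the two thin strips near the PCTs where $F(x)$ and $\tilde F(x)$ carry a different number of intervals. Throughout I fix, without loss of generality, the signs of $c-\tilde c$ and $d-\tilde d$, the reversed signs being symmetric. For the scalar orders: since $\ell,u\in C^4$, the not-a-knot estimate (\ref{notaknot}) gives $\|\ell-\tilde\ell\|_\infty=O(\Delta^4)$ and $\|u-\tilde u\|_\infty=O(\Delta^4)$. For the hole boundaries, the spline $\tilde g$ interpolates the exact samples $\{(x_i,g(x_i))\}_{i=n}^m$ together with the approximated-PCT endpoints $(\tilde c,\tilde g_L(\tilde c))$ and $(\tilde d,\tilde g_R(\tilde d))$; by Proposition~\ref{prop:pct_estimate_b2} these endpoints deviate from the exact PCTs by $O(\Delta^4)$ in both coordinates, while the maximal knot spacing stays $O(\Delta)$. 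As cubic spline interpolation is a max-norm-bounded linear map with a $\Delta$-independent bound, the $O(\Delta^4)$ perturbation of the endpoint data perturbs the interpolant by $O(\Delta^4)$; combined with (\ref{notaknot}) this yields $\|g-\tilde g\|_\infty=O(\Delta^4)$ and $\|h-\tilde h\|_\infty=O(\Delta^4)$ on $[c,d]\cap[\tilde c,\tilde d]$.

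Next I would split $[a,b]$ into the common no-hole region, the common hole region, and the two transition strips. For $x$ outside $[\min(c,\tilde c),\max(d,\tilde d)]$ both sets are single intervals, $F(x)=[\ell(x),u(x)]$ and $\tilde F(x)=[\tilde\ell(x),\tilde u(x)]$, so $d_H(F(x),\tilde F(x))=\max\{|\ell-\tilde\ell|,|u-\tilde u|\}=O(\Delta^4)$. On the common hole region $x\in[\max(c,\tilde c),\min(d,\tilde d)]$ both sets split into two intervals whose endpoints are matched in the pairs $(\ell,\tilde\ell)$, $(g,\tilde g)$, $(h,\tilde h)$, $(u,\tilde u)$, each $O(\Delta^4)$-close, so the Hausdorff distance of corresponding intervals is $O(\Delta^4)$ and Lemma~\ref{Lemma:sets} gives $d_H(F(x),\tilde F(x))=O(\Delta^4)$.

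The main obstacle is the transition strips. Consider the left strip $S=[\min(c,\tilde c),\max(c,\tilde c)]$, of width $|c-\tilde c|=O(\Delta^4)$ by Proposition~\ref{prop:pct_estimate_b2}; on $S$ exactly one of $F(x),\tilde F(x)$ has the hole open, the two sets differ in topology, and Lemma~\ref{Lemma:sets} no longer applies directly. The key point is that the open gap has width $O(\Delta^4)$ throughout $S$. If $\tilde c<c$ the gap belongs to $\tilde F$ and equals $\tilde h(x)-\tilde g(x)$; since $\tilde g(\tilde c)=\tilde h(\tilde c)$ by construction and $(\tilde h-\tilde g)'$ is bounded independently of $\Delta$ (it converges to $\phi'=(h-g)'$), we get $\tilde h(x)-\tilde g(x)=O(|x-\tilde c|)=O(\Delta^4)$ on $S$. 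If $\tilde c>c$ the gap belongs to $F$ and equals $\phi(x)$; as $\phi(c)=0$ and $\phi'$ is bounded, $\phi(x)=O(|x-c|)=O(\Delta^4)$. In either case a single interval and a two-interval set differing only by an excised gap of width $w$ lie within Hausdorff distance $w/2$ of each other, up to the $O(\Delta^4)$ endpoint discrepancies; hence $d_H(F(x),\tilde F(x))=O(\Delta^4)$ on $S$, and symmetrically on the right strip.

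Finally I would assemble the three contributions: since each region type yields $d_H(F(x),\tilde F(x))=O(\Delta^4)$ uniformly in $x$, the bound holds for every $x\in[a,b]$, which proves the proposition. The only delicate point is the spline-stability claim used for the scalar order of $\tilde g$ and $\tilde h$; should one wish to avoid invoking uniform boundedness of the interpolation operator, the same $O(\Delta^4)$ follows by comparing $\tilde g$ with the exact-data not-a-knot spline on $[c,d]$ and applying (\ref{notaknot}) to the latter, absorbing the knot-endpoint shift $|c-\tilde c|=O(\Delta^4)$ separately.
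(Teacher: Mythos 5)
Your proposal is correct and follows essentially the same route as the paper's proof: the same three-way decomposition into the common single-interval region, the common two-interval region handled via Lemma~\ref{Lemma:sets}, and the two $O(\Delta^4)$-wide transition strips where the half-width of the vanishing gap ($\phi$ or its approximant, both vanishing at the relevant PCT with bounded derivative) controls the Hausdorff distance. Your explicit treatment of the spline-stability issue for $\tilde g$ and $\tilde h$ (perturbed endpoint data and shifted end knot) is a point the paper's proof leaves implicit, but it is a refinement of the same argument rather than a different one.
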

\begin{proof}
Case 1: If $c < \tilde c$ and $x\in [c,\tilde c]$, then $\tilde F(x)$ has a 1-dimensional hole which is of length $|\phi(x)|$. Since $\phi(c)=0$ and $|\phi'(c)|$ is bounded, it follows from (\ref{cminusct}) that $\phi(x)=O(\Delta^4)$. By (\ref{notaknot}) we have that the approximations to $\ell$ and to $u$ are also $O(\Delta^4)$. Altogether,  
$$d_H(F(x),\Tilde{F}(x))=\max\{|\ell(x)-\tilde \ell(x)|,|u(x)-\tilde u(x)|, 0.5|\phi(x)|\}=O(\Delta^4).$$

Case 2: If $\tilde c < c$ and $x\in [\tilde c,c]$, then $\tilde F(x)$ has a 1-dimensional hole which is of length $|\psi(x)|$. Since $\psi(\tilde c)=0$ and $|\phi'(c)|$ is bounded, it follows as above that  
$$d_H(F(x),\Tilde{F}(x))=\max\{|\ell(x)-\tilde \ell(x)|,|u(x)-\tilde u(x)|, 0.5|\psi(x)|\}=O(\Delta^4).$$

Case 3: If $x<\min\{c.\tilde c\}$, or $\max\{c,\tilde c\}<x<\min\{d,\tilde d\}$, or $x>\max\{d,\tilde d\}$
$$d_H(F(x),\Tilde{F}(x))=\max\{|\ell(x)-\tilde \ell(x)|,|u(x)-\tilde u(x)|\}=O(\Delta^4).$$
The approximation near the right PCT of the hole is treated in the same manner as Cases 1 and 2.
\end{proof}
\comment{
\begin{figure}[ht]%
    \centering
    \includegraphics[width=\textwidth]{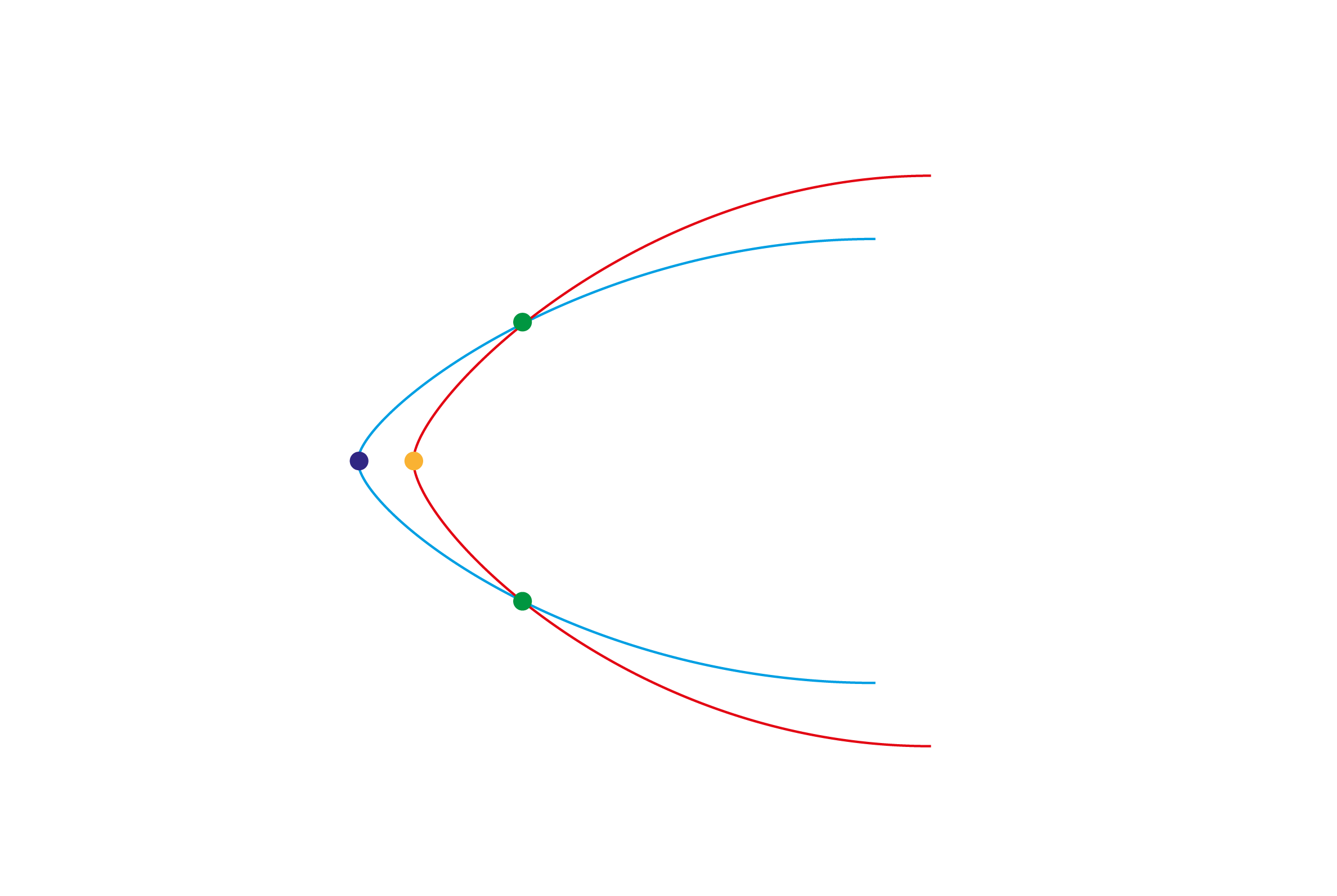}
    \qquad
    \caption{An illustration of the error of SVF interpolation near a right PCT. The blue curve can represent the boundary of the hole of $F$ and the red one that of $\Tilde{F}$ or vice versa. Note that the maximal error occurs at the yellow point, which is either the approximated PCT or the actual PCT respectively.}
    \label{fig:error_pct}
\end{figure}
}
\subsection{Numerical results}
We demonstrate the process of approximating the left PCT as well as showing the decay rate of the interpolation error on one SVF $F$ displayed in figure \ref{fig:example_B1}, which is given explicitly by,

\[ F(x)=\begin{cases} 
      \big[-e^{x},e^{x}],\qquad &x\in[-1,1]/[-x_a,x_a],\\
       \Big[-e^{x},-\frac{\cos{(3x)}}{3}\Big]\bigcup\Big[\frac{\cos{(2x)}}{2}, e^{x}\Big],\qquad &x\in[-x_a,x_a],
   \end{cases}
\]
where $-x_a$ and $x_a$ are the roots of $f(x)=\frac{\cos{(2x)}}{2}+\frac{\cos{(3x)}}{3}$.
\subsubsection{The figures}
\begin{figure}[!ht]
  \centering
  \subfloat[][The Set-Valued Function $F_A$]{\includegraphics[width=.4\textwidth]{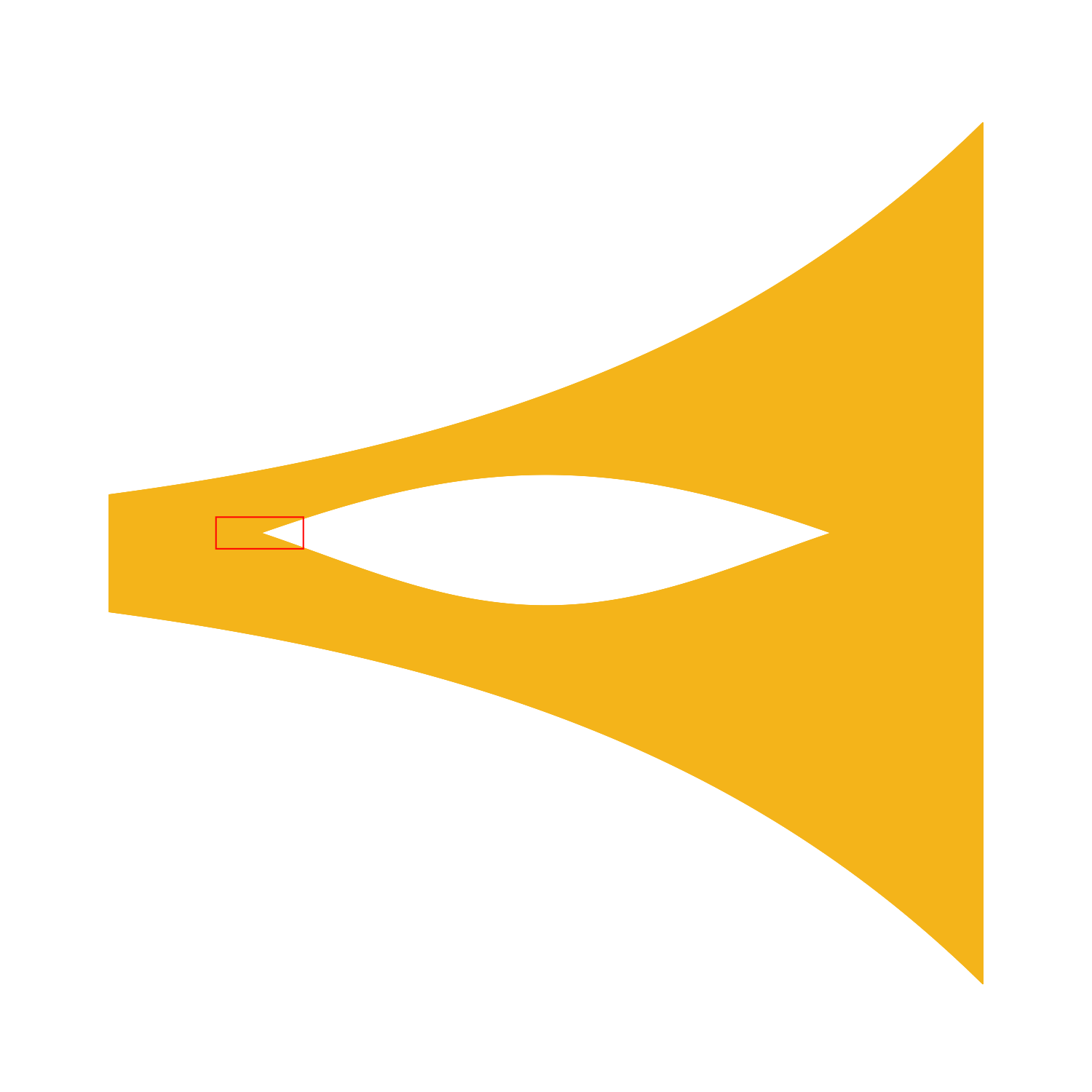}}\quad
  \subfloat[][Approximation with 10 samples]{\includegraphics[width=.4\textwidth]{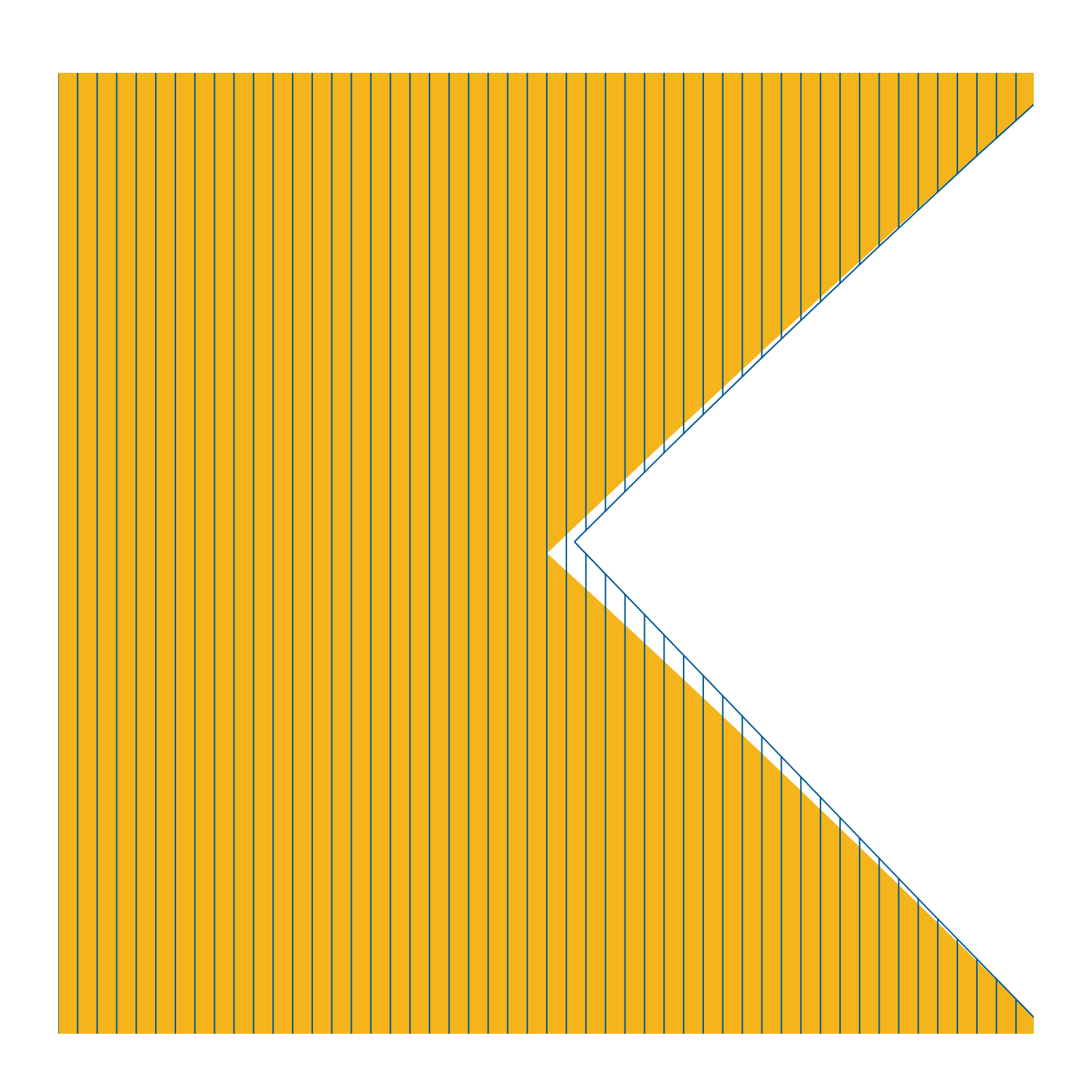}}\\
  \subfloat[][Approximation with 14 samples]{\includegraphics[width=.4\textwidth]{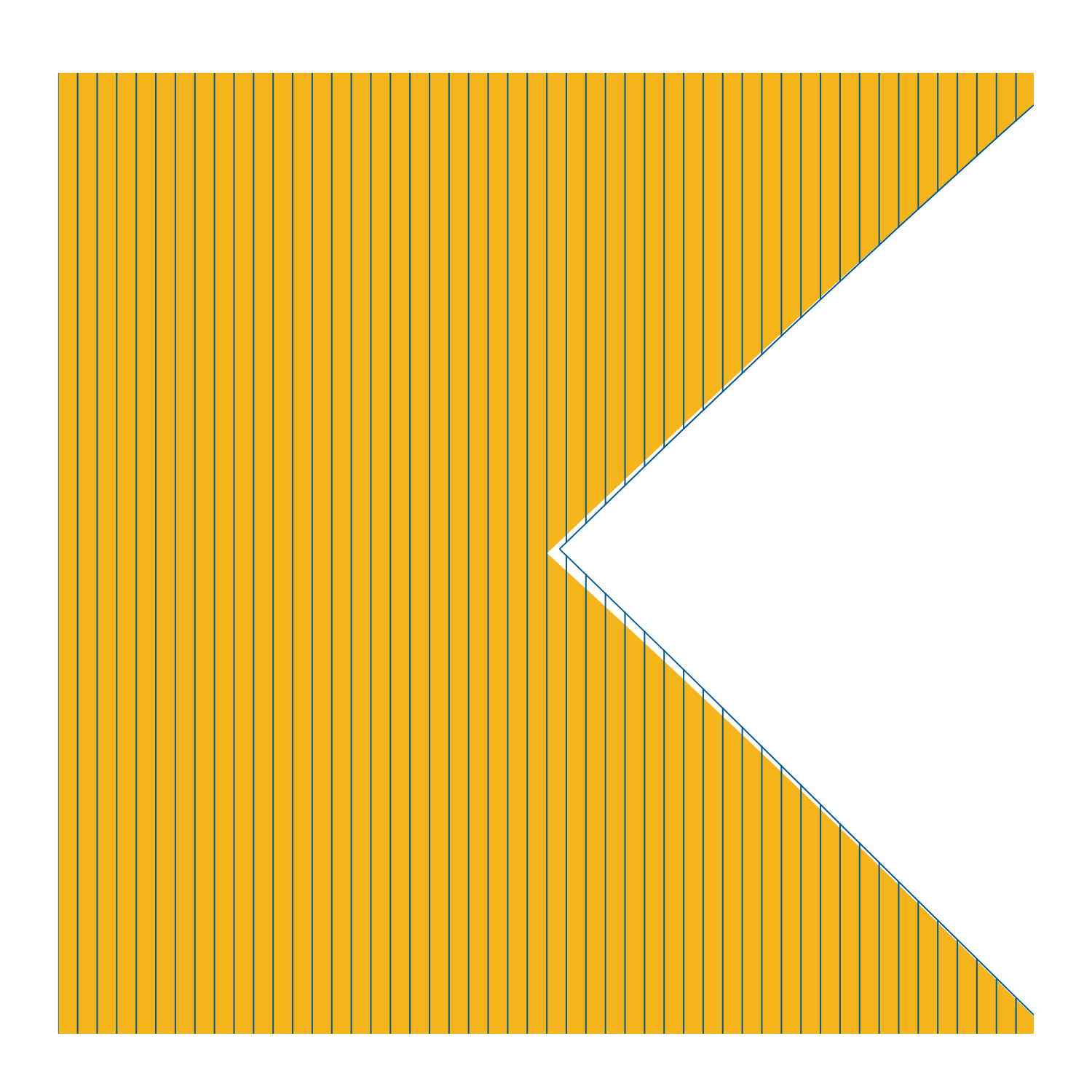}}\quad
  \subfloat[][Approximation with 18 samples]{\includegraphics[width=.4\textwidth]{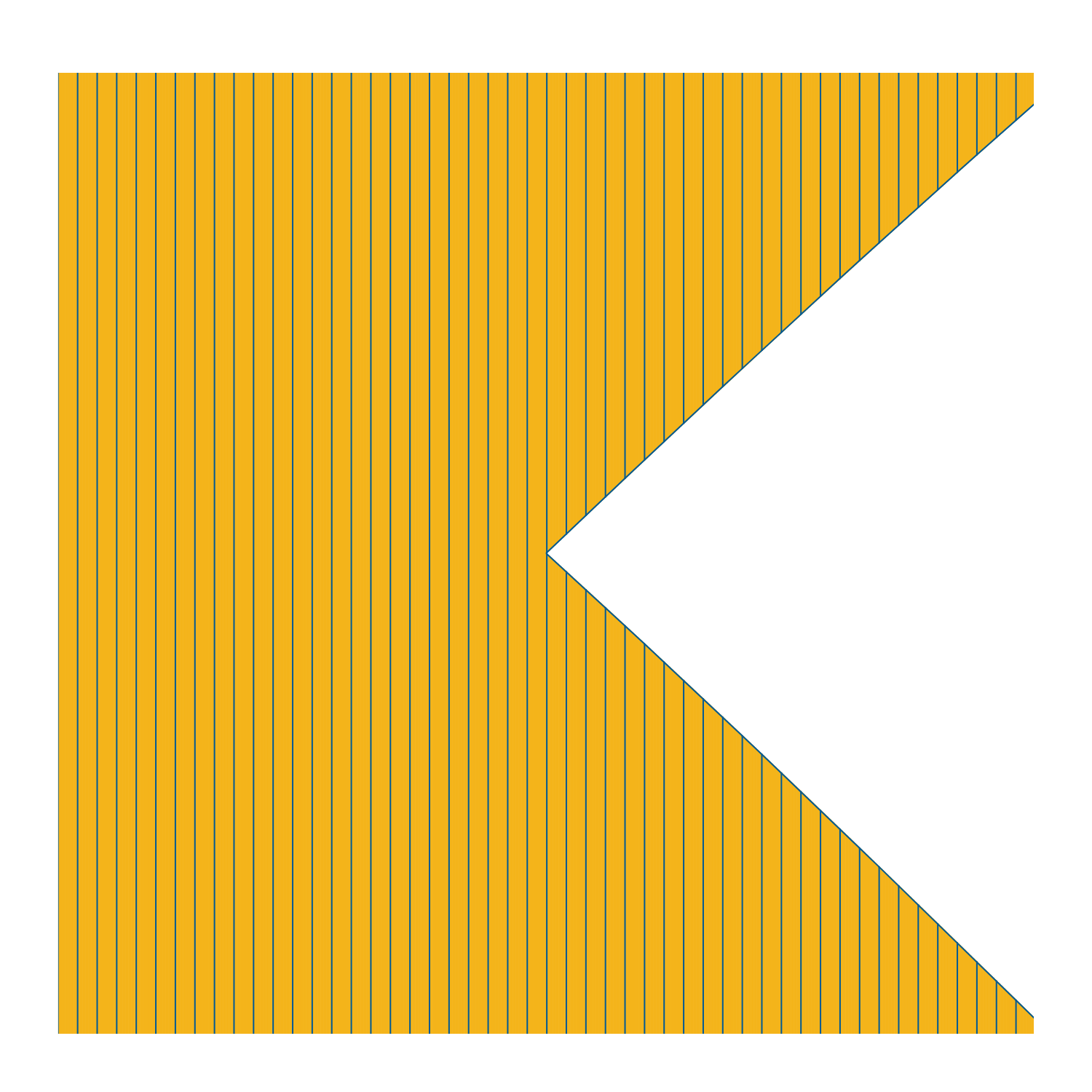}}
  \caption{The set-valued function $B$ and its approximations, zoomed in on the red rectangle in (a). Each approximation is represented by vertical blue lines, drawn on the graph of the  original function, which is colored in yellow.}
  \label{fig:example_B1}
\end{figure}
\begin{figure}[!ht]
    \centering
    \includegraphics[width=0.8\textwidth]{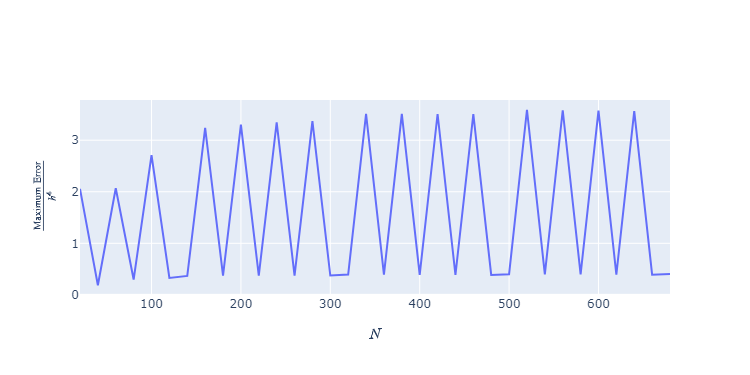}
    \caption{The interpolation error divided by $\Delta^{4}$, as a function of the number of interpolation points ($N$) for the set-valued function $F$. Here, $\Delta=\frac{b-a}{N-1}$.}
    \label{fig:maximum_error_example_B1}
\end{figure}
\begin{figure}[!ht]
    \centering
    \includegraphics[width=0.8\textwidth]{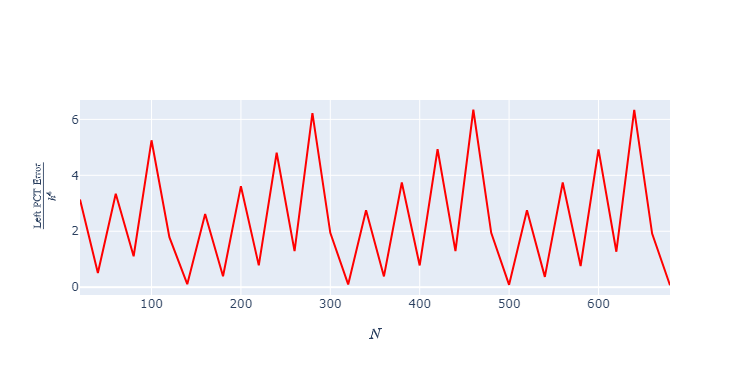}
    \caption{The error in the approximation of the left PCT divided by $\Delta^{4}$, as a function of the number of interpolation points ($N$) for the set-valued function $F$. Here, $\Delta=\frac{b-a}{N-1}$.}
    \label{fig:pct_error_example_B1}
\end{figure}

Figure \ref{fig:example_B1} correspond to the above SVF,  consists of four sub-figures. The first sub-figure shows the graph of the original function with the close-up view area near the left PCT bounded by a red rectangle. The last three sub-figures show a zoomed in view of three interpolants corresponding to different numbers of interpolation points $N$. Each interpolant is represented by vertical blue lines, drawn on the graph of the original function, which is colored in yellow.

We show the rate of decay of the interpolation error for the above SVF in figures \ref{fig:maximum_error_example_B1}. The error is measured by
$$
\text{Maximum Error}=\max_{j}{\Big\{d_{H}\big(F(\xi_{j}),\Tilde{F}(\xi_{j})\big)\Big\}},
$$
where $\{\xi_{j}\}_{j=1}^{400}$ is a set of $400$ equidistant points in $[a,b]$. We plot 
$$
G=\frac{\text{Maximum Error}}{\Delta^{4}},
$$
with $\Delta=\frac{b-a}{N-1}$, as a function of the number of the interpolation points $N$.

Finally, we show the rate of decay of the error of approximating the left PCT of the above SVF in figures \ref{fig:pct_error_example_B1}. We plot the value of 
$$
\tilde{E}=\frac{E_{k}}{\Delta^{4}},
$$
as a function of the number of the interpolation points $N$. Here, $E$ is the error of the approximation of the left PCT as in (\ref{eq:B_pct_error_expression}).
\subsubsection{Conclusions from the figures}
\begin{enumerate}
\item Figure \ref{fig:example_B1} demonstrates that the PCT approximation error decreases as $N$ increases in accordance with the theoretical rate proved in Proposition \ref{prop:pct_estimate_b2}.
\item  As seen from figures \ref{fig:maximum_error_example_B1} and \ref{fig:pct_error_example_B1}, the maximal error and the PCT approximation error both divided by $\Delta^{4}$ are bounded as $N$ increases. This indicates that the errors decays at the rate as predicted by Propositions \ref{prop:pct_estimate_b2} and \ref{prop:error_estimate_b}. 
\end{enumerate}

\subsection{Error analysis - $M$ holes}\label{Mholes}
For $F\in\mathcal{F}([a,b],M)$ (Defined in Section \ref{pre_set_svf}),
the SVF approximation $\tilde F(x)$ is defined as follows: 

For each hole $H_i$ we apply the approximation algorithm described in Section \ref{Algorithm5} for the case of one hole. The outcome includes approximations $\tilde h_i\sim h_i$, $\tilde g_i\sim g_i$ on an interval $[\tilde c_i,\tilde d_i]$ approximating the interval $[c_i,d_i]$.  In order to compare between $h_i$ and $\tilde h_i$ we extend each of them to a larger interval $[c_{e,i},d_{e,i}]$, $c_{e,i}=c_i-\epsilon$, $d_{e,i}=d_i+\epsilon$. The extensions are defined as in equations (\ref{he1}), (\ref{tildehe1}), and they are denoted $h_{e,i}$ and  $\tilde h_{e,i}$. Analogously, we define extensions $g_{e,i}$ and  $\tilde g_{e,i}$ of $g_i$ and $\tilde g_i$.

\begin{equation}\label{he1}
    h_{e,i}(x) = 
     \begin{cases}
       h_i(c) &\quad x\in [c_{e,i},c_i)\\
       h_i(x) &\quad x\in [c_i,d_i]\\ 
       h_i(d) &\quad x\in (d_i,d_{e,i}]\\
     \end{cases},
\end{equation}

\begin{equation}\label{tildehe1}
    \tilde h_{e,i}(x) = 
     \begin{cases}
       \tilde h_i(\tilde c_i) &\quad x\in [c_{e,i},\tilde c_i)\\
       \tilde h_i(x) &\quad x\in [\tilde c_i,\tilde d_i]\\
       \tilde h_i(\tilde d_i) &\quad x\in (\tilde d_i,d_{e,i}]\\
     \end{cases}.
\end{equation}

By the assumptions on the holes, there exists an $\epsilon$ such that the extensions do not cross the boundaries of $Graph(F)$. Moreover, for a small enough $\Delta$ the interval $[c_{e,i},d_{e,i}]$ contains both intervals $[\tilde c_i,\tilde d_i]$ and $[c_i,d_i]$.

Similarly to the expression for $F$ in (\ref{Fatx0}),
$F$ may be re-formulated using the above extended boundary functions, as follows:
For $x\in [a,b]$ we identify all the intervals $\{[c_{e,i},d_{e,i}]\}_{i\in I(x)} $ containing $x$. If $\#I(x)=J(x)>0$, we order the corresponding boundary values $\{g_{e,i}(x)\}$, $i\in I(x)$, in ascending order, and index the relevant holes according to this ordering $\{H_{i_j}\}_{j=1}^{J(x)}.$ The set $F(x)$ is then re-expressed as
\begin{equation}\label{Fatxe4}
F(x)=[\ell(x),g_{i_1}(x)]\cup\bigcup_{j=1}^{J(x)-1}[g_{i_j}(x),h_{i_{j+1}}(x)]\cup[h_{i_J}(x),u(x)].
\end{equation}

The approximation $\tilde F(x)$ is similarly defined as
\begin{equation}\label{tildeFatxe4}
\tilde F(x)=[\tilde \ell(x),\tilde g_{e,i_1}(x)]\cup\bigcup_{j=1}^{J(x)-1}[\tilde g_{e,i_j}(x),\tilde h_{e,i_{j+1}}(x)]\cup[\tilde h_{e,i_J}(x),\tilde u(x)].
\end{equation}

\begin{theorem}\label{TheoremC4}
Let $F$ be an SVF such that $Graph(F)$ has separable $M$ holes with $C^{4}$ boundaries. Defining the approximation by (\ref{tildeFatxe4}), for a small enough $\Delta$,
\begin{equation}
d_H(F(x),\tilde F(x))\le C\Delta^4.
\end{equation}
\end{theorem}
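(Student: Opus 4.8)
The plan is to reproduce the two-level structure used in the proof of Theorem~\ref{Thm1}: first reduce the $M$-hole estimate to the single-hole estimate already established in Proposition~\ref{prop:error_estimate_b}, and then patch the local estimates together through the common representations of $F$ and $\tilde F$ given in (\ref{Fatxe4}) and (\ref{tildeFatxe4}). The essential difference from the Lipschitz case is that every local ingredient now decays like $\Delta^4$ rather than $\log(N)/N$: the ``not-a-knot'' cubic-spline error (\ref{notaknot}) is $O(\Delta^4)$ for the $C^4$ boundary functions $\ell,u,g_i,h_i$, and by Proposition~\ref{prop:pct_estimate_b2} each approximated PCT lies within $O(\Delta^4)$ of the true one.

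First I would fix $x\in[a,b]$ and observe that, by the separability of the holes together with the fact that for small enough $\Delta$ each approximated interval $[\tilde c_i,\tilde d_i]$ lies inside the extended interval $[c_{e,i},d_{e,i}]$, the index set $I(x)$ and hence the number of summands $J(x)$ is identical in (\ref{Fatxe4}) and (\ref{tildeFatxe4}). This produces a canonical one-to-one correspondence between the intervals composing $F(x)$ and those composing $\tilde F(x)$: the interval $[\ell(x),g_{i_1}(x)]$ is paired with $[\tilde\ell(x),\tilde g_{e,i_1}(x)]$, each $[g_{i_j}(x),h_{i_{j+1}}(x)]$ with $[\tilde g_{e,i_j}(x),\tilde h_{e,i_{j+1}}(x)]$, and $[h_{i_J}(x),u(x)]$ with $[\tilde h_{e,i_J}(x),\tilde u(x)]$.

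Next I would bound the Hausdorff distance between each corresponding pair. Since the Hausdorff distance between two real intervals is the maximum of the distances between their respective endpoints, this reduces to estimating $|\ell(x)-\tilde\ell(x)|$, $|u(x)-\tilde u(x)|$, $|g_i(x)-\tilde g_{e,i}(x)|$ and $|h_i(x)-\tilde h_{e,i}(x)|$. For $\ell$ and $u$ the bound $O(\Delta^4)$ is immediate from (\ref{notaknot}). For the hole boundaries I would invoke the single-hole analysis of Proposition~\ref{prop:error_estimate_b} verbatim on each hole: away from the PCTs both functions are genuine cubic-spline interpolants of $C^4$ data, so the gap is $O(\Delta^4)$ by (\ref{notaknot}); in the transition zone between a true PCT $c_i$ and its approximation $\tilde c_i$ (Cases~1 and~2 of that proposition) the relevant interval of one of $F,\tilde F$ collapses, and the discrepancy is controlled by $|\phi(x)|$ or $|\psi(x)|$, which is $O(\Delta^4)$ because of the PCT estimate (\ref{cminusct}) together with the boundedness of $\phi'$ near $c_i$.

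Finally I would assemble these $J(x)+1$ local bounds. Lemma~\ref{Lemma:sets} is stated for a union of two sets, but a routine induction extends it to $d_H\!\left(\bigcup_k A_k,\bigcup_k B_k\right)\le\max_k d_H(A_k,B_k)$; applying this to the unions (\ref{Fatxe4}) and (\ref{tildeFatxe4}) yields $d_H(F(x),\tilde F(x))\le C\Delta^4$ uniformly in $x$, which is the claim. I expect the only genuinely delicate point to be the bookkeeping of the correspondence near the PCTs: one must verify that the collapsing of a degenerate interval in one representation is always matched by a short interval in the other, so that the pairwise Hausdorff distances remain $O(\Delta^4)$ rather than $O(1)$. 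This is exactly what the Case~1/Case~2 argument of Proposition~\ref{prop:error_estimate_b} secures for a single hole, and the separability hypothesis guarantees that these transition zones belong to disjoint extended intervals, so the single-hole control does not degrade when several holes are present.
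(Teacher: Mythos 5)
Your proposal is correct and follows essentially the same route as the paper's proof: pair each interval of $F(x)$ in (\ref{Fatxe4}) with the corresponding interval of $\tilde F(x)$ in (\ref{tildeFatxe4}), bound each pairwise Hausdorff distance by $O(\Delta^4)$ via Proposition~\ref{prop:error_estimate_b}, and conclude with Lemma~\ref{Lemma:sets}. You in fact supply details the paper leaves implicit, namely the inductive extension of Lemma~\ref{Lemma:sets} to more than two sets and the Case~1/Case~2 bookkeeping in the PCT transition zones.
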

\begin{proof}
Each interval in (\ref{Fatxe4}) has a corresponding interval in (\ref{tildeFatxe4}), and by Proposition \ref{prop:error_estimate_b} it is clear that the Hausdorff distance between corresponding intervals is of order
$O(\Delta^4)$ as $\Delta\to 0$.
The proof is completed using the result in Lemma \ref{Lemma:sets}.
\end{proof}

\section{Set-Valued Functions with Holes of H\"older Type}\label{Holderboundaries}
\subsection{Introduction}
So far we dealt with SVFs of Lipschitz type, i.e., with holes defined by Lipschitz continuous boundary functions (see Theorem \ref{th:lip_svf}). In this section, we extend
our algorithm to deal with SVFs whose holes have upper and lower boundaries of H\"older type with H\"older exponent $\frac{1}{2}$ at both PCTs. Here again, we present the computation procedure and the error analysis for the case of an SVF with one hole defined on the interval $[c,d]\subset[a,b]$. Finally, we show how to deal with the case of several holes. We assume the hole is defined as the interior of a closed boundary curve $\Gamma\in C^{2k},\ k\in \mathbb{N}$, such that every vertical cross-section at $x\in(c,d)$ cuts the curve at two points as in Figure \ref{fig:example_C}. We further assume that $\Gamma$ has non-zero curvature at both PCTs. As before, we let the upper and lower boundaries of the hole be defined by the functions $h$ and $g$ respectively.

In the next lemma we examine the behavior of $h$ and $g$ near the left PCT, namely at $(c,h(c))$.

\begin{definition}{\bf Local series approximation}
\hfill

\medskip
Let $\{\alpha_j\}_{j=0}^J$, $\alpha_0\ge 0$, be an increasing real sequence. We say that $\sum_{j=0}^Ja_j(x-c)^{\alpha_j}$ is a local series approximation of $f(x)$ at $x=c$ if
$$|f(x)-\sum_{j=0}^ka_j(x-c)^{\alpha_j}|=o(|x-c|^{\alpha_{k}}),\ \ \text{as}\ x\to c.$$
\end{definition}

The local series approximation concept is compatible with the asymptotic expansion concept \cite{Dingle}

\begin{lemma}{\bf Local series approximations of $h$ and $g$}
\hfill

\medskip
Consider a hole with a $C^{2k}$ boundary $\Gamma$ and assume $\Gamma$ has non-zero curvature at the PCT's of the hole. Then $h$ and $g$ have a local series in powers of $(x-c)^{0.5}$.
\end{lemma}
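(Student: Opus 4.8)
The plan is to reduce the statement to a local analysis of the curve $\Gamma$ near the left PCT $(c,h(c))=(c,g(c))$; the right PCT is handled identically. Since $\Gamma\in C^{2k}$ is a regular closed curve it admits a regular $C^{2k}$ parametrization $\gamma(t)=(X(t),Y(t))$ with $\gamma'(t)\neq 0$, which I normalize so that $\gamma(0)=(c,h(c))$. First I would show that the tangent to $\Gamma$ at the PCT is vertical: because every vertical cross-section at $x\in(c,d)$ meets $\Gamma$ in exactly two points which both tend to the PCT as $x\to c^{+}$, the curve cannot have a non-vertical tangent there, so $X'(0)=0$ and $Y'(0)\neq 0$.

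Next I would invoke the non-zero curvature hypothesis. For a regular planar curve the signed curvature at $t=0$ is proportional to $X'(0)Y''(0)-Y'(0)X''(0)=-Y'(0)X''(0)$; since $Y'(0)\neq 0$ and $\kappa(0)\neq 0$, this forces $X''(0)\neq 0$. Geometrically $\Gamma$ then looks near the PCT like a parabola opening to the right. By Taylor's theorem (Hadamard's lemma) I can factor
\begin{equation}
X(t)-c=t^{2}\phi(t),\qquad \phi(0)=\tfrac12 X''(0)\neq 0,\ \ \phi\in C^{2k-2},
\end{equation}
and $\phi(0)>0$ because the hole lies to the right of its left PCT, so $x-c=t^{2}\phi(t)>0$. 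Since $\phi(0)>0$, the function $\sqrt{\phi(t)}$ is itself $C^{2k-2}$ in a neighborhood of $t=0$.

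The crux is to invert the relation $x=X(t)$, which is singular at the PCT: its derivative vanishes there and it is two-to-one, so the ordinary inverse function theorem does not apply. The key device is to desingularize by passing to the variable $s=(x-c)^{1/2}$. On the upper branch $t>0$ one has $s=t\sqrt{\phi(t)}=:\Psi(t)$, where $\Psi\in C^{2k-2}$, $\Psi(0)=0$ and $\Psi'(0)=\sqrt{\phi(0)}\neq 0$; hence $\Psi$ is a $C^{2k-2}$ diffeomorphism near $0$ and $t=\Psi^{-1}(s)$ is $C^{2k-2}$ in $s$. Consequently
\begin{equation}
h(x)=Y\big(\Psi^{-1}(s)\big)=(Y\circ\Psi^{-1})\big((x-c)^{1/2}\big)
\end{equation}
is a $C^{2k-2}$ function of $s=(x-c)^{1/2}$; the lower branch $t<0$ yields the same conclusion for $g$ after replacing $t$ by $-t$.

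Finally I would Taylor-expand the $C^{2k-2}$ function $Y\circ\Psi^{-1}$ about $s=0$ to order $2k-2$, obtaining $\sum_{j=0}^{2k-2}b_{j}s^{\,j}+o(s^{2k-2})$. Re-substituting $s=(x-c)^{1/2}$ gives $\sum_{j=0}^{2k-2}b_{j}(x-c)^{j/2}+o\big((x-c)^{(2k-2)/2}\big)$, which is exactly a local series approximation of $h$ in powers of $(x-c)^{1/2}$ with exponents $\alpha_{j}=j/2$, and likewise for $g$. The main obstacle, and the heart of the argument, is the singular inversion at the PCT; everything hinges on the factorization $X(t)-c=t^{2}\phi(t)$ with $\phi(0)\neq 0$, which is precisely where the non-zero curvature assumption enters and which converts the square-root singularity into a genuine smooth change of variable.
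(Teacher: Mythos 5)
Your argument is correct and is essentially the same as the paper's: both proofs exploit the vertical tangent and non-zero curvature at the PCT to show that the horizontal displacement has a nondegenerate quadratic zero along $\Gamma$, and both desingularize by taking its square root and inverting, so that $h$ becomes a smooth function of $s=(x-c)^{1/2}$. The only cosmetic difference is that you use a general regular parametrization $\gamma(t)$ and the inverse function theorem, while the paper parametrizes the branch by $y$ itself (reflecting about $y=x$ and working with $h^{-1}$) and invokes series reversion; choosing $t=y$ in your argument recovers theirs.
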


\begin{proof}
W.l.o.g., we assume $c=0$ and $h(c)=0$.
Reflecting $\Gamma$ about the line $y=x$, and using the non-zero curvature assumption at the PCT, it follows that the inverse function $h^{-1}$ has the local power series expansion:
$$h^{-1}(y)\sim \sum_{j=2}^{2k} a_jy^j, \ a_2> 0.$$
Let $\psi(y)=\sqrt{h^{-1}(y)}$, then $\psi(0)=0$ and $\psi'(0)=\sqrt{a_{2}}>0$. By the series reversion formula (see  Abramowitz and Stegun 1972, p. 16 \cite{AbStegun}), $\psi(y)$ is invertible in a neighborhood of $y=0$, and it has a local power series expansion of the form
$$\psi^{-1}(x)=\sum_{j=1}^{2k} b_jx^j .$$
Using the relation $(u\circ v)^{-1}=v^{-1}\circ u^{-1}$, with $u(t)=sqrt(t)$ and $v=h^{-1}$, we obtain
$$\psi^{-1}(x)=h(x^2),$$
which implies, for $x\ge 0$,
$$h(x)=\psi^{-1}(\sqrt{x})=\sum_{j=1}^{2k} b_j x^{j/2} .$$

\end{proof}

Altogether, it follows that
that $h,g\in C^{2k}(c,d)$. With local expansions near $c$ and $d$ of the form, 
\begin{equation}\label{sqrtexp1}
h(x)\sim h_c^{[2k]}(x)=\sum_{j=0}^{2k} c_j (x-c)^{j/2}, \quad \text{ as } x \to c^{+}, 
\end{equation}
and
\begin{equation}\label{sqrtexp2}
h(x)\sim h_d^{[2k]}(x)=\sum_{j=0}^{2k} d_j |x-d|^{j/2}, \quad \text{ as } x \to d^{-}.
\end{equation}

\begin{corollary}\label{Coro51}
\begin{equation}\label{c2kclosedcd}
\hat h^{[2k]}\equiv h- h_c^{[2k]}-h_d^{[2k]}\in C^{k}[c,d].
\end{equation}
\end{corollary}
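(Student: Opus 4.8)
The plan is to localize. On the open interval $(c,d)$ we already know $h\in C^{2k}$, while both half-power blocks $h_c^{[2k]}(x)=\sum_{j=0}^{2k}c_j(x-c)^{j/2}$ and $h_d^{[2k]}(x)=\sum_{j=0}^{2k}d_j|x-d|^{j/2}$ are $C^\infty$ there, since in the interior the argument of every fractional power stays positive. Hence $\hat h^{[2k]}\in C^{2k}(c,d)\subset C^k(c,d)$ for free, and it suffices to show that $\hat h^{[2k]}$ extends to a $C^k$ function up to each endpoint. By symmetry I treat only the left endpoint $c$. Near $c$ the block $h_d^{[2k]}$ is $C^\infty$ (its argument $|x-d|$ is bounded away from $0$), so the singular behaviour of $\hat h^{[2k]}$ at $c$ is governed entirely by the remainder $R_c:=h-h_c^{[2k]}$, and it is enough to prove $R_c\in C^k[c,c+\delta)$ for a small $\delta$.

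The engine is the representation produced in the preceding Lemma: after the reflection and series-reversion argument one has, near $c$, $h(x)=\Phi\big((x-c)^{1/2}\big)$ for a function $\Phi$ smooth at $0$, and the block $h_c^{[2k]}$ is precisely the Taylor polynomial of $\Phi$ in the variable $w=(x-c)^{1/2}$, i.e.\ $c_j=\Phi^{(j)}(0)/j!$, matching the local expansion (\ref{sqrtexp1}). Consequently $R_c(x)=G(w)$ with $G:=\Phi-T_{2k}\Phi$ the Taylor remainder, so that $G$ vanishes with its derivatives at $w=0$ and $G^{(s)}(w)=o(w^{2k-s})$ for the relevant orders $s$. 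I would then differentiate $R_c(x)=G\big((x-c)^{1/2}\big)$ in $x$ by Fa\`a di Bruno's formula: a typical term is $G^{(s)}(w)\prod_i\big(w^{(i)}\big)^{k_i}$ with $\sum_i k_i=s$ and $\sum_i i\,k_i=r$. Since $w^{(i)}$ is a constant multiple of $w^{\,1-2i}$, the product contributes the power $w^{\,s-2r}$, and combined with $G^{(s)}(w)=o(w^{2k-s})$ every term of $\tfrac{d^r}{dx^r}R_c$ is $o\big(w^{\,2k-2r}\big)=o\big((x-c)^{k-r}\big)$. Note that the individually divergent derivatives of the odd half-powers in $h$ are exactly the ones cancelled inside $R_c$ by $h_c^{[2k]}$. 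Thus for each $r\le k$ the one-sided limit $\lim_{x\to c^+}R_c^{(r)}(x)$ exists and equals $0$.

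Finally I would upgrade ``all derivatives up to order $k$ have finite one-sided limits at $c$'' to genuine $C^k$ regularity on the closed interval by the standard one-dimensional lemma: if a function is $C^k$ on $(c,c+\delta)$, continuous at $c$, and each $R_c^{(r)}$ with $r\le k$ has a finite limit as $x\to c^+$, then $R_c\in C^k[c,c+\delta)$ with $R_c^{(r)}(c)$ equal to that limit, which follows by repeated application of the mean value theorem and the fundamental theorem of calculus. Running the mirror-image argument at $d$ with $w=|x-d|^{1/2}$ and the block $h_d^{[2k]}$, and recalling the interior regularity, yields $\hat h^{[2k]}\in C^k[c,d]$, as asserted.

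I expect the genuinely delicate step to be the smoothness bookkeeping in the Fa\`a di Bruno estimate, equivalently confirming that $\Phi$ inherits enough derivatives from $\Gamma\in C^{2k}$ for the remainder identity $G^{(s)}(w)=o(w^{2k-s})$ to persist through order $s\le k$. The reflection, the Hadamard-type division by $w^2$, and the series reversion each interact with the derivative count, so the careful point is to check that exactly the $2k$ subtracted half-powers annihilate the singular part of $h^{(r)}$ for every $r\le k$ while leaving a remainder of nonnegative order; all the fractional-power derivatives that blow up individually must be precisely those removed by $h_c^{[2k]}$ and $h_d^{[2k]}$.
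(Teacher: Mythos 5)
The paper states Corollary~\ref{Coro51} with no proof at all, so your writeup is not competing with an argument in the text but supplying the one the authors evidently intend. Your architecture is the right one and runs on exactly the machinery of the preceding lemma: localize away $h_d^{[2k]}$ (which is analytic near $c$ since $|x-d|$ is bounded away from zero there), write $h=\Phi(w)$ with $w=(x-c)^{1/2}$ and $\Phi=\psi^{-1}$ from the reflection/reversion argument, identify $h_c^{[2k]}$ with the degree-$2k$ Taylor polynomial of $\Phi$ at $w=0$, and control $\tfrac{d^r}{dx^r}\bigl(h-h_c^{[2k]}\bigr)$ by Fa\`a di Bruno. The exponent count is correct: $w^{(i)}=C_i\,w^{1-2i}$ makes the derivative product contribute $w^{s-2r}$, so with $G^{(s)}(w)=o(w^{2k-s})$ each term is $o\bigl((x-c)^{k-r}\bigr)$, the one-sided limits of the first $k$ derivatives exist at $c$, and the standard mean-value upgrade gives $C^k$ up to the endpoint.

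The step you flag but leave open is, however, a genuine gap and not mere bookkeeping. The hypothesis is $\Gamma\in C^{2k}$, i.e.\ $h^{-1}\in C^{2k}$ with a double zero and $a_2>0$; the Hadamard-type factorization $h^{-1}(y)=y^2\rho(y)$ only yields $\rho\in C^{2k-2}$, hence $\psi(y)=y\sqrt{\rho(y)}$ and $\Phi=\psi^{-1}$ are, by a naive derivative count, only $C^{2k-2}$ at the origin. Feeding $G^{(s)}(w)=o(w^{2k-2-s})$ into your computation then delivers $\hat h^{[2k]}\in C^{k-1}[c,d]$, one order short of the claim. To recover the full $C^k$ one must use the refined tail estimates $r^{(s)}(y)=o(y^{2k-2-s})$ for the remainder $r(y)=R_{2k}(y)y^{-2}$ of $\rho$ (these do hold, by Leibniz applied to $R_{2k}^{(i)}(y)=o(y^{2k-i})$ times derivatives of $y^{-2}$) and propagate them through the square root and the series reversion, so that the derivatives of $\Phi$ admit the order-$2k$ asymptotics even though $\Phi$ is not literally $C^{2k}$. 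Since the paper asserts the corollary without justification, your proposal is more complete than the source; but as written it proves $C^{k-1}$ rather than $C^k$ unless this last propagation is carried out.
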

Similar local expansion is assumed for $g$.
For example, circular or elliptic holes fulfill these conditions. 
In this section, we develop an algorithm for constructing high order approximations to SVFs with holes of the above type. We remark here that the algorithm suggested in \cite{Levin1986} fails in approximating such holes in the neighborhood of the PCTs. 

{\bf The approximation procedure} starts with deriving a high order approximation to the PCTs.
Next, this information is used for computing local approximations of the form (\ref{sqrtexp1}) for the upper and lower boundary functions $g$ and $h$ near the PCTs. Afterwards, in view of (\ref{c2kclosedcd}), we subtract  these local approximations in order to regularize the given data of $h$ and $g$. Finally, a spline approximation is applied to the regularized data, and the final approximation is obtained by adding those previously subtracted local approximations at the PCTs. 
\begin{figure}[ht]
    \centering{{\includegraphics[width=5cm]{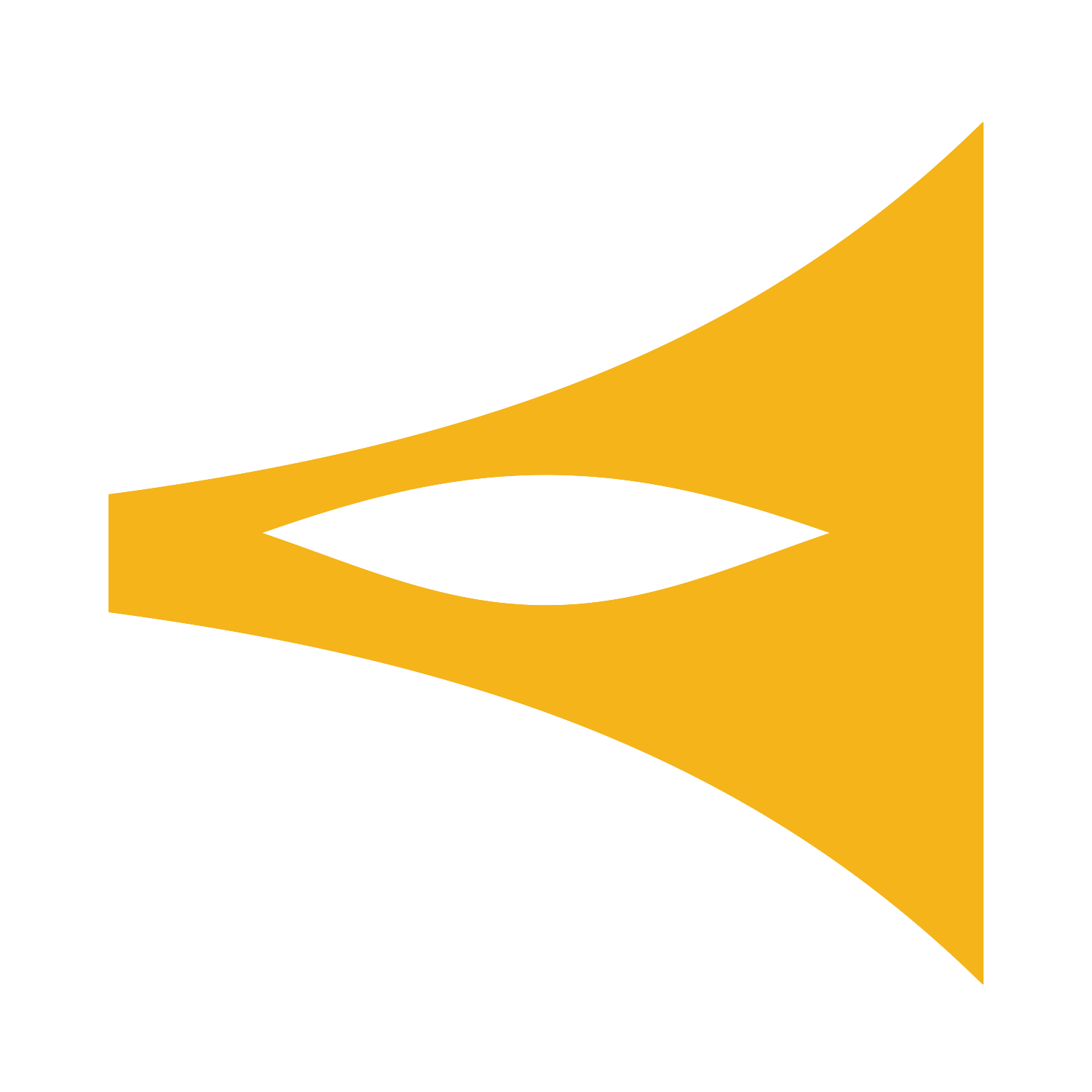} }}
    \qquad{{\includegraphics[width=5cm]{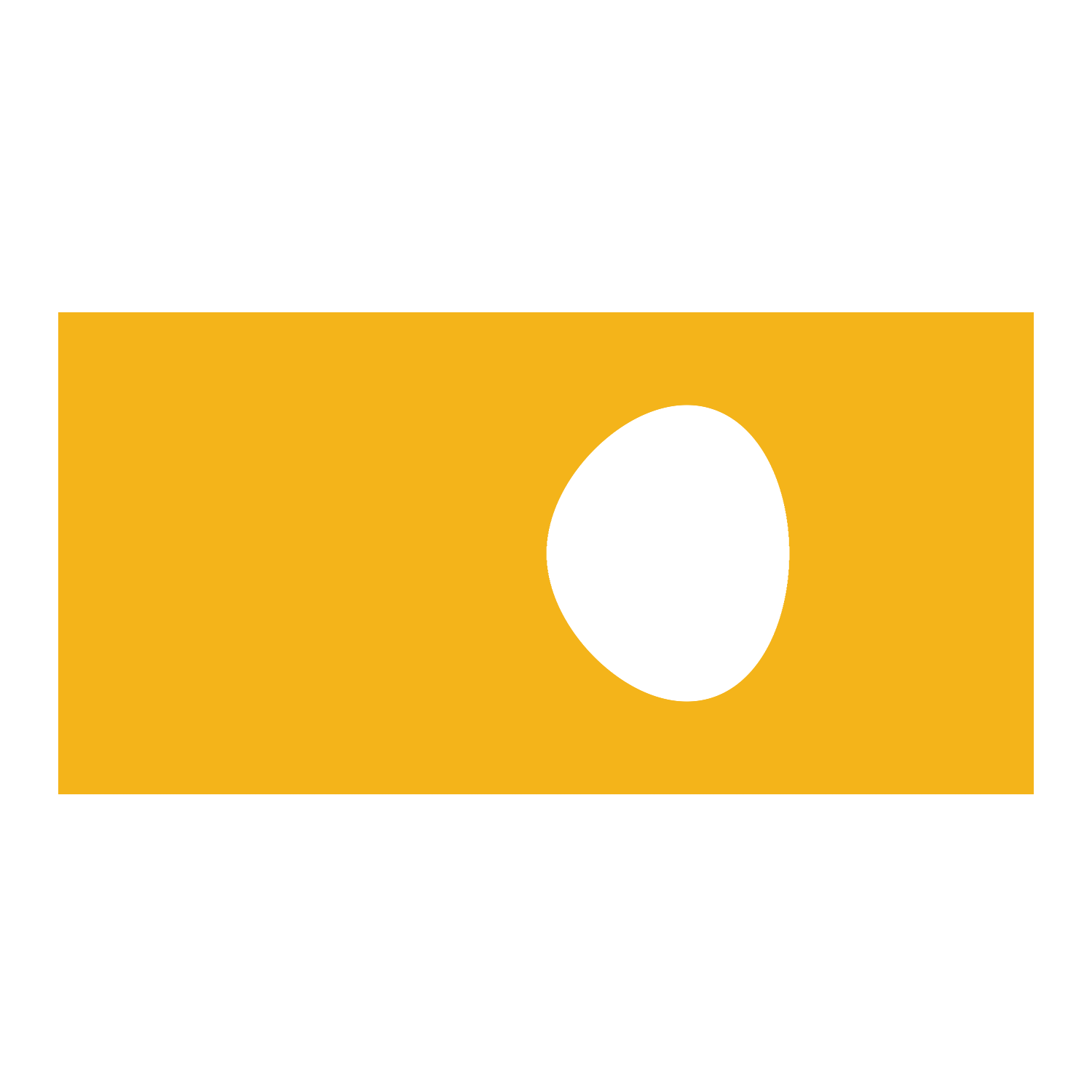} }}
    \caption{An illustration of two SVFs one with a Ho\"lder type hole (right) and one with a Lipschitz type hole (left).}%
    \label{fig:example_C}
\end{figure}
\subsection{A Description of The Algorithm}\label{Algorithm5}
We consider a set-valued function $F$ with only one hole $H$, and samples given at equally spaced points $X=\big\{{x_i}\big\}_{i=0}^{N}$, where $x_{i}=a+i\Delta,\ \Delta=\frac{b-a}{N}$. As in the previous section, we let $\{x_i\}_{i=n}^{m}$ be all the sample points in $(c,d)$. We assume that $m-n > 2k$. 

The first two steps of the algorithm are identical to the corresponding steps in the description of the algorithm of the previous chapter.
\begin{enumerate}
\item \textbf{Approximating the functions $u$ and $\ell$ describing the lower and the upper boundaries of the graph of $F$}

As in the previous chapter.

\item \textbf{Identifying the hole}

As in the previous chapter.
 \item \textbf{Approximating the right and left PCTs of $H$}:
    
In order to approximate the location of the left PCT, we swap between the $x-$coordinate and $y-$coordinate of the data points near the PCT by reflecting the graph of the hole across the line $y=x$ (see Figure \ref{fig:reflection}). Afterwards, we find a polynomial $p_{2k-1}(y)$, which interpolates the points 
\begin{align*}
    \Big\{\big(g(x_{i}),x_{i}\big)\Big\}_{j=n}^{n+k-1}\bigcup\Big\{\big(h(x_{i}),x_{i}\big)\Big\}_{j=n}^{n+k-1}.
\end{align*}
\begin{figure}[ht]
    \centering
    \subfloat[\centering Left side of a hole]{{\includegraphics[width=5cm]{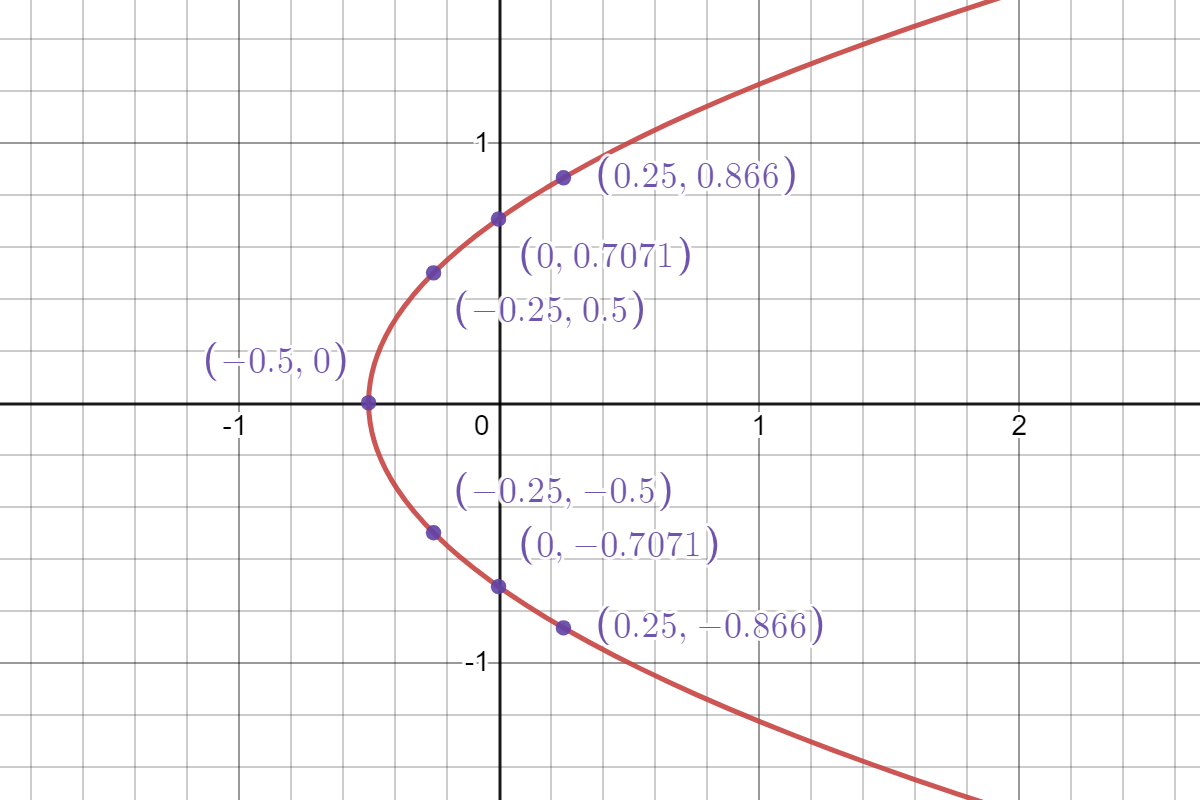}}}
    \qquad
    \subfloat[\centering Left side of a hole after the reflection across $y=x$]{{\includegraphics[width=5cm]{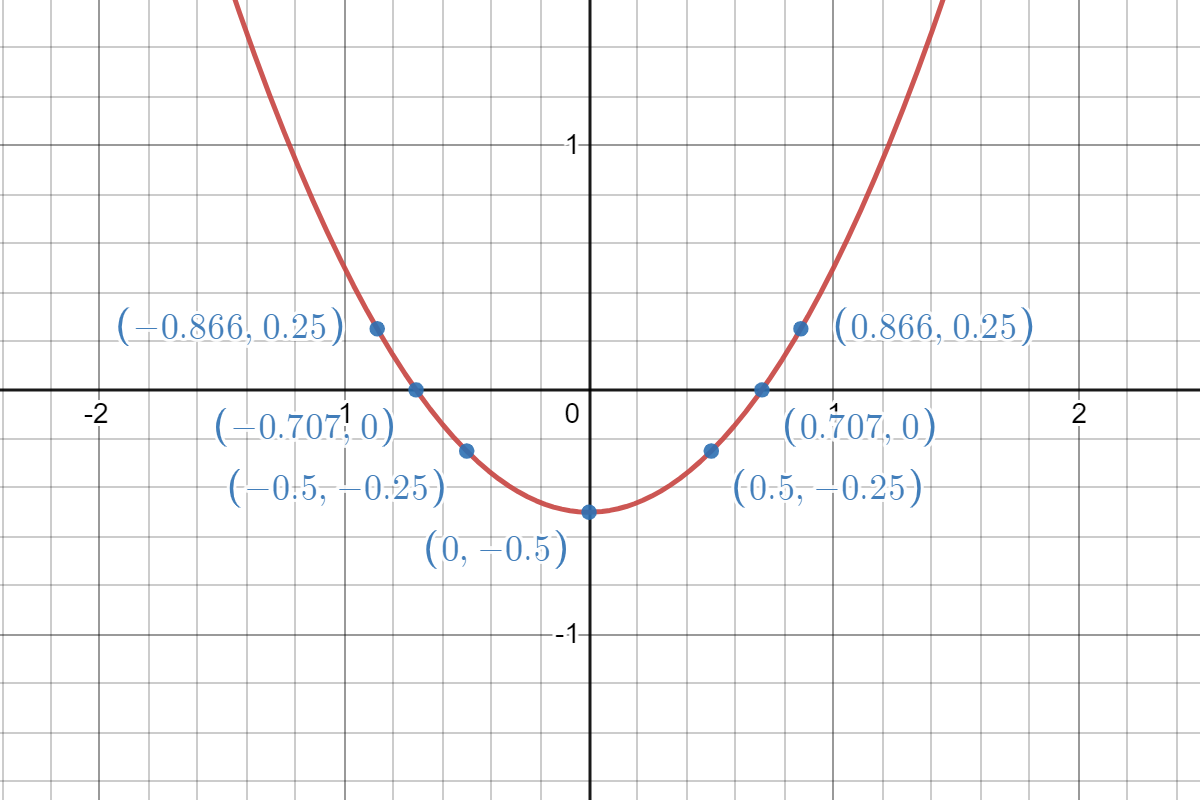}}}
    \caption{An illustration of swapping between the $x-$coordinate and $y-$coordinate of the data points of the left side of a hole, which is equivalent to reflection across the line $y=x$.}%
    \label{fig:reflection}
\end{figure}
Next, we find the minimum point $\big(\Tilde{y}_{p},p_{2k-1}(\Tilde{y}_{p})\big)$ of $p_{2k-1}(y)$ over the interval $[g(x_{n+k-1}),h(x_{n+k-1})]$. Finally, we define the approximation of the left PCT $\big(c,g(c)\big)$ to be $(p_x,p_y)\equiv \big(p_{2k-1}(\Tilde{y}_{p}),\Tilde{y}_{p}\big)$. For the right PCT we do a similar procedure, defining the approximation of the right PCT as $(q_x,q_y)$, using the maximum point of an analogue polynomial interpolating the reflected data near the right PCT.

\item \textbf{Approximating the lower and upper boundaries of the hole}:
    
In this step, we build a function that approximates the upper boundary function $h(x)$ (a similar procedure is applied for the lower boundary). The function $h(x)$, $x\in [c,d]$, is known to be $C^{2k}$ in $(c,d)$, with singularities at $c$ and $d$ of the form (\ref{sqrtexp1}) and (\ref{sqrtexp2}). The approximation procedure suggested here is based upon the observation (\ref{c2kclosedcd}), implying that $\hat h^{[2k]}$ can be efficiently approximated using spline interpolation over $[c,d]$.

In our problem we do not know the expansions $h_c^{[2k]}$ and $h_d^{[2k]}$, In particular, we do not know $c$ and $d$. Instead, 
We find two approximations $P\sim h_c^{[2k]}$ and
$Q\sim h_d^{[2k]}$. Following the singularity behavior in (\ref{sqrtexp1}) and (\ref{sqrtexp2}) we look for $P$ and $Q$ of the form

\begin{equation}\label{sqrtexp3}
P(x)=\sum_{j=0}^{r} p_j (x-p_x)^{j/2},
\end{equation}
\begin{equation}\label{sqrtexp4}
Q(x)=\sum_{j=0}^{r} q_j |x-q_x|^{j/2},
\end{equation}
 where $P$ interpolates the set of points 
\begin{equation}
\label{eq:first_data_set}
    \Big\{\big(p_{x},p_{y}\big)\Big\}\bigcup\Big\{\big(x_{j},h(x_{j})\big)\Big\}_{j=n}^{n+r-1},
\end{equation}
and $Q$ interpolates the set of points 
\begin{equation}
\label{eq:second_data_set}
    \Big\{\big(x_{j},h(x_{j})\big)\Big\}_{j=m-r+1}^{m}\bigcup\Big\{\big(q_{x},q_{y}\big)\Big\}.
\end{equation}

    Here $r$ is a free parameter to be determined according to the desired approximation order.
    
    Afterwards, we compute a "not-a-knot" cubic spline $S(x)$ interpolating the following set of data points
    \begin{equation}\label{Rdata}
        \quad\Big\{\big( p_{x},p_{y}-R(p_{x})\big)\Big\}
        \bigcup\Big\{\big(x_{i},h(x_{i})-R(x_{i})\big)\Big\}_{i=0}^{m}
        \bigcup\Big\{\big( q_{x},q_{y}-R(q_{x})\big)\Big\},
    \end{equation}
    where $R(x)=P(x)+Q(x)$. As explained above, the subtraction of $P$ and $Q$ intends to eliminate the singularity behavior of $h$ near the PCTs. 
    
    Finally, the upper boundary of the hole is approximated by the function
    \begin{equation}
        U(x)=S(x)+P(x)+Q(x).
   \end{equation}
   In a similar way we compute the approximation of the lower boundary of the hole.
    \end{enumerate}

    These approximations, together with the approximation of the upper and the lower boundaries of the graph of $F$, define the final approximation $\tilde F$ of the set-valued function $F$.

\subsection{Error Analysis}\label{EA}

The error analysis is composed of four steps:
\begin{itemize}
    \item Estimating the error in approximating the location of the PCT $(c,h(c))$.
    \item Bounding the error involved in the approximations $P\sim h_c^{[2k]}$ and $Q\sim h_d^{[2k]}$.
    \item Estimating the error in approximating the values $\{\hat h^{[2k]}(x_i)\}$ by $\{h(x_i)-R(x_i)\}$.
    \item Bounding the spline approximation error and combining all error estimates.
\end{itemize}

\subsubsection{ Error Analysis of the Approximation of the PCTs}\label{PCTappr}
\hfill

\medskip
In this section we find the approximation order in approximating the left PCT $\big(c,g(c)\big)$. A similar result applies for the approximation of the right PCT. 

The polynomial $p_{2k-1}$ defined in Step 3 of the algorithm is using the data at $x_i=a+i\Delta$, $n\le i \le n+k-1$. Since $(c,g(c))$ is the left PCT, it turns out that for a small enough $\Delta$, $g$ and $h$ are invertible over the interval $[c,x_{n+k-1}]$.

We define the following function
\begin{align*}
    \phi(y) = 
     \begin{cases}
       g^{-1}(y), &\quad y\in\big[g(x_{n+k-1}), g(c)\big)\\
       h^{-1}(y), &\quad y\in\big[h(c),h(x_{n+k-1})\big]\\
     \end{cases},
\end{align*}
recalling $g(c)=h(c)$. We observe that $\phi\in C^{2k}[g(x_{n+k-1}),h(x_{n+k-1})]$ since $\Gamma\in C^{2k}$. By definition, the polynomial $p_{2k-1}(y)$ interpolates $\phi(y)$ over the interval $\big[g(x_{n+k-1}),h(x_{n+k-1})\big]$. Since $h$ and $g$ has the local expansions (\ref{sqrtexp1}) and (\ref{sqrtexp2}), it follows that the maximal distance between the interpolation points for $p_{2k-1}(y)$ can be estimated as 
$\Delta_{y}=O\big(\sqrt{\Delta}\big)$ as $\Delta\to 0$.

 We note that by the definition of the inverse function, we have  $\phi(y_{p})=c$ where $y_p=h(c)=g(c)$. Thus, we denote the  actual left PCT by $\big(\phi(y_{p}),y_{p}\big)$. Recall that the approximated left PCT is $(p_x,p_y)\equiv \big(p_{2k-1}(\Tilde{y}_{p}),\Tilde{y}_{p}\big)$. In the following we estimate the Euclidean distance between the original and approximated PCT, 
\begin{equation}
    \label{eq:error_pct_approx_c}
    E_{k}=\sqrt{\big(\phi(y_{p})-p_{2k-1}(\Tilde{y}_{p})\big)^{2}+\big(y_{p}-\Tilde{y}_{p}\big)^{2}}.
\end{equation}
\begin{proposition}\label{Prop5}
Using the above algorithm for approximating the left PCT , $E=O\big(\Delta^{k-0.5}\big)$ as $\Delta\to 0$.
In particular, $|c-p_x|=O(\Delta^k)$
and $|h(c)-p_y|=O(\Delta^{k-1/2})$.
\end{proposition}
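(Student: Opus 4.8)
The plan is to transfer the problem, via the reflection across $y=x$, to the interpolation of the $C^{2k}$ function $\phi$ by $p_{2k-1}$, and then to exploit that $y_p$ is a \emph{quadratic} minimum of $\phi$. First I would record three structural facts. Since $\Gamma$ has non-zero curvature at the PCT, the inversion used in the local-series lemma gives $h^{-1}(y)\sim a_2 y^2+\cdots$ with $a_2>0$, and likewise for $g^{-1}$; hence $\phi$ attains an interior minimum at $y_p=h(c)=g(c)$ with $\phi(y_p)=c$, $\phi'(y_p)=0$ and $\mu:=\phi''(y_p)>0$. Because $g(x_{n+k-1})<y_p<h(x_{n+k-1})$, the point $y_p$ lies strictly inside the interpolation interval; and since $p_{2k-1}\to\phi$ uniformly there (shown below) while the interior value $\phi(y_p)=c$ is below the endpoint values $x_{n+k-1}=\phi(g(x_{n+k-1}))=\phi(h(x_{n+k-1}))$ by a gap of order $\Delta$, for small $\Delta$ the minimiser $\tilde y_p$ of $p_{2k-1}$ is interior, so $p_{2k-1}'(\tilde y_p)=0$.

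Next I would bound the interpolation error. The $2k$ nodes lie in an interval of length $\Delta_y=O(\sqrt\Delta)$ around $y_p$, so the classical remainder formula gives, for $y$ in that interval, $|\phi(y)-p_{2k-1}(y)|\le \frac{\|\phi^{(2k)}\|_\infty}{(2k)!}\prod_i|y-y_i|=O(\Delta_y^{2k})=O(\Delta^{k})$. To reach the sharper $y$-coordinate rate I would bound the \emph{derivative} error by rescaling: writing $y=y_p+\Delta_y t$, $\Phi(t)=\phi(y_p+\Delta_y t)$, $P(t)=p_{2k-1}(y_p+\Delta_y t)$, the rescaled nodes $t_i$ occupy a fixed $O(1)$ range, $\|\Phi^{(2k)}\|_\infty=O(\Delta_y^{2k})$, and the interpolation operator in the $t$-variable has $O(1)$ constants, so $\|\Phi'-P'\|_\infty=O(\Delta_y^{2k})$. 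Undoing the scaling ($\phi'=\Delta_y^{-1}\Phi'$) yields $\|\phi'-p_{2k-1}'\|_\infty=\Delta_y^{-1}O(\Delta_y^{2k})=O(\Delta_y^{2k-1})=O(\Delta^{k-1/2})$ on the node interval.

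With these two bounds the coordinate estimates decouple. For the $y$-coordinate, $p_{2k-1}'(\tilde y_p)=0$ and $\phi'(y_p)=0$, so by the mean value theorem $\phi'(\tilde y_p)=\phi''(\eta)(\tilde y_p-y_p)$ with $\phi''(\eta)\ge\mu/2>0$ for small $\Delta$, while $\phi'(\tilde y_p)=\phi'(\tilde y_p)-p_{2k-1}'(\tilde y_p)=O(\Delta^{k-1/2})$; hence $|h(c)-p_y|=|y_p-\tilde y_p|=O(\Delta^{k-1/2})$. For the $x$-coordinate, $p_x-c=\big(p_{2k-1}(\tilde y_p)-\phi(\tilde y_p)\big)+\big(\phi(\tilde y_p)-\phi(y_p)\big)$, where the first term is $O(\Delta^{k})$ by the interpolation bound and the second is $\tfrac12\phi''(\eta)(\tilde y_p-y_p)^2=O(\Delta^{2k-1})=O(\Delta^{k})$, so $|c-p_x|=O(\Delta^{k})$. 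Substituting both into $E_k$ in (\ref{eq:error_pct_approx_c}) gives $E_k=\sqrt{O(\Delta^{2k})+O(\Delta^{2k-1})}=O(\Delta^{k-1/2})$, as claimed.

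The step I expect to be delicate is the derivative bound $\|\phi'-p_{2k-1}'\|_\infty=O(\Delta^{k-1/2})$, because the nodes are far from equispaced: in the $y$-variable they cluster towards the tip with spacing $O(\sqrt\Delta)$, and in the worst case, when $c$ is very close to a sample point so that the two innermost nodes $g(x_n),h(x_n)$ both approach $y_p$, the rescaled configuration degenerates and the naive Lagrange constants blow up. The point to verify is that this does not spoil the estimate: writing the error in Newton / Hermite--Genocchi (divided-difference) form rather than Lagrange form, the relevant divided differences of the $C^{2k}$ function $\phi$ stay bounded by $\|\phi^{(2k)}\|_\infty/(2k)!$ even as nodes coalesce (the interpolation degenerating continuously to a confluent Hermite problem), so the constants remain controlled uniformly in the node placement. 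The remaining ingredients, namely the interior-minimiser claim and the quadratic nondegeneracy $\phi''(y_p)>0$ inherited from the non-zero curvature of $\Gamma$, are exactly what separate the two coordinates into the distinct $O(\Delta^{k})$ and $O(\Delta^{k-1/2})$ rates.
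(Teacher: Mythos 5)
Your proposal is correct and follows essentially the same route as the paper's proof: reflect across $y=x$, use $\phi\in C^{2k}$ with $\phi''>0$ at $y_p$ (from the non-zero curvature), bound $|\phi-p_{2k-1}|=O(\Delta_y^{2k})$ and $|\phi'-p_{2k-1}'|=O(\Delta_y^{2k-1})$ with $\Delta_y=O(\sqrt{\Delta})$, and then apply the mean value theorem at the two critical points to decouple the $O(\Delta^{k})$ rate in $x$ from the $O(\Delta^{k-1/2})$ rate in $y$. The only difference is that you make explicit two points the paper leaves implicit --- that $\tilde y_p$ is an \emph{interior} minimiser so that $p_{2k-1}'(\tilde y_p)=0$, and that the derivative-interpolation constant stays controlled under clustering or near-coalescence of the nodes $g(x_n),h(x_n)$ near the tip --- which is a welcome tightening rather than a different argument.
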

\begin{proof}
By the assumption on non-zero curvature of $\Gamma$ at the PCT, it follows that for a small enough $\Delta$ $\phi''(y)>C>0$ for $\forall y\in\big[g(x_{n+k-1}),h(x_{n+k-1})\big]$. By the mean value theorem
\begin{equation}
\label{al:error_estimate_phi_tag_c}
\frac{|\phi'(y_{p})-\phi'(\Tilde{y}_{p})|}{|y_{p}-\Tilde{y}_{p}|}=\frac{|0-\phi'(\Tilde{y}_{p})|}{|y_{p}-\Tilde{y}_{p}|}=|\phi''(y_{c})|>C>0,
\end{equation}
where $y_{c}$ is between $y_{p}$ and $\Tilde{y}_{p}$. Using the estimate for the error in approximating the derivative by polynomial interpolation, and recalling $p_{2k-1}'(\Tilde{y}_{p})=0$, we obtain
\begin{align*}
    \frac{|\phi'(\Tilde{y}_{p})|}{|y_{p}-\Tilde{y}_{p}|}=\frac{|p_{2k-1}'(\Tilde{y}_{p})+O(\Delta_{y}^{2k-1})|}{|y_{p}-\Tilde{y}_{p}|}=\frac{|0+O(\Delta_{y}^{2k-1})|}{|y_{p}-\Tilde{y}_{p}|}=\frac{O(\Delta_{y}^{2k-1})}{|y_{p}-\Tilde{y}_{p}|},
\end{align*}
and using (\ref{al:error_estimate_phi_tag_c}) we obtain
\begin{equation}
\label{eq:pct_x_error_c}
    |y_{p}-\Tilde{y}_{p}|=O(\Delta_{y}^{2k-1}).
\end{equation}
Moreover, by using the Mean Value Theorem, the interpolation error estimate, and $(\ref{eq:pct_x_error_c})$
\begin{equation}
\label{eq:pct_y_error_c1}
    |\phi(y_{p})-p_{2k-1}(\Tilde{y}_{p})|\leq |\phi(y_{p})-\phi(\Tilde{y}_{p})|+O(\Delta_{y}^{2k})= |\phi'(\xi)||y_{p}-\Tilde{y}_{p}|+O(\Delta_{y}^{2k}),
\end{equation}    
where $\xi$ is between $y_{p}$ and $\Tilde{y}_{p}$. Since $\phi'(y_p)=0$, $|\phi'(\xi)|\le C|y_{p}-\Tilde{y}_{p}|$. Hence,
\begin{equation}
\label{eq:pct_y_error_c2}
|\phi(y_{p})-p_{2k-1}(\Tilde{y}_{p})|\leq
 C|y_{p}-\Tilde{y}_{p}|^2+O(\Delta_{y}^{2k})=O(\Delta_{y}^{4k-2})+O(\Delta_{y}^{2k})=O(\Delta_{y}^{2k}).
\end{equation}

\iffalse
\begin{equation}
\label{eq:pct_y_error_c}
\begin{split}
    &|\phi(y_{p})-p_{2k-1}(\Tilde{y}_{p})|\leq |\phi(y_{p})-\phi(\Tilde{y}_{p})|+O(\Delta_{y}^{2k})\\&\leq |\phi'(y_{d})||y_{p}-\Tilde{y}_{p}|+O(\Delta_{y}^{2k}) \leq \mathcal{M}|y_{p}-\Tilde{y}_{p}|+O(\Delta_{y}^{2k})\\&=O(\Delta_{y}^{2k-1})+O(\Delta_{y}^{2k})=O(\Delta_{y}^{2k-1}).
\end{split}
\end{equation}

Here $y_{d}$ is between $y_{p}$ and $\Tilde{y}_{p}$ and $\mathcal{M}$ is the maximum of $\phi'$ in the interval between $y_{p}$ and $\Tilde{y}_{p}$.
\fi

Recalling $\phi(y_{p})=c$ and $y_p=h(c)=g(c)$, and the notation for the approximated PCT, $(p_x,p_y)=(p_{2k-1}(\tilde y_p),\tilde y_p)$, it follows from $(\ref{eq:pct_x_error_c})$ and $(\ref{eq:pct_y_error_c2})$ that $|c-p_x|=O(\Delta^k)$
and $|h(c)-p_y|=O(\Delta^{k-1/2})$.
Combining the error estimates we conclude that
\begin{equation}
\label{eq:error_estimate_pct_c}
    E_{k}=O(\Delta_{y}^{2k-1})=O(\Delta^{k-\frac{1}{2}}).
\end{equation}
\end{proof}

\subsubsection{The error in approximating the local expansion near the PCT}
\hfill

\medskip
To analyze the approximations $P\sim h_c^{[2k]}$ and $Q\sim h_d^{[2k]}$ we employ the following interpolation lemma:

\begin{lemma}\label{lemmaT}
Let $T(s)$ be a local power series of $f\in C^{r+1}[a,b]$ at $0\in (a,b)$,
$$T(s)=\sum_{j=0}^ra_js^j,$$
and let $p_r(s)=\sum_{j=0}^rb_js^j$ interpolate the data
$\{(s_i,f(s_i))\}_{i=0}^r$, where $s_i\in [0,r\delta]$ and $\delta$ is the maximal distance between interpolation points. 
Then, for $0\le j\le r$,
\begin{equation}\label{InterpolationLemma}
|b_j-a_j|=O(\delta^{r+1-j}), \ \ {\text as}\ \delta\to 0.
\end{equation}
\end{lemma}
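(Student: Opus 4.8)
The plan is to analyze the difference polynomial $q := p_r - T$, which has degree at most $r$, and to recover bounds on its coefficients $c_j := b_j - a_j$ from the values $q$ takes at the interpolation nodes, after rescaling those nodes by $\delta$.

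\emph{Step 1: the nodal values of $q$.} Since $T$ is a local power series of the $C^{r+1}$ function $f$ at $0$, uniqueness of the integer-power asymptotic expansion forces $T$ to be the Taylor polynomial, $a_j = f^{(j)}(0)/j!$, so the Lagrange form of Taylor's remainder gives $f(s)-T(s) = \frac{f^{(r+1)}(\eta(s))}{(r+1)!}\,s^{r+1}$. Using $p_r(s_i)=f(s_i)$ and $|s_i|\le r\delta$, together with the boundedness of $f^{(r+1)}$ on $[a,b]$, I obtain
$$v_i := q(s_i) = f(s_i)-T(s_i) = O(\delta^{r+1}), \qquad i=0,\dots,r.$$

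\emph{Step 2: rescaling and the Vandermonde system.} Writing $q(s)=\sum_{j=0}^r c_j s^j$, the relations $q(s_i)=v_i$ form the Vandermonde system $\sum_j c_j s_i^j = v_i$. I would substitute $s_i=\delta\sigma_i$ with $\sigma_i = s_i/\delta \in [0,r]$ and set $d_j := c_j\delta^j$, so that the system becomes $\sum_j d_j \sigma_i^j = v_i$, whose matrix $V=(\sigma_i^j)_{0\le i,j\le r}$ is the Vandermonde matrix of the rescaled nodes. The crucial point is that the $\sigma_i$ lie in the fixed interval $[0,r]$ and stay well separated as $\delta\to 0$, so that $\det V=\prod_{i<k}(\sigma_k-\sigma_i)$ is bounded away from $0$ while the entries are bounded, whence $\norm{V^{-1}}$ is bounded uniformly in $\delta$. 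Then $d_j = O(\max_i|v_i|) = O(\delta^{r+1})$, and unscaling yields $c_j = \delta^{-j} d_j = O(\delta^{r+1-j})$, i.e. $|b_j-a_j| = O(\delta^{r+1-j})$, as claimed.

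\emph{Main obstacle and a consistency check.} The only genuine difficulty is the uniform-in-$\delta$ control of $\norm{V^{-1}}$: before rescaling, the Vandermonde matrix is singular in the limit $\delta\to 0$, and it is precisely this degeneracy that produces the loss of one power of $\delta$ per coefficient index $j$, matching the target decay $\delta^{r+1-j}$. The rescaling isolates the single structural hypothesis actually needed, namely that the nodes, measured in units of $\delta$, do not coalesce. This holds in our applications, where the relevant nodes behave like $\sigma_i\sim\sqrt{i}$ (they arise from interpolation in the variable $\sqrt{x-p_x}$ at the essentially equispaced abscissae of Step 3 of the algorithm), and whose pairwise gaps are bounded below by a positive constant depending only on $r$. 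As a check at top order, the leading coefficient is the divided difference $b_r = f[s_0,\dots,s_r] = f^{(r)}(\zeta)/r!$ for some $\zeta\in[0,r\delta]$, whence $|b_r-a_r| = \frac{1}{r!}\,|f^{(r)}(\zeta)-f^{(r)}(0)| = O(\delta)$, in agreement with $\delta^{r+1-j}$ at $j=r$.
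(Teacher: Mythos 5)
Your reduction to the difference polynomial $q=p_r-T$ and the estimate $q(s_i)=f(s_i)-T(s_i)=O(\delta^{r+1})$ are correct, but the Vandermonde step has a genuine gap. The uniform bound on $\norm{V^{-1}}$ requires that the rescaled nodes $\sigma_i=s_i/\delta$ remain separated by a constant independent of $\delta$; this is neither a hypothesis of the lemma (which controls only the \emph{maximal} gap and the containment $s_i\in[0,r\delta]$) nor guaranteed where the lemma is applied. In Step 4 of the algorithm the nodes are $s_0=0$ and $s_{i}=(x_{n+i-1}-c)^{1/2}$ with $\delta\sim\sqrt{\Delta}$, so $\sigma_1-\sigma_0\sim\sqrt{(x_n-c)/\Delta}$, which is arbitrarily small when the first sample point $x_n$ falls close to $c$. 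Moreover the loss is intrinsic to your method, not a technicality: from the nodal values $q(s_i)=O(\delta^{r+1})$ \emph{alone} the coefficient bounds are false for nearly coalescent nodes (for $r=2$ and nodes $0,\epsilon,\delta$ with $\epsilon\ll\delta$ one only gets $|c_2|=|q[0,\epsilon,\delta]|=O(\delta^{2}/\epsilon)$, which can far exceed the claimed $O(\delta)$). To recover the stated rate one must use that $p_r$ interpolates $f$ itself and that $T$ is the Taylor polynomial of $f$, i.e.\ derivative information, not just values.

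That is exactly what the paper's (one-line) argument does, and it is the safer route: since the local power series of a $C^{r+1}$ function in integer powers is unique, $a_j=f^{(j)}(0)/j!$, while $b_j=p_r^{(j)}(0)/j!$. By repeated application of Rolle's theorem, $(f-p_r)^{(j)}$ vanishes at $r+1-j$ points of $[0,r\delta]$, which yields the standard derivative-error bound $|f^{(j)}(0)-p_r^{(j)}(0)|\le \frac{1}{(r+1-j)!}\,\norm{f^{(r+1)}}_{\infty}\,(r\delta)^{r+1-j}$ (cf.\ the Cauchy--Howell estimates cited in the bibliography). This uses only that all nodes and the evaluation point $0$ lie in an interval of length $O(\delta)$, with no separation assumption, and it subsumes your $j=r$ divided-difference consistency check. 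I recommend replacing the Vandermonde argument by this one, or else adding an explicit quasi-uniformity hypothesis to the lemma --- which would then have to be verified (and in general fails) for the $\sqrt{x-c}$-transformed nodes of the application.
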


The result (\ref{InterpolationLemma}) follows using standard estimates for the approximation of a function and its derivatives by polynomial interpolation, applied at $s=0$.

Let $\tilde P(x)=\sum_{j=0}^{r} \tilde p_j (x-c)^{j/2}$ interpolate the function $h$
at the data set $\{c,\{x_{j}\}_{j=n}^{n+r-1}\}.$
We recall that $h(x)$ has a local series expansion of the form
$$h_c^{[2k]}(x)=\sum_{j=0}^{2k} c_j (x-c)^{j/2}.$$
Setting $s=(x-c)^{1/2}$, the problem is transformed into approximating $h(s^2+c)\sim \sum_{j=0}^{2k} c_j s^{j}$ by polynomial interpolation at the points
$s_0=0$ and $\{s_{j-n+1}=(x_{j}-c)^{1/2}\}_{j=n}^{n+r-1}.$ Noting that the maximal distance between the interpolation points $\{s_j\}_{j=0}^r$ is $O(\Delta^{1/2})$, and using the above lemma, we obtain for $0\le j\le r$,
$$|\tilde p_j-c_j|=O(\Delta^{\frac{r+1-j}{2}}), \ \ {\text as}\ \Delta\to 0.$$
Here it also follows that $\tilde p_0=c_0=h(c)$.

Since the location of the PCT is unknown, we cannot compute $\tilde P$. Instead, we compute $P(x)=\sum_{j=0}^{r} p_j (x-p_x)^{j/2}$ by interpolating the data
\begin{equation}
    \Big\{\big(p_{x},p_{y}\big)\Big\}\bigcup\Big\{\big(x_{j},h(x_{j})\big)\Big\}_{j=n}^{n+r-1},
\end{equation}
and we need to estimate the errors $|p_j-c_j|$.

By Proposition \ref{Prop5} we have
$|c-p_x|=O(\Delta^{k})$ and $|h(c)-p_y|=O(\Delta^{k-1/2})$.

Let us compare the interpolation problems for $\tilde P$ and for $P$, using the Lagrange interpolation formula. We observe that in the denominators of the Lagrange polynomials for $P$ there is an $O(\Delta^{k-3/2})$ perturbation relative to those for $\tilde P$. It follows that $$p_0=p_y=c_0+O(\Delta^{k-\frac{1}{2})},$$
and for $1\le j\le r$
\begin{equation}\label{pjmcj}
|p_j-c_j|=O(\Delta^{\frac{r+1-j}{2}})+O(\Delta^{k-\frac{3}{2}}).
\end{equation}
Similar estimates hold for the approximation $Q(x)$.

Comparing $P$ and $h_c^{[2k]}$, the other source of discrepancy is due to different expansion points, $p_x\ne c$. The major influence comes from the leading singular terms in the power series expansions. Using the estimate $|c-p_x|=O(\Delta^k)$ as $k\to 0$, it follows that 
\begin{equation}\label{cmxp}
||x-c|^\frac{1}{2}-|x-p_x|^\frac{1}{2}|\le C|c-x_p|^\frac{1}{2}=O(\Delta^\frac{k}{2}),\ \ {\text as}\ \Delta\to 0.
\end{equation}
The contribution of other terms in the expansion is of higher order in $\Delta$. 

\subsubsection{The error in the spline approximation to $\hat h^{[2k]}$.}
\hfill

\medskip
For $k\ge s$, $s$ even, by Corollary \ref{Coro51}, $\hat h^{[2k]}\in C^s[c,d]$. 
Using a $s$ order spline interpolant, $\hat S$, to approximate $\hat h^{[2k]}$ yields a uniform approximation error
$$
|\hat h^{[2k]}(x)-\hat S(x)|\le C\Delta^s,\ \ \forall x\in [c,d].
$$
However, since $\hat h^{[2k]}$ is unavailable, we apply the spline interpolation to the data (\ref{Rdata}), which is an approximation of exact data of $\hat h^{[2k]}$.

Using the estimates (\ref{pjmcj}), (\ref{cmxp}), we have that
\begin{equation}\label{h2kmR}
|h_c^{[2k]}(x)+h_d^{[2k]}(x)-R(x)|=O(\Delta^{\frac{r}{2}+\frac{1}{2}})+O(\Delta^{k-\frac{3}{2}})+O(\Delta^{\frac{k}{2}}),\ \ x\in[\max\{c,p_x\},c+K\Delta],
\end{equation}
where $K$ is independent of $\Delta$. Using $r=4$ and $k=5$ gives
\begin{equation}\label{h2kmR2}
|h_c^{[10]}(x)+h_d^{[10]}(x)-R(x)|=O(\Delta^{\frac{5}{2}}),\ \ x\in[\max\{c,p_x\},c+K\Delta].
\end{equation}
In the following we assume $k\ge 3$ which implies that the term $O(\Delta^{k-\frac{3}{2}})$ is redundant. A similar estimate holds near the right PCT.

Approximating the PCT location using $k$ sample points, approximating the local series expansions $P$ and $Q$ using the approximated PCT and $r$ samples, approximating the resulting regularized data using a not-a-knot cubic spline interpolation $S$ on $[p_x,q_x]$, we define the final approximation to $h$ as 
\begin{equation}\label{tildeh}
\tilde h(x)=S(x)+P(x)+Q(x),\ \ x\in [p_x,q_x].
\end{equation}

We notice that $h$ is defined on the interval $[c,d]$, while $\tilde h$ is defined on $[p_x,q_x]$. In order to compare between $h$ and $\tilde h$ we extend each of them to a larger interval $[c_e,d_e]$, $c_e=p_x-\epsilon$, $d_e=q_x+\epsilon$, as follows:

\begin{equation}\label{he}
    h_e(x) = 
     \begin{cases}
       h(c) &\quad x\in [c_e,c)\\
       h(x) &\quad x\in [c,d]\\ 
       h(d) &\quad x\in (d,d_e]\\
     \end{cases},
\end{equation}

\begin{equation}\label{tildehe}
    \tilde h_e(x) = 
     \begin{cases}
       \tilde h(p_x) &\quad x\in [c_e,p_x)\\
       \tilde h(x) &\quad x\in [p_x,q_x]\\
       \tilde h(q_x) &\quad x\in (q_x,d_e]\\
     \end{cases}.
\end{equation}

Using the estimates in Proposition \ref{Prop5}, $|c-p_x|$ and $|d-q_x|$ are both bounded by $C\Delta^k$. Hence, choosing $\epsilon\ge C\Delta^k$ guarantee that $[c_e,d_e]$ contains both intervals $[c,d]$ and $[p_x,q_x]$.

All the above estimates lead to the following approximation theorem:

\begin{proposition}\label{Prop6}
Using spline interpolation of order $s$ for $S$, and assuming $k\ge 3$,
\begin{equation}\label{tildehapp}
\|h_e-\tilde h_e\|_{[a,b],\infty}=O(\Delta^{\frac{r}{2}+\frac{1}{2}})+O(\Delta^{s})+O(\Delta^{\frac{k}{2}}), {\ \ \text as}\ \ \Delta\to 0.
\end{equation}
\end{proposition}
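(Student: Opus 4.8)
The plan is to split the error into a ``singular part'' coming from the local expansions $P,Q$ and a ``smooth part'' handled by the spline $S$, and to treat the common interval $[\max\{c,p_x\},\min\{d,q_x\}]$ separately from the extension and mismatch intervals near the PCTs. On the common interval one writes, using Corollary \ref{Coro51} together with $\tilde h=S+P+Q=S+R$ from (\ref{tildeh}),
\begin{equation*}
h-\tilde h=\big(\hat h^{[2k]}-S\big)+\big(h_c^{[2k]}+h_d^{[2k]}-R\big),
\end{equation*}
since $h=\hat h^{[2k]}+h_c^{[2k]}+h_d^{[2k]}$. The second bracket is exactly the quantity estimated in (\ref{h2kmR}); after promoting that near-PCT bound to a uniform one (away from the PCT the difference $||x-c|^{1/2}-|x-p_x|^{1/2}|$ from (\ref{cmxp}) only decreases, so the worst case is the one already recorded) and invoking $k\ge3$ to discard the redundant $O(\Delta^{k-3/2})$ term, this contributes $O(\Delta^{r/2+1/2})+O(\Delta^{k/2})$.

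For the first bracket I would exploit the stability of the interpolating spline. The spline $S$ in (\ref{Rdata}) interpolates the data $h(x_i)-R(x_i)=\hat h^{[2k]}(x_i)+\eta_i$, where $\eta_i=h_c^{[2k]}(x_i)+h_d^{[2k]}(x_i)-R(x_i)$ obeys $|\eta_i|=O(\Delta^{r/2+1/2})+O(\Delta^{k/2})$ by the same estimate (the endpoint data at $p_x,q_x$ being likewise $O(\Delta^{k/2})$-perturbations). Writing $S=S_0+S_\eta$, with $S_0$ interpolating the exact values $\hat h^{[2k]}(x_i)$ and $S_\eta$ interpolating the $\eta_i$, the clean part satisfies $\|\hat h^{[2k]}-S_0\|_\infty=O(\Delta^s)$ by the order-$s$ spline estimate (the cubic case being (\ref{notaknot})), which is admissible because $\hat h^{[2k]}\in C^k[c,d]\subseteq C^s[c,d]$ once $k\ge s$. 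The perturbation part is controlled by the uniform boundedness of the spline interpolation operator, giving $\|S_\eta\|_\infty\le C\max_i|\eta_i|=O(\Delta^{r/2+1/2})+O(\Delta^{k/2})$. Summing the three contributions yields the claimed bound on the common interval.

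It remains to cover the extension and mismatch intervals, where $h_e$ and $\tilde h_e$ are produced by the different rules (\ref{he}) and (\ref{tildehe}). On the outer regions both functions are constant and the error reduces to $|h(c)-p_y|=O(\Delta^{k-1/2})$ (Proposition \ref{Prop5}), which is dominated by $O(\Delta^{k/2})$. On the mismatch interval between $c$ and $p_x$, whose length is $|c-p_x|=O(\Delta^k)$, one of the two functions is constant and the other is the genuine (approximated or exact) boundary; I would bound the error by the triangle inequality, combining $|h(c)-p_y|=O(\Delta^{k-1/2})$ with the square-root Hölder modulus of $h$ and of $\tilde h$ at the PCT, so that both $|h(x)-h(c)|$ and $|\tilde h(x)-p_y|$ are $O(\Delta^{k/2})$ for $x$ within $O(\Delta^k)$ of the PCT. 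The right PCT, handled by $Q$ and $(q_x,q_y)$, is identical.

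The hard part is the spline-stability step: one must know that the not-a-knot (order-$s$) spline interpolation operator has a uniformly bounded sup-norm, so that the data perturbation $\eta_i$ propagates linearly rather than being amplified as $\Delta\to0$. A secondary delicate point is verifying that the localized estimate (\ref{h2kmR}), established only in an $O(\Delta)$ neighbourhood of a PCT, is in fact the global maximum of $|h_c^{[2k]}+h_d^{[2k]}-R|$ on $[c,d]$, which is what legitimizes using the single bound $O(\Delta^{r/2+1/2})+O(\Delta^{k/2})$ throughout.
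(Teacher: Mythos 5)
Your overall strategy matches the paper's: the paper gives no formal proof of Proposition~\ref{Prop6}, merely asserting that the estimates of Sections 4.3.1--4.3.3 ``lead to'' it, and your decomposition $h-\tilde h=(\hat h^{[2k]}-S)+(h_c^{[2k]}+h_d^{[2k]}-R)$, together with the splitting $S=S_0+S_\eta$, the appeal to the boundedness of the spline interpolation operator, and the explicit treatment of the extension and mismatch intervals near the PCTs, is a sensible and more detailed reconstruction of that chain of estimates than the paper itself provides.

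The genuine gap is the ``promotion'' of (\ref{h2kmR}) to a uniform bound on $[c,d]$. Your justification covers only the expansion-point-shift term (\ref{cmxp}), which indeed does not grow away from $c$. But $h_c^{[2k]}+h_d^{[2k]}-R$ contains two other $x$-dependent contributions that do grow: (i) the coefficient errors, which by (\ref{pjmcj}) contribute $|p_j-c_j|\,|x-p_x|^{j/2}=O(\Delta^{(r+1-j)/2})\,|x-p_x|^{j/2}$, and for $j=r$ with $|x-p_x|=O(1)$ this is only $O(\Delta^{1/2})$, not $O(\Delta^{r/2+1/2})$; and (ii) the truncation tail $\sum_{j=r+1}^{2k}c_j(x-c)^{j/2}$ of $h_c^{[2k]}$ in (\ref{sqrtexp1}), which is not approximated by $P$ at all and is $O(1)$ at distance $O(1)$ from $c$. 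Hence the second bracket is not uniformly small on $[c,d]$, and the two-bracket bound fails in the bulk of the interval. The repair is to use the cancellation argument only in the $O(\Delta)$ neighbourhoods of $c$ and $d$, where (\ref{h2kmR}) is actually proved and where your mismatch-interval analysis applies, and elsewhere to fold the tail and coefficient-error terms into the function the spline is asked to approximate rather than into a data/pointwise perturbation: the odd-$j$ tail terms $(x-c)^{j/2}$ with $j\ge r+1$ have global smoothness $C^{\lfloor (r+1)/2\rfloor}$ and are smooth away from $c$, so the spline reproduces them to $O(\Delta^{(r+1)/2})=O(\Delta^{r/2+1/2})$ --- which is in fact where that term in (\ref{tildehapp}) originates. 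As written, your argument establishes the claimed bound only near the PCTs and on the extension intervals, not on the interior of $[c,d]$.
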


A similar construction, with a similar approximation estimate hold for the approximation $\tilde g_e$ to $g_e$.
\iffalse
Applying the approximation procedure for each of the holes' SVFs $\{F_i\}_{i=1}^M$, we obtain the corresponding approximations $\{\tilde F_i\}_{i=1}^M$. We also define the approximation $\tilde F_0$ to $F_0$ using $s$ order spline interpolation to the lower and the upper boundary functions $\ell$ and $u$.
\fi

\subsubsection{The case of $M$ holes}\label{Mholes5}
\hfill

\medskip
Let $F$ be an SVF such that $Graph(F)$ has separable $M$ holes $\{H_i\}_{i=1}^M$ (i.e. the closures of the holes are disjoint). The hole $H_i$ is defined on an interval denoted by $[c_i,d_i]\subset(a,b)$, and we assume that it is simple, namely, it is defined as the interior of a closed boundary curve $\Gamma_i$, such that every vertical cross-section at $x\in(c_i,d_i)$ cuts the curve at two points. We further assume that the curves $\{\Gamma_i\}$ do not intersect each other, and do not intersect the upper and the lower boundaries of $Graph(F)$. We further assume that each $\Gamma_i$ has non-zero curvature at both PCTs of $H_i$. Let the upper and lower boundaries of $H_i$ be defined by the functions $h_i$ and $g_i$ respectively. We also recall the functions $u$ and $\ell$ defining the upper and lower boundaries of $Graph(F)$.

The SVF approximation $\tilde F(x)$ is defined as follows: For each hole $H_i$ we apply the approximation algorithm described in Section \ref{Algorithm5} for the case of one hole. The outcome includes approximations $\tilde h_i\sim h_i$, $\tilde g_i\sim g_i$ on an interval $[\tilde c_i,\tilde d_i]\equiv [p_{x,i},q_{x,i}]$ approximating the interval $[c_i,d_i]$. 

We continue the analysis as in Section \ref{Mholes}, using the same idea of extended functions on extended intervals, and using the same definitions therein.

\begin{theorem}\label{Theorem5}
Let $F$ be an SVF such that $Graph(F)$ has separable $M$ holes with $C^{2k}$ boundary curves with non-zero curvatures at the PCTs. Defining the approximation by (\ref{tildeFatxe4}), then, for a small enough $\Delta$,
\begin{equation}
d_H(F(x),\tilde F(x))\le C_1\Delta^{\frac{r}{2}+\frac{1}{2}}+C_2\Delta^{s}+C_3\Delta^{\frac{k}{2}}.
\end{equation}
\end{theorem}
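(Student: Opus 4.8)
The plan is to reduce the set-valued estimate to the scalar boundary estimates already in hand, exactly as in the proof of Theorem~\ref{TheoremC4}, but feeding in the weaker per-boundary rate of Proposition~\ref{Prop6} in place of the $O(\Delta^4)$ rate used there. First I would record that, by the separability of the holes and the containment $[c_{e,i},d_{e,i}]\supset[c_i,d_i]\cup[\tilde c_i,\tilde d_i]$ established in Section~\ref{Mholes} for small enough $\Delta$, the index set $I(x)$ and the ordering producing $J(x)$ in (\ref{Fatxe4}) coincide with those producing $\tilde F$ in (\ref{tildeFatxe4}). Consequently $F(x)$ and $\tilde F(x)$ are written as unions of the same number $J(x)+1$ of intervals, and the $j$-th interval of $F(x)$ is matched to the $j$-th interval of $\tilde F(x)$; the endpoints of the matched pair are values of the boundary functions $\ell,u,\{g_{e,i}\},\{h_{e,i}\}$ against their approximants $\tilde\ell,\tilde u,\{\tilde g_{e,i}\},\{\tilde h_{e,i}\}$.

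Next I would bound the deviation of each pair of corresponding endpoints. For the outer boundaries, the not-a-knot cubic spline estimate (\ref{notaknot}) gives $\|u-\tilde u\|_\infty,\|\ell-\tilde\ell\|_\infty=O(\Delta^{s})$, which is dominated by the right-hand side of (\ref{tildehapp}). For each hole boundary, Proposition~\ref{Prop6} and its analogue for $g$ give
\[
\|h_{e,i}-\tilde h_{e,i}\|_\infty,\ \|g_{e,i}-\tilde g_{e,i}\|_\infty=O(\Delta^{\frac{r}{2}+\frac{1}{2}})+O(\Delta^{s})+O(\Delta^{\frac{k}{2}}).
\]
Since the Hausdorff distance between two real intervals equals the maximum of the distances between their respective endpoints (and this characterization persists in the degenerate case of a single point, which occurs on $[c_{e,i},c_i)$ and $[\tilde c_i,p_{x,i})$ where the extended functions are constant), each matched pair of intervals in (\ref{Fatxe4}) and (\ref{tildeFatxe4}) is at Hausdorff distance $O(\Delta^{\frac{r}{2}+\frac{1}{2}})+O(\Delta^{s})+O(\Delta^{\frac{k}{2}})$.

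Finally I would assemble the bound over the union. Applying Lemma~\ref{Lemma:sets} inductively to the $J(x)+1$ matched intervals shows that $d_H\big(F(x),\tilde F(x)\big)$ is bounded by the maximum over $j$ of the per-interval Hausdorff distances, which is again $O(\Delta^{\frac{r}{2}+\frac{1}{2}})+O(\Delta^{s})+O(\Delta^{\frac{k}{2}})$, uniformly in $x$. Absorbing the implied constants into $C_1,C_2,C_3$ yields the claim.

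I expect the only real subtlety to be the bookkeeping near the PCTs: there $F(x)$ and $\tilde F(x)$ genuinely differ in topology on the short interval between $c$ and $p_x$ (of length $O(\Delta^{k})$ by Proposition~\ref{Prop5}), so one must verify that a degenerate interval in one union is matched to a genuinely small interval in the other, contributing error of the stated order rather than an uncontrolled jump. The extended-function formulation on the common intervals $[c_{e,i},d_{e,i}]$ is precisely what regularizes this, and the endpoint-distance characterization of $d_H$ for intervals then disposes of the degenerate case automatically, so no separate case analysis as in Proposition~\ref{prop:error_estimate_b} is required.
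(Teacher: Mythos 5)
Your proposal is correct and follows essentially the same route as the paper's proof: match each interval in (\ref{Fatxe4}) with its counterpart in (\ref{tildeFatxe4}), bound the endpoint deviations via (\ref{tildehapp}) (Proposition~\ref{Prop6}) and the spline estimate for $\ell,u$, and conclude with Lemma~\ref{Lemma:sets}. The paper states this in two sentences; your version merely supplies the bookkeeping (coincidence of $J(x)$, the degenerate-interval case near the PCTs) that the paper leaves implicit.
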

\begin{proof}
Each interval in (\ref{Fatxe4}) has a corresponding interval in (\ref{tildeFatxe4}), and by (\ref{tildehapp}) it is clear that the Hausdorff distance between corresponding intervals is of order
$$O(\Delta^{\frac{r}{2}+\frac{1}{2}})+O(\Delta^{s})+O(\Delta^{\frac{k}{2}}), {\ \ \text as}\ \ \Delta\to 0.$$
The proof is completed using the result in Lemma \ref{Lemma:sets}.
\end{proof}

\subsection{Numerical Results}
We demonstrate the process of approximating the left PCT as well as showing the decay rate of the interpolation error on a set-valued function with an elliptic hole, displayed in figure \ref{fig:example_C1}, which is explicitly given by,

\[ F(x)=\begin{cases} 
      \big[-\frac{3}{2},\frac{3}{2}\big],\qquad &x\in\big[-1,1\big]/\big[-\frac{1}{2},\frac{1}{2}\big],\\
       \Big[-\frac{3}{2}, -\sqrt{1-4x^2}\Big]\bigcup\Big[\sqrt{1-4x^2}, \frac{3}{2}\Big],\qquad &x\in\big[-\frac{1}{2}   ,\frac{1}{2}\big].
   \end{cases}
\]

\begin{figure}[!ht]
  \centering
  \subfloat[][The Set-Valued Function $F$]{\includegraphics[width=.4\textwidth]{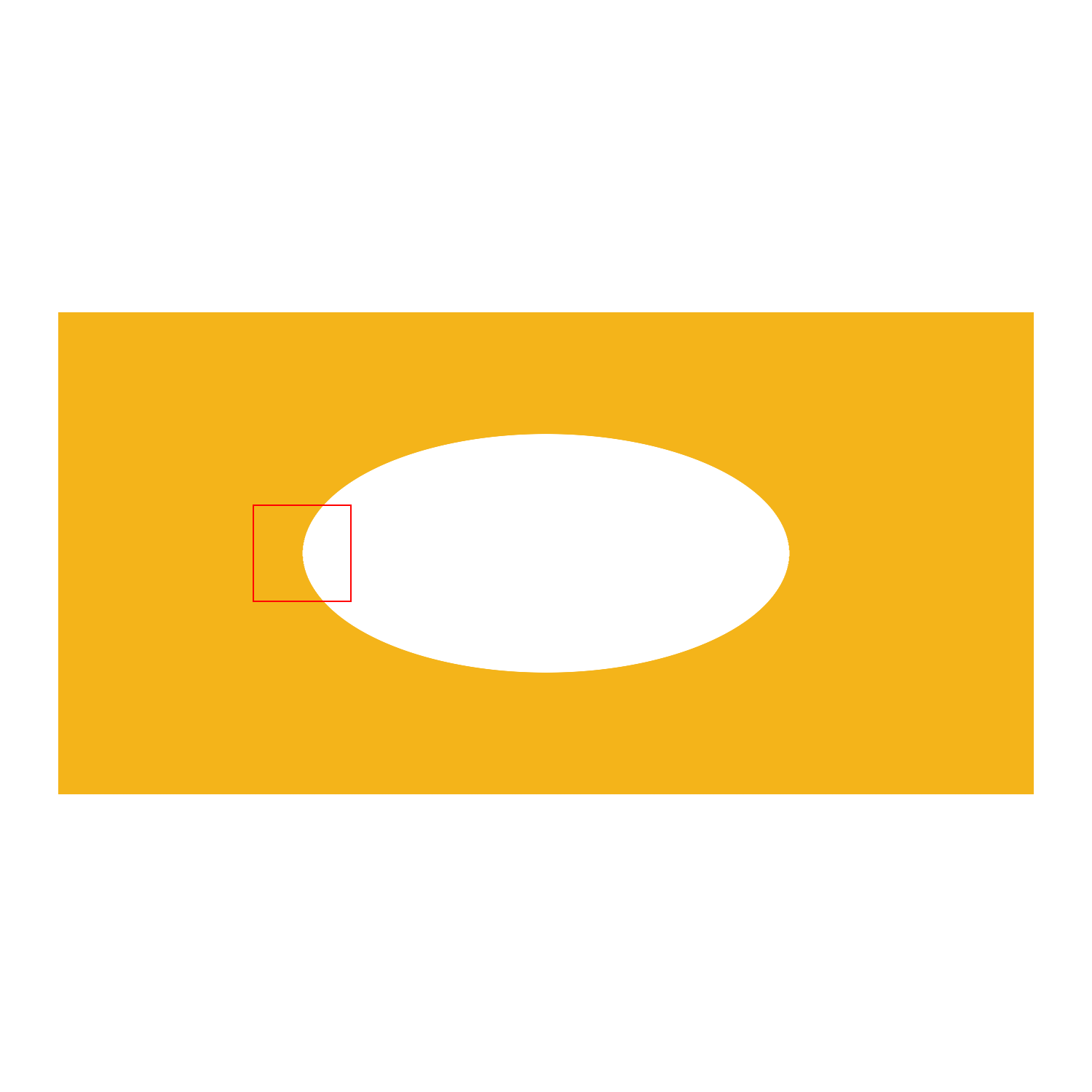}}\quad
  \subfloat[][Approximation with 30 samples]{\includegraphics[width=.4\textwidth]{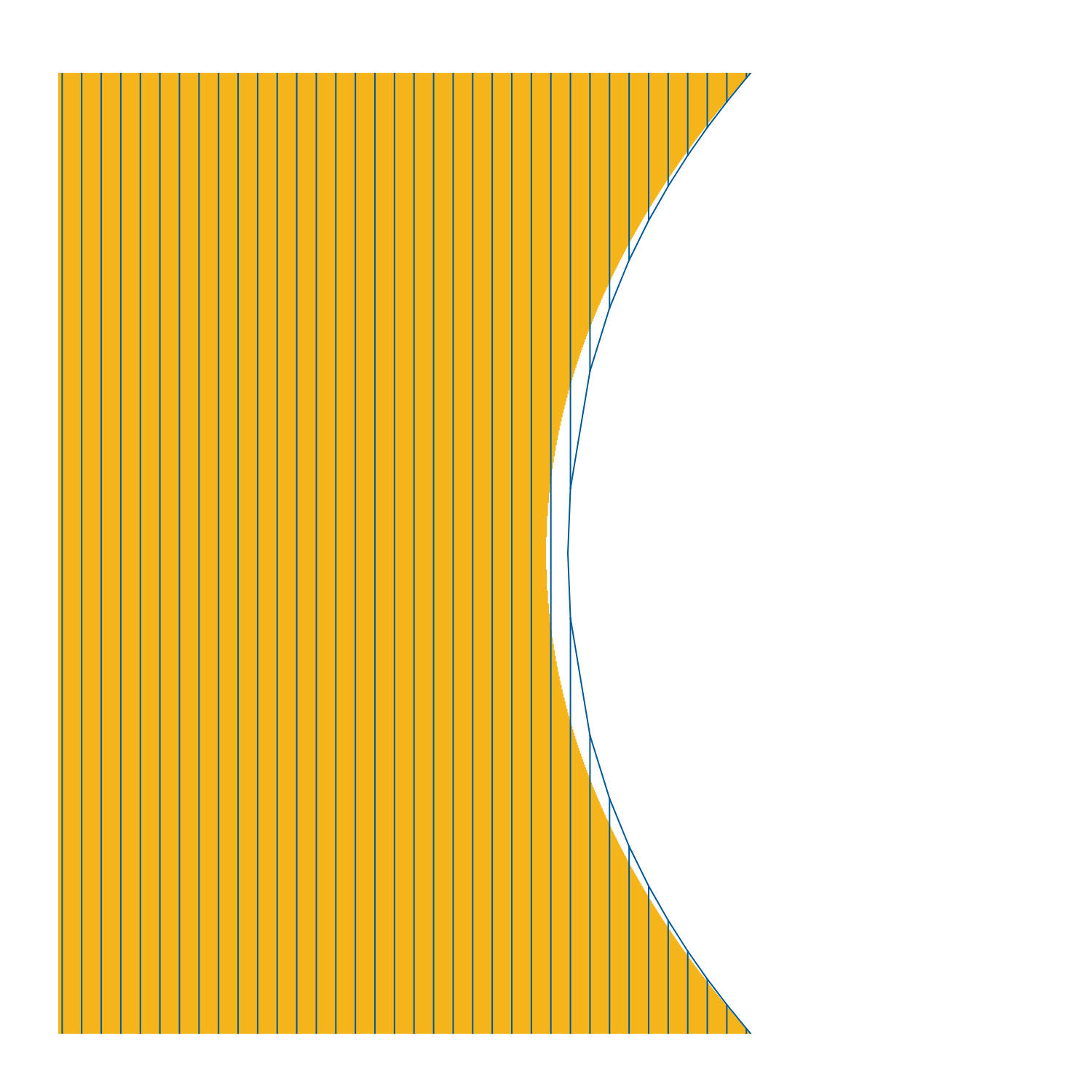}}\\
  \subfloat[][Approximation with 35 samples]{\includegraphics[width=.4\textwidth]{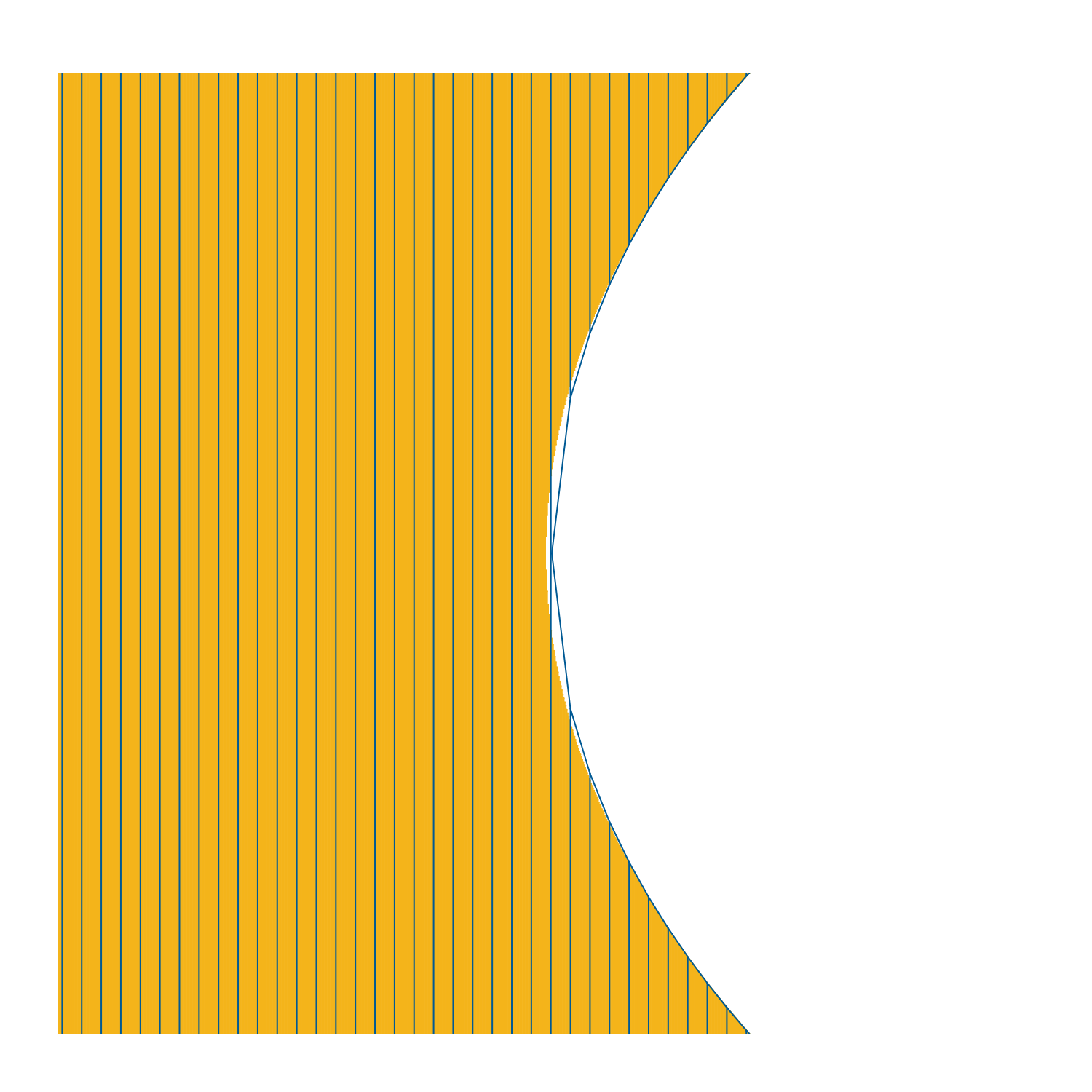}}\quad
  \subfloat[][Approximation with 40 samples]{\includegraphics[width=.4\textwidth]{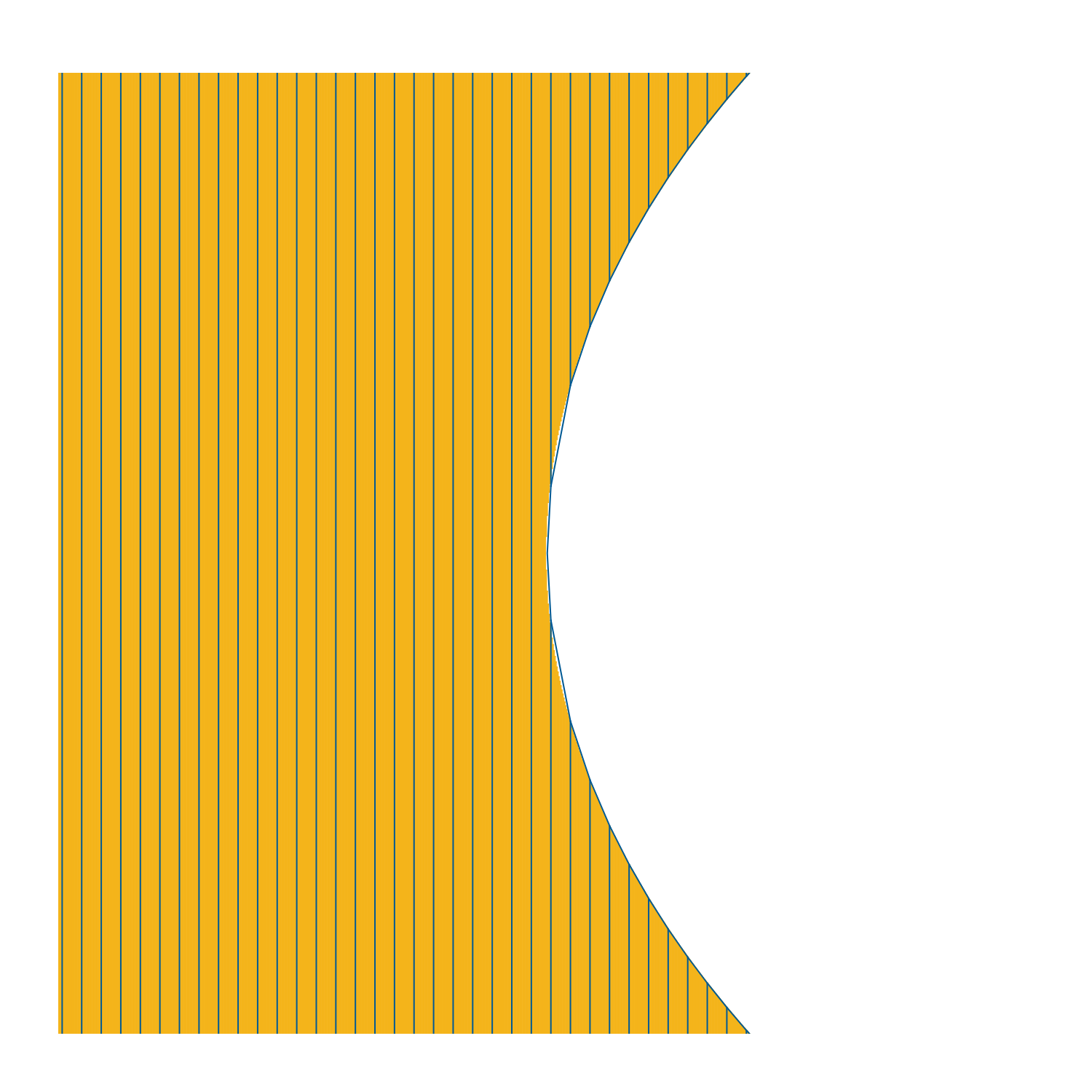}}
  \caption{The set-valued function $F$ and its approximations, zoomed in near the left PCT of the hole. Each approximation is represented by vertical blue lines, drawn on the graph of the  original function, which is colored in yellow.}
  \label{fig:example_C1}
\end{figure}
\begin{figure}[!ht]
    \centering
    \includegraphics[width=0.8\textwidth]{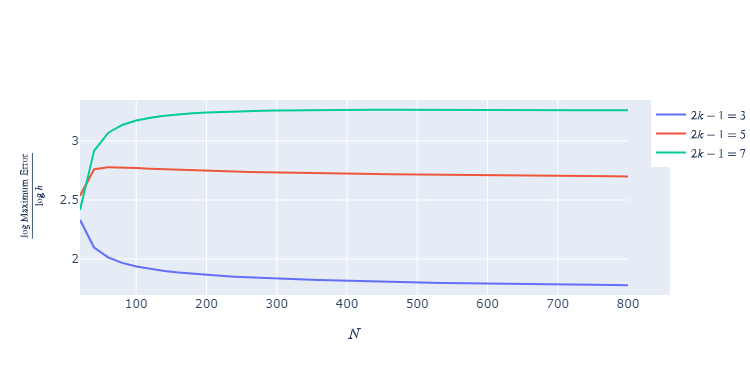}
    \caption{The rate of decay of the interpolation error as a function of the number of interpolation points ($N$) for the set-valued function $F$ and for three values of $k$, $s=3$ and $r=4$.}
    \label{fig:maximum_error_example_C1}
\end{figure}
\begin{figure}[!ht]
    \centering
    \includegraphics[width=0.8\textwidth]{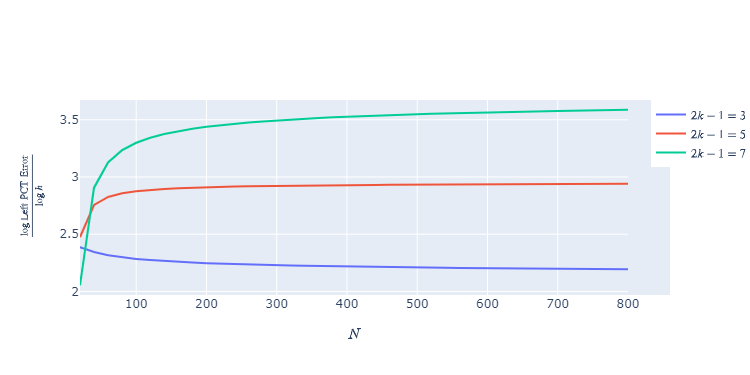}
    \caption{The rate of decay of the error in the location of the left PCT as a function of the number of interpolation points ($N$) for the set-valued function $F$ and for three values of $k$, $s=3$ and $r=4$.}
    \label{fig:pct_error_example_C1}
\end{figure}
Figure \ref{fig:example_C1} consists of four sub-figures. The first sub-figure shows the graph of the original function with the close-up view area near the left PCT bounded by a red rectangle. The last three sub-figures show a zoomed-in view of the approximations corresponding to a different number of samples. The approximant values are represented by vertical blue lines, drawn on the graph of the original function, which is colored in yellow.

We show the rate of decay of the approximation error for the above SVF in figure \ref{fig:maximum_error_example_C1}. The error is estimated by
$$
\text{Maximum Error}=\max_{j}{\Big\{d_{H}\big(F(\xi_{j}),\Tilde{F}(\xi_{j})\big)\Big\}},
$$
where $\{\xi_{j}\}_{j=1}^{400}$ is a set of equidistant points in $[a,b]$. 

Recall that the interpolating polynomial for approximating the PCT is of degree $2k-1$ (See the description of the algorithm). We plot 
$$
G_{k,r,s}=\frac{\log{(\text{Maximum Error}})}{\log{(\Delta)}},\qquad k=2,3,4,\qquad s=3,\qquad r=4,
$$
with $\Delta=\frac{b-a}{N-1}$, as a function of the number of the interpolation points $N$.

Finally, we show the rate of decay of the error of approximating the left PCT of the three above SVFs in figures \ref{fig:pct_error_example_C1}. We plot the value of 
$$
\tilde{E}_{k}=\frac{\log{(E_{k})}}{\log{(\Delta)}},\qquad k=2,3,4,
$$
as a function of the number of the interpolation points $N$. Here, $E_{k}$ is the error of the approximation of the left PCT defined in (\ref{eq:error_pct_approx_c}).

Figure \ref{fig:example_C1} demonstrates that the approximation error of the left PCT decreases as $N$ increases in accordance with the theory.
Figure \ref{fig:pct_error_example_C1} shows that the decay rate of the approximation of the left PCT improves as $k$ increases as predicted by the theoretical result (\ref{eq:error_estimate_pct_c}).

\clearpage

\appendix

\section*{Appendix A. \ \ 
Proof of Lemma~\ref{Lemma:sets}}

\noindent We state here the lemma for the convenience of the readers.

%\begin {lemma}
%\label{Lemma:sets}
Lemma~\ref{Lemma:sets}
Let $A_1, A_2, B_1,B_2$ be subsets of $\mathbb {R}^d$. Then
$$ d_H\big(A_1\cup A_2,B_1\cup B_2\big)\le \max\big\{d_H(A_1,B_1),d_H(A_2,B_2)\big\}.$$
%\end{lemma}
\begin{proof}
The proof follows directly from the definition of the Hausdorff distance between two sets in $\mathbb {R}^d$,
$$
d_H(C,D)=\max\{\sup_{c\in C}\ \inf_{d\in D}\|c-d\|,\ \ 
\sup_{d\in D}\ \inf_{c\in C}\|d-c\|\}.$$

It is easy to observe that for $a_i\in A_i,\ \ i=1,2$,
$$
\inf_{b\in B_{1}\cup B_2}\|a_i-b\|
\|\le\inf_{b_i\in B_i}
\|a_i-b_i\|,\ \ \ i=1,2.$$
Therefore
\begin{equation}\label{eq:H}
\alpha_i=\sup_{a_i\in A_i}{\ \inf_{b\in B_1\cup B_2}{\|a_i-b\|}}\le\sup_{a_i\in A_i}\ \inf_{b_i\in B_i}
\|a_i-b_i\|=\tilde{\alpha}_i,\ \ i=1,2, \end{equation}
and similarly
\begin{equation}\label{eq:G}
\beta_i=\sup_{b_i\in B_i}\ \inf_{a\in A_1\cup A_2}\|b_i-a\|\le\sup_{b_i\in B_i}\ \inf_{a_i\in A_i}
\|b_i-a_i\|=\tilde{\beta}_i,\ \ i=1,2. \end{equation}
Since  $d_H(A_i,B_i)=\max\{\tilde{\alpha}_i,\tilde{\beta}_i\}$, for $i=1,2$ we have
\begin{equation}
\label{eq:boundsonH,G}
\tilde{\alpha}_i\le d_H(A_i,B_i),\ \ \ \tilde{\beta}_i\le h_D(A_i,B_i),\   i=1,2.
\end{equation}

Now, 
$$
h_D(A_1\cup A_2, B_1\cup B_2)=\max\{\sup_{a\in A_1\cup A_2}\ \inf_{b\in B_1\cup B_2}\|a-b\|,
\sup_{b\in B_1\cup B_2}\ \inf_{a\in A_1\cup A_2}\|b-a\|\}\le\max\{\alpha_1,\beta_1,\alpha_2,\beta_2\}.
$$
By~\eqref{eq:H} and~ \eqref{eq:G} 
$\alpha_i\le \tilde{\alpha}_i$ and $\beta_i\le\tilde{\beta}_i\ $ for $i=1,2$. 
In view of \eqref{eq:boundsonH,G}, we arrive at the claim of the lemma
$$h_D(A1\cup A_2,B_1\cup B_2)\le\max\{h_D(A_1,B_1),h_D(A_2,B_2)\}.$$
\end{proof}

%\iffalse
\section*{Appendix B. \ \  A pseudo code for the algorithm of Section \ref{sec:computed_svf_interpolant}}
\label{app1}

\begin{algorithm}
    \SetAlgoLined
    \underline{\texttt{Interpolate-Set-Valued-Function}} $(S,X)$
    \SetKwInOut{Input}{Input}
    \SetKwInOut{Output}{Output}
    
    \Input{A set of interpolation points $X=\{x_{i}\}_{i=0}^{N}$ and the samples $S=\{F(x_{i})\}_{i=0}^{N}$ at these points, where for $i=0,\ldots,N$  $F(x_{i})=\bigcup_{j=0}^{M_i}[a^{[i]}_{2j},a^{[i]}_{2j+1}]$}
    \Output{The metric polynomial interpolant $\mathcal{P}^{M}_{X}F(x)$}
    //Creating discrete samples \\
    Create $\tilde{S}=\big\{V^{[i]}\big\}_{i=0}^{N}$ with $V^{[i]}=\bigg \{ \big\{a^{[i]}_{0},a^{[i]}_{1}\big\}, \ldots, \big\{a^{[i]}_{2M_{i}},a^{[i]}_{2M_{i}+1}\big\} \bigg \}$  \;
    // See algorithm \ref{alg:Find-_Significant_Metric_Chains}\\
    $C$ = \texttt{Find-Significant-Metric-Chains}($\tilde{S}$, $X$) \;
    $\mathcal{P} \gets$ Initialise a new list \;
    \ForEach{$chain$ in $C$}
    {\textbf{p} = \texttt{Compute-Polynomial-Interpolant}$(chain,X)$) \;
    Add \textbf{p} to the list $\mathcal{P}$ \;}
    // Creating the data structure of $\mathcal{P}^{M}_{X}F(x)$ (algorithm \ref{alg:Create_Set_Valued_Interpolant_Data_Structure})\\
    DS = \texttt{Create-Set-Valued-Interpolant-Data-Structure}$(\mathcal{P},\tilde{S}, X)$ \;
    return DS \; 
    \caption{The algorithm returns a data structure of the \textit{metric polynomial interpolant} of a set-valued function from a finite number of samples. It uses a known algorithm for computing interpolating polynomials for single-valued functions. For convenience we call it \texttt{Compute-Polynomial-Interpolant}.}
    
\end{algorithm}
\begin{algorithm}
    \SetAlgoLined
    \underline{\texttt{Find-Significant-Metric-Chains}} $(\Tilde{S},X)$
    \SetKwInOut{Input}{Input}
    \SetKwInOut{Output}{Output}

    \Input{A set of interpolation points $X=\{x_{i}\}_{i=0}^{N}$ and the discrete samples $\tilde{S}=\{V^{[i]}\}_{i=0}^{N}$ derived from $S=\{F(x_{i})\}_{i=0}^{N}$}
    \Output{A set of all significant metric chains $C=\{c_{k}\}_{k=1}^{L}$}
    $T\gets$ Initialise a tree with an empty root \;
    Convert all elements in each $V^{[i]}$ to nodes \;
    //Each node contains a real value\\
    Connect all elements of $V^{[0]}$ to the root of $T$ \;
    \For{$i\gets0$ to $N-1$}{
    // See algorithm \ref{alg:Find_And_Connect_Metric_Pairs}\\
    \texttt{Find-And-Connect-Metric-Pairs}$(V^{[i]},  V^{[i+1]})$ \;}
    // See algorithm \ref{alg:Connect_PCT_Nodes_To_The_Root}\\
    \texttt{Connect-PCT-Nodes-To-The-Root}$(\tilde{S},X,T)$ \;
    $C$ = \texttt{Pre-Order-Traverse-Tree}($T$) \;
    Delete the first element in all the lists in $C$ \;
    Return C \;
    \caption{The algorithm finds all \textit{significant metric chains} from a finite number of discrete samples taken from a set-valued function. It uses a tree data structure and a known algorithm \texttt{Pre-Order-Traverse-Tree}, which generates a list of all paths of the tree. Each path represents a significant metric chain.}
    \label{alg:Find-_Significant_Metric_Chains}
\end{algorithm}
\begin{algorithm}
    \underline{\texttt{Find-And-Connect-Metric-Pairs}} $(L, R)$\;
    \SetKwInOut{Input}{Input}
    \SetKwInOut{Output}{Output}

    \Input{Two lists $L$ and $R$ of lists of nodes. Each node contains a real value}
   
    \ForEach{$\mathcal{N}$ in $L$}{
    \ForEach{$p$ in $\mathcal{N}$}{
    //Check if the node $p$ is already connected\\
    \If{$p$ has at least one child}{Jump to the next node \;}
    \If{$p$ is the minimal point of $\mathcal{N}$ and $\mathcal{N}$ is not the last set in $L$}
    {
        $\mathcal{M}\gets$ next set in $L$ \;
        $q\gets$ is the maximal point in $\mathcal{M}$ \;
        \texttt{Find-if-PCT-Can-Be-Added-and-Connect-it}($p$,$q$,$R$,$\uparrow$) \;
    }
    \ElseIf{$p$ is an interior point of $\mathcal{N}$}{
       \ForEach{$\mathcal{O}$ in R}{\If{$\min{\mathcal{O}} \leq p \leq \max{\mathcal{O}}$}{
        Create a new node $m$ with the same value as $p$ \;
        Add the node $m$ to the list $\mathcal{O}$ \;
        Connect the node $m$ to $p$ as a child \;
        Stop the loop \;
        }}
    }
    \If{$p$ has no children nodes}
    {$MP$=\texttt{Find-Closet-Nodes-in-Sample-to-Node}$(p,R)$ \;
    \ForEach{$q$ in $MP$}{
    Connect the node $q$ to $p$ as a child \;
    }}}}

    \ForEach{$\mathcal{N}$ in $R$}{
    \ForEach{$p$ in $\mathcal{N}$}{
    \If{$p$ has at least one ancestors}{Jump to the next iteration \;}
    \If{$p$ is the minimal point of $\mathcal{N}$ and $\mathcal{N}$ is not the last set in $R$}
    {
        $\mathcal{M}\gets$ next set in $R$ \;
        $q\gets$ is the maximal point in $\mathcal{M}$ \;
        \texttt{Find-if-PCT-Can-Be-Added-and-Connect-it}($p$,$q$,$L$,$\downarrow$) \;
    }
    \If{$p$ has no ancestors nodes}
    {$MP$=\texttt{Find-Closet-Nodes-in-Sample-to-Node}$(p,L)$ \;
    \ForEach{$q$ in $MP$}{
    Connect the node $q$ to $p$ as an ancestor \;
    }}}}
    \caption{The algorithm finds and connects all metric pairs between two given discrete samples. Note that the node and its value are denoted by the same notation.}
    \label{alg:Find_And_Connect_Metric_Pairs}
    
\end{algorithm}
\begin{algorithm}
    \underline{\texttt{Find-if-PCT-Can-Be-Added-and-Connect-it}} $(p,q,K,t)$\;
    \SetKwInOut{Input}{Input}
    \SetKwInOut{Output}{Output}

    \Input{A discrete sample $K$, two points $p$ and $q$ and a connection type $t$: $\uparrow$ or $\downarrow$}
    \ForEach{$\mathcal{O}$ in K}{\If{$\min{\mathcal{O}} \leq \frac{p+q}{2} \leq \max{\mathcal{O}}$}{
        Create a new node $m$ with the value of $\frac{p+q}{2}$ \;
        Add the node $m$ to the list $\mathcal{O}$ \;
        \eIf{$t$ is $\downarrow$}
        {Connect the nodes $p$ and $q$ to $m$ as children}
        {Connect the nodes $p$ and $q$ to $m$ as ancestors}
        }}
    \caption{Find whether a point of change of topology can be added to an interval in a given discrete sample, and then connect it}
\end{algorithm}
\begin{algorithm}
    \underline{\texttt{Find-Closet-Nodes-in-Sample-to-Node}} $(p,K)$\;
    \SetKwInOut{Input}{Input}
    \SetKwInOut{Output}{Output}

    \Input{A discrete sample $K$ and a node $p$}
    \Output{All nodes which are closest to $p$ from $K$}
    \ForEach{$\mathcal{S}$ in $K$}{
    \If{$\min{(\mathcal{S})}\leq p\leq \max{(\mathcal{S})}$}
    {
     \eIf{$\mathcal{S}$ contains a point equals to $p$}
     {return $p$ \;}
     {return $\emptyset$ \;}}
     }
     return arg $\min_{q\in K}{(|p-q|)}$ \;
    \caption{Find all nodes, in a given sample, whose values are the closest a to a value of a given node.}
    \label{alg:Find_if_PCT_Can_Be_Added_and_Connect_it}
\end{algorithm}
\begin{algorithm}
    \underline{\texttt{Connect-PCT-Nodes-To-The-Root}} $(\tilde{S},T)$\;
    \SetKwInOut{Input}{Input}
    \SetKwInOut{Output}{Output}

    \Input{The set of discrete samples $\tilde{S}=\{V^{[i]}\}_{i=0}^{N}$ and the Tree $T$}
   
    \For{$i\gets N$ to $0$}{
    \ForEach{$\mathcal{I}$ in $V^{[i]}$}{
    \ForEach{$p$ in $\mathcal{I}$}{
    \If{$p$ has no ancestors}{
    \eIf{$i$=0}{Connect $p$ to the root of the tree $T$ \;}{
    \ForEach{$\mathcal{O}$ in $V^{[i-1]}$}
    {\If{$\min{\mathcal{O}} \leq p \leq \max{\mathcal{O}}$}{
        Create a new node $m_{i-1}$ with the value of $p$ \;
        Add the node $m_{i-1}$ to the list $\mathcal{O}$ \;
        Connect the node $m_{i-1}$ to $p$ as an ancestor \;
        Go to the next $p$ \;
        }}
    }}}}}
    \caption{Connect all PCT nodes to the previous samples up to the root}
    \label{alg:Connect_PCT_Nodes_To_The_Root}
\end{algorithm}
\begin{algorithm}
    \SetAlgoLined
    \underline{\texttt{Create-Set-Valued-Interpolant-Data-Structure}} $(\mathcal{P},\tilde{S},X)$\;
    \SetKwInOut{Input}{Input}
    \SetKwInOut{Output}{Output}
    
    \Input{A set of interpolation points $X=\{x_{i}\}_{i=0}^{N}$ , the discrete samples $\tilde{S}=\{V^{[i]}\}_{i=0}^{N}$ at $X$ and a set of real-valued polynomials $\mathcal{P}$ interpolating the significant chains $C$ at $X$}
    \Output{A data structure that represents the set-valued interpolant $\Tilde{F}$}
    $\{\tilde{c}_i\}_{j=1}^{M},\{\tilde{d}_i\}_{j=1}^{M}\gets$ Extract-Approximated-PCTs-From-Discrete-Samples($\tilde{S}$)\;\\
    $\big\{\tilde{u},\tilde{\ell}\big\}\bigcup\big\{\tilde{g}_{j}\big\}_{j=1}^{M}\bigcup\big\{\tilde{h}_{j}\big\}_{j=1}^{M}\gets$ Identify-Boundary-Functions($\mathcal{P}$, $\{\tilde{c}_i\}_{j=1}^{M}$, $\{\tilde{d}_i\}_{j=1}^{M}$)\;\\
    For $x\in [a,b]$ return $\tilde{F}(x)$ as in $\ref{subsub1}$
    \caption{The algorithm returns a data structure $DS$ that represents a set valued function interpolating a set of samples at $X$.}
    \label{alg:Create_Set_Valued_Interpolant_Data_Structure}
\end{algorithm}
%\fi
%\newpage
%%%%%%%%%%%%%%%%%%%%%%%%%%%%%%%%%%%%%%%%%%%%%%%%%
\end{document}